\def\C{{\mathbb C}}
\def\R{{\mathbb R}}
\def\N{{\mathbb N}}
\def\le{\leqslant}
\def\ge{\geqslant}
\newcommand{\eps}{\varepsilon}
\newcommand{\e}{\varepsilon}
\DeclareMathOperator{\diver}{div}
\theoremstyle{plain}
\newtheorem{theorem}{Theorem}[section]
\newtheorem{lemma}[theorem]{Lemma}
\newtheorem{proposition}[theorem]{Proposition}
\theoremstyle{definition}
\newtheorem{remark}[theorem]{Remark}
\newtheorem*{remark*}{Remark}
\numberwithin{equation}{section}
\begin{document}

\title[Classical limit of a TDSCF system]
{On the classical limit of a time-dependent self-consistent field system: analysis and computation}
\author[S. Jin]{Shi Jin}
\author[C. Sparber]{Christof Sparber}
\author[Z. Zhou]{Zhennan Zhou}

\address[S. Jin]{Department of Mathematics, Institute of Natural Sciences and MOE Key Lab in Scientific and Engineering 
Computing, Shanghai Jiao Tong University, 800 Dong Chuan Road, Shanghai 200240, China \& Department of Mathematics, University of Wisconsin-Madison, 480 Lincoln Drive, Madison, WI 53706, USA.}
\email{jin@math.wisc.edu}
\address[Z. Zhou]{Department of Mathematics, University of Wisconsin-Madison, 480 Lincoln Drive, Madison, WI 53706, USA.}
\email{zhou@math.wisc.edu}
\address[C.~Sparber]
{Department of Mathematics, Statistics, and Computer Science, M/C 249, University of Illinois at Chicago, 851 S. Morgan Street, Chicago, IL 60607, USA.}
\email{sparber@math.uic.edu}

\begin{abstract}
We consider a coupled system of Schr\"odinger equations, arising in quantum mechanics via the so-called time-dependent self-consistent field method. 
Using Wigner transformation techniques we study the corresponding classical limit dynamics in two cases. In the first case, the classical limit is only taken 
in one of the two equations, leading to a mixed quantum-classical model which is closely connected to the well-known Ehrenfest method in molecular dynamics. 
In the second case, the classical limit of the full system is rigorously established, resulting in a system of coupled Vlasov-type equations. 
In the second part of our work, we provide a numerical study of the coupled semi-classically scaled Schr\"odinger equations and of the mixed quantum-classical 
model obtained via Ehrenfest's method. 
A second order (in time) method is introduced for each case. We show that the proposed methods allow time steps independent of the 
semi-classical parameter(s) while still capturing the correct behavior of physical observables. It also becomes clear that the order of accuracy of our methods 
can be improved in a straightforward way.
\end{abstract}

\date{\today}

\subjclass[2000]{35Q41, 35B40, 65M70, 35Q83}
\keywords{Quantum mechanics, classical limit, self-consistent field method, Wigner transform, Ehrenfest method, time-splitting algorithm}

\thanks{This work was partially supported by NSF grants DMS-1114546 and DMS-1107291: NSF Research
Network in Mathematical Sciences KI-Net: Kinetic description of emerging challenges in multiscale problems
of natural sciences. C.S. acknowledges support by the NSF through grant numbers DMS-1161580 and DMS-1348092}

\maketitle


\section{Introduction}\label{sec:intro}

The numerical simulation of many chemical, physical, and biochemical phenomena requires the direct simulation of dynamical processes within  
large systems involving quantum mechanical effects. However, if the entire system is treated quantum mechanically, the numerical simulations are 
often restricted to relatively small model problems on short time scales due to the formidable computational cost. In order to overcome this difficulty, 
a basic idea is to separate the involved degrees of freedom into two different categories: one, which involves variables that behave effectively classical (i.e., 
evolving on slow time- and large spatial scales) and one which encapsulates the (fast) quantum mechanical dynamics within a certain portion of the full system. 
For example, for a system consisting of many molecules, one might designate the electrons as the fast degrees of freedom and the atomic nuclei as the slow
degrees of freedom. 

Whereas separation of the whole system into a classical part and a quantum mechanical part is certainly not an easy task, it is, by now, widely studied 
in the physics literature and often leads to what is called {\it time-dependent self-consistent field equations} (TDSCF), see, e.g., \cite{BNS, Dr, H, ZNN, SuMi} 
and the references therein. In the TDSCF method, one typically assumes that the total wave function of the system $\Psi(t, X)$, with $X=(x,y)$, 
can be approximated by 
\[
\Psi(X, t) \approx \psi(x,t) \varphi(y,t), 
\]
where $x$ and $y$ denote the degrees of freedom within a certain subsystem, only. The precise nature of this approximation thereby strongly depends on 
the concrete problem at hand. Disregarding this issue for the moment, one might then, in a second step, hope to derive a self-consistently coupled system for $\psi$ and $\varphi$ and 
approximate it, at least partially, by the associated classical dynamics.

In this article we will study a simple model problem for such a TDSCF system, motivated by \cite{Dr}, but 
one expects that our findings extend to other self-consistent models as well.
We will be interested in deriving various (semi-)classical approximations to 
the considered TDSCF system, resulting in either a mixed quantum-classical model, or a fully classical model. 
As we shall see, this also gives a rigorous justification of what is known as the {\it Ehrenfest method} in the physics literature, cf. \cite{BNS, Dr}.
To this end, we shall be heavily relying on {\it Wigner transformation} 
techniques, developed in \cite{GMMP, LiPa}, which have been proved to be superior in many aspects to the more classical WKB approximations, 
see, e.g. \cite{SMM} for a broader discussion. One should note that the use of Wigner methods to study the classical limit of nonlinear (self-consistent) quantum mechanical models 
is not straightforward and usually requires additional assumptions on the quantum state, cf. \cite{MaMa, LiPa}. It turns out that in our case we can get by without them.

In the second part of this article we shall then be interested in designing an efficient and accurate numerical method which 
allows us to pass to the classical limit in the TDSCF system within our numerical algorithm. We will be particularly interested in the  
meshing strategy required to accurately approximate the wave functions, or to capture the correct physical observables (which are quadratic quantities of the wave function). 
To this end, we propose a second order (in time) method based on an operator splitting and a spectral approximation of the TDSCF equations as well as the obtained Ehrenfest model. 
These types of methods have been proven to be very effective in earlier numerical studies, see \cite{NLS, Kl, TS, SL-TS} for previous results and \cite{JMS} 
for a review of the current state-of-the art of numerical methods for semi-classical Schr\"odinger type models. In comparison to 
the case of a single (semi-classical) nonlinear Schr\"odinger equation with power law nonlinearities, where on has to use time steps 
which are comparable to the  size of the small semi-classical parameter (see \cite{NLS}), it turns out that in our case, despite of the nonlinearity, 
we can rigorously justify that one can take time steps {\it independent} of the semi-classical parameter and still capture the correct classical limit of physical observables. 

The rest of this paper is now organized as follows: In Section \ref{sec:prelim}, we present the considered TDSCF system and discuss some of its basic mathematical properties, 
which will be used later on. In Section \ref{sec:Wigner}, a brief introduction to the Wigner transforms and Wigner measures is given.
In Section \ref{sec:semi} we study the semi-classical limit, resulting in a mixed quantum-classical limit system. In Section \ref{sec:classical} the completely classical approximation 
of the TDSFC system is studied by means of two different limiting processes, both of which result in the same classical model.
The numerical methods used for the TDSCF equations and the Ehrenfest equations are then introduced in Section \ref{sec:spectral}. Finally, 
we study several numerical tests cases in Section \ref{sec:tests} in order to verify the properties of our methods.


\section{The TDSCF system}\label{sec:prelim} 

\subsection{Basic set-up and properties} In the following, we take $x\in \R^d$, $y\in \R^n$, with $d,n\in \N$, and 
denote by $\langle \cdot, \cdot\rangle_{L_x^{2}}$ and $\langle \cdot, \cdot\rangle_{L_y^{2}}$ the usual inner product in 
$L^2(\R^d_x)$ and $L^2(\R^n_y)$, respectively, i.e. 
\[
\left\langle f, g \right\rangle_{L_z^{2}} \equiv \int_{\R^m} \bar{f}(z)g(z)dz.
\]
The total Hamiltonian of the system acting on $L^2(\R^{d+n})$ is assumed to be of the form
\begin{equation}\label{eq:ham}
H= - \frac{\delta^2}{2} \Delta_x - \frac{\eps^2}{2} \Delta_y + V(x,y) ,  
\end{equation}
where $V(x,y)\in \R$ is some (time-independent) real-valued potential. Typically, one has
\begin{equation}\label{eq:pot}
V(x,y) = V_1(x)+ V_2(y) + W(x,y),
\end{equation} where $V_{1,2}$ are external potentials acting only on the respective subsystem 
and $W$ represents an internal coupling potential in between the two subsystems. From now on, we shall assume that 
$V$ is smooth and rapidly decaying at infinity, i.e.
\begin{equation}\label{eq:hyp}
V\in \mathcal S(\R^d_x\times \R^n_y).
\end{equation}
In particular, we may, without restriction of generality, assume $V(x,y)\ge 0$. 

\begin{remark} The proofs given below will show that, with some effort, all of our results can be extended to the case of 
potentials $V\in C^2_{\rm b}(\R^d_x\times \R^n_y)$.
\end{remark}

In \eqref{eq:ham}, the Hamiltonian is already written in dimensionless form, such that only two (small) parameters $\eps, \delta >0$ remain. 
In the following, they play the role of dimensionless Planck's constants. Dependence with respect to these parameters will be denoted by 
superscripts. The TDSCF system at hand is then (formally given by \cite{Dr}) the following system of self-consistently coupled Schr\"odinger equations 
\begin{equation}\label{eq:TDSCF}
\left\{ \begin{split}
i \delta \partial_t \psi^{\eps, \delta}  = \left (-\frac{\delta^2}{2}\Delta_x  + \langle \varphi^{\eps, \delta},V\varphi^{\eps, \delta} \rangle_{L_y^{2}} \right) \psi^{\eps, \delta} \, ,
\quad \psi^{\eps, \delta} _{\mid t=0} = \psi^\delta_{\rm in}(x),\\
i\varepsilon \partial_t \varphi^{\eps, \delta}  = \left(-\frac{\varepsilon^2}{2}\Delta_y  + \langle \psi^{\eps, \delta},h^\delta \psi^{\eps, \delta} \rangle_{L_x^{2}}\right) \varphi^{\eps, \delta}  \, ,
\quad \varphi^{\eps, \delta}_{\mid t=0} = \varphi^\eps_{\rm in}(y),
\end{split}\right.
\end{equation}
where we denote by
\begin{equation}\label{eq:subham}
h^\delta=-\frac{\delta^2}{2}\Delta_x+V(x,y),
\end{equation}
the Hamiltonian of the subsystem represented by the $x$-variables (considered as the purely quantum mechanical variables) and in which $y$ only enters as a parameter. 
For simplicity, we assume that at $t=0$ the data $ \psi_{\rm in}^{\delta}$ only depends on $\delta$, and that 
$ \varphi_{\rm in}^{\eps}$ only depends on $\eps$, which 
means that the simultaneous dependence on both parameters is only induced by the time-evolution.
Finally, the coupling terms are explicitly given by
\[
 \langle \varphi^{\eps, \delta},V\varphi^{\eps, \delta} \rangle_{L_y^{2}} = \int_{\R_y^n} V(x,y) |\varphi^{\eps, \delta}(y,t)|^2 \, dy =: \Upsilon^{\eps, \delta}(x,t) ,
\]
and after formally integrating by parts
\[
\langle \psi^{\eps, \delta},h^\delta \psi^{\eps, \delta} \rangle_{L_x^{2}} =  \int_{\R_x^d} \frac{\delta^2}{2} | \nabla \psi^{\eps, \delta}(x,t)|^2 + V(x,y) |\psi^{\eps, \delta}(x,t)|^2 \, dx =:\Lambda^{\eps, \delta}(y,t).
\]
Throughout this work we will always interpret the term $ \langle \psi^{\eps, \delta},h^\delta \psi^{\eps, \delta} \rangle_{L_x^{2}}$ as above, i.e., in the weak sense. 
Both $\Upsilon^{\eps, \delta}$ and $\Lambda^{\eps, \delta}$ are time-dependent, real-valued potentials, computed self-consistently via the dynamics of $\varphi^{\eps, \delta}$ and 
$\psi^{\eps, \delta}$, respectively. Note that 
\begin{equation}\label{eq:Lambda}
\Lambda^{\eps, \delta}(y,t)= \frac{\delta^2}{2} \| \nabla  \psi^{\eps, \delta} \|^2_{L^2_x}+ \langle \psi^{\eps, \delta},V\psi^{\eps, \delta} \rangle_{L_x^{2}} 
\equiv \vartheta^{\eps, \delta}(t) + \langle \psi^{\eps, \delta},V\psi^{\eps, \delta} \rangle_{L_x^{2}}.
\end{equation}
Here, the purely time-dependent part $ \vartheta^{\eps, \delta}(t)$, could in principle be absorbed into the definition of 
$\varphi^{\eps, \delta}$ via a Gauge transformation, i.e.,
\begin{equation}\label{eq:gauge}
\varphi^{\eps, \delta} (x,t) \mapsto \widetilde\varphi^{\eps, \delta} (x,t):= {\varphi^{\eps, \delta}}(x,t) \exp\left(-\frac{i}{\eps} \int_0^t\vartheta^{\eps, \delta}(s)\, ds\right),
\end{equation}
For the sake of simplicity, we shall refrain from doing so, but this nevertheless shows that the two coupling terms are in essence of the same form. 
Also note that this Gauge transform leaves any $H^s(\R^d)$-norm of $\varphi^{\eps, \delta}$ invariant (but clearly depends on the solution of the second equation within the TDSCF system).

\begin{remark} 
For potentials of the form \eqref{eq:pot}, one can check that in the case where $W(x,y)\equiv 0$, i.e., no coupling term, one can use 
similar gauge transformations to completely decouple the two equations in \eqref{eq:TDSCF} and obtain two linear Schr\"odinger equations in 
$x$ and $y$, respectively.
\end{remark}

An important physical quantity is the total masses of the system, 
\begin{equation}\label{mass}
M^{\eps, \delta}(t): = \| \psi^{\eps, \delta}(\cdot, t)\|^2_{L_x^{2}} +  \| \varphi^{\eps, \delta}(\cdot, t)\|^2_{L_y^{2}}\equiv m_1^{\eps, \delta}(t)+m_2^{\eps, \delta}(t).
\end{equation}
where $m_1^{\eps, \delta}$, $m_2^{\eps, \delta}$ denote the masses of the respective subsystem. One can then prove that these are 
conserved by the time-evolution of \eqref{eq:TDSCF}. 
\begin{lemma}\label{lem:mass}
Assume that $\psi^{\eps, \delta}\in C(\R_t; H^1(\R^d_x))$ and $\varphi^{\eps, \delta}\in C(\R_t; L^2(\R^n_y))$ solve \eqref{eq:TDSCF}, then
\[
m_1^{\eps, \delta}(t) = m_1^{\eps, \delta}(0), \quad m_2^{\eps, \delta}(t) = m_2^{\eps, \delta}(0), \quad \forall\, t\in \R.
\]
\end{lemma}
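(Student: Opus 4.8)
The plan is to observe that, along any solution with the stated regularity, both coupling potentials $\Upsilon^{\eps,\delta}(\cdot,t)$ and $\Lambda^{\eps,\delta}(\cdot,t)$ are \emph{real-valued} and (locally in time) bounded, so that each of the two equations in \eqref{eq:TDSCF} is a linear Schr\"odinger equation driven by a self-adjoint generator, and then to conclude by the usual $L^2$ energy identity. First I would record the elementary bounds: since $V\in\mathcal S(\R^d_x\times\R^n_y)\subset L^\infty$,
\[
0\le\Upsilon^{\eps,\delta}(x,t)=\int_{\R^n_y}V(x,y)|\varphi^{\eps,\delta}(y,t)|^2\,dy\le\|V\|_{L^\infty}\,m_2^{\eps,\delta}(t),
\]
while, by \eqref{eq:Lambda},
\[
\Lambda^{\eps,\delta}(y,t)=\vartheta^{\eps,\delta}(t)+\int_{\R^d_x}V(x,y)|\psi^{\eps,\delta}(x,t)|^2\,dx,\qquad 0\le\langle\psi^{\eps,\delta},V\psi^{\eps,\delta}\rangle_{L^2_x}\le\|V\|_{L^\infty}\,m_1^{\eps,\delta}(t),
\]
and the purely time-dependent term $\vartheta^{\eps,\delta}(t)=\tfrac{\delta^2}{2}\|\nabla\psi^{\eps,\delta}(\cdot,t)\|_{L^2_x}^2$ is finite because $\psi^{\eps,\delta}\in C(\R_t;H^1(\R^d_x))$. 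In particular both potentials are real multiplication operators, uniformly bounded on compact time intervals once $m_1^{\eps,\delta},m_2^{\eps,\delta}$ are known to stay finite (which is part of what we prove, so a short continuity/bootstrap argument closes the loop).

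Next, for the first equation I would pair $i\delta\,\partial_t\psi^{\eps,\delta}=\big(-\tfrac{\delta^2}{2}\Delta_x+\Upsilon^{\eps,\delta}\big)\psi^{\eps,\delta}$ with $\psi^{\eps,\delta}$ in the $H^{-1}_x$--$H^1_x$ duality and take real parts:
\[
\frac{d}{dt}\,m_1^{\eps,\delta}(t)=2\,\re\big\langle\psi^{\eps,\delta},\partial_t\psi^{\eps,\delta}\big\rangle=\frac{2}{\delta}\,\re\!\left(\frac{1}{i}\Big[\tfrac{\delta^2}{2}\|\nabla\psi^{\eps,\delta}\|_{L^2_x}^2+\langle\psi^{\eps,\delta},\Upsilon^{\eps,\delta}\psi^{\eps,\delta}\rangle\Big]\right)=0,
\]
since the bracketed quantity is real (self-adjointness of $-\Delta_x$ and reality of $\Upsilon^{\eps,\delta}$). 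The identical computation for $i\eps\,\partial_t\varphi^{\eps,\delta}=\big(-\tfrac{\eps^2}{2}\Delta_y+\Lambda^{\eps,\delta}\big)\varphi^{\eps,\delta}$ gives $\tfrac{d}{dt}m_2^{\eps,\delta}(t)=0$; in particular the spatially constant term $\vartheta^{\eps,\delta}(t)$ in $\Lambda^{\eps,\delta}$, being real, does not affect $m_2^{\eps,\delta}$, consistently with the Gauge transformation \eqref{eq:gauge}. Integrating in time yields the claim.

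The only nontrivial point — and the one I expect to require the most care, though it is entirely standard — is the rigorous justification of the differentiation $\tfrac{d}{dt}\|\psi^{\eps,\delta}(t)\|_{L^2_x}^2=2\,\re\langle\partial_t\psi^{\eps,\delta},\psi^{\eps,\delta}\rangle$ at the regularity level assumed in the statement. For the first equation this works cleanly: with $\psi^{\eps,\delta}\in C(\R_t;H^1_x)$ and $\Upsilon^{\eps,\delta}$ bounded one gets $\partial_t\psi^{\eps,\delta}\in C(\R_t;H^{-1}_x)$ from the equation, and one invokes the classical lemma on the time derivative of $\|u(t)\|_{L^2}^2$ for $u\in L^2_{\mathrm{loc}}(\R;H^1)$ with $u'\in L^2_{\mathrm{loc}}(\R;H^{-1})$. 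For the second equation the natural argument is slightly different, since $\varphi^{\eps,\delta}$ is only assumed in $C(\R_t;L^2_y)$: here one uses that $\Lambda^{\eps,\delta}(\cdot,t)\in L^\infty(\R^n_y)$ is real for every fixed $t$, so the two-parameter propagator generated by $-\tfrac{\eps^2}{2}\Delta_y+\Lambda^{\eps,\delta}(\cdot,t)$ is unitary on $L^2_y$ (Dyson series, or self-adjointness of the generator at each time), whence $m_2^{\eps,\delta}$ is automatically conserved. Alternatively, in both cases one may regularize the initial data, run the formal computation for the resulting smooth solutions, and pass to the limit using continuous dependence.
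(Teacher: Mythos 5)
Your proposal is correct and follows essentially the same route as the paper: multiply (pair) each equation with the conjugate wave function, take real parts using the reality of the self-consistent potentials $\Upsilon^{\eps,\delta}$ and $\Lambda^{\eps,\delta}$ and the self-adjointness of the Laplacian, and then extend from smooth solutions to the stated regularity by a density/regularization argument, exactly as the paper does. Your additional remarks on the $H^1$--$H^{-1}$ duality and the unitarity of the time-dependent propagator are fine alternatives for the rigorous justification, but they do not change the substance of the argument (and the worry about finiteness of $m_1^{\eps,\delta},m_2^{\eps,\delta}$ is moot, since the assumed continuity in $H^1_x$ and $L^2_y$ already gives it).
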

\begin{proof}
Assuming for the moment, that both $\psi^{\eps, \delta}$ and $\varphi^{\eps, \delta}$ are sufficiently smooth and decaying, we 
multiply the first equation in \eqref{eq:TDSCF} with $\overline {\psi^{\eps, \delta}}$ and formally integrate with respect to $x\in \R^{d}_x$. Taking the real part of the resulting 
expression and having in mind that $\Upsilon^{\eps, \delta}(y,t)\in \R$, yields
\[
\frac{d}{dt} m_1^{\eps, \delta}(t) \equiv \frac{d}{dt} \| \psi^{\eps, \delta}(\cdot, t)\|_{L^2_x}^2 = 0,
\]
which, after another integration in time, is the desired result for $m_1^{\eps, \delta}(t)$. By the same argument one can show the result for $m_2^{\eps, \delta}(t)$. Integration in time 
in combination with a density argument then allows to 
extend the result to more general solutions in $H^1$ and $L^2$, respectively.
\end{proof}

We shall, from now on assume that the initial data is normalized such that $m_1^{\eps, \delta}(0)=m_2^{\eps, \delta}(0)=1$. Using this normalization, the total energy 
of the system can be written as
\begin{equation}\label{energy}
\begin{split}
E^{\eps, \delta}(t) :=  &\, \frac{\delta^2}{2} \| \nabla  \psi^{\eps, \delta}(\cdot, t) \|^2_{L^2_x} + \frac{\eps^2}{2} \| \nabla  \varphi^{\eps, \delta}(\cdot, t) \|^2_{L^2_y} \\
&\,  + 
\iint_{\R^{d+n} } V(x,y)   |\psi^{\eps, \delta}(x,t)|^2 |\varphi^{\eps, \delta}(y,t)|^2\, dx\, dy.
\end{split}
\end{equation}
\begin{lemma}\label{lem:energy}
Assume that $\psi^{\eps, \delta}\in C(\R_t; H^1(\R^d_x))$ and $\varphi^{\eps, \delta}\in C(\R_t; H^1(\R^n_y))$ solve \eqref{eq:TDSCF}, then
\[
E^{\eps, \delta}(t) = E^{\eps, \delta}(0), \quad \forall\, t\in \R.
\]
\end{lemma}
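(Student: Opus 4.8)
The plan is to prove $\frac{d}{dt}E^{\eps,\delta}(t)=0$ by differentiating the three contributions to \eqref{energy} separately and exhibiting a cancellation. Exactly as in the proof of Lemma~\ref{lem:mass}, I would first carry out every computation assuming that $\psi^{\eps,\delta}$ and $\varphi^{\eps,\delta}$ are as smooth and rapidly decaying as needed (e.g.\ taking initial data in $\mathcal S(\R^d)\times\mathcal S(\R^n)$), so that all differentiations under the integral sign and all integrations by parts are legitimate; the general $H^1$-case is recovered at the very end by a density argument. The structural observation driving everything is that \emph{each} of the two equations in \eqref{eq:TDSCF} is a \emph{linear} Schr\"odinger equation driven by a \emph{real-valued}, purely time-dependent potential — namely $\Upsilon^{\eps,\delta}(x,t)$ in the first equation and $\Lambda^{\eps,\delta}(y,t)$ in the second.

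For such an equation one has the standard energy balance obtained by testing against the conjugate of $\partial_t$ of the solution and taking real parts. Concretely, from $i\delta\partial_t\psi^{\eps,\delta}=-\tfrac{\delta^2}{2}\Delta_x\psi^{\eps,\delta}+\Upsilon^{\eps,\delta}\psi^{\eps,\delta}$ with $\Upsilon^{\eps,\delta}$ real,
\[
\frac{d}{dt}\,\frac{\delta^2}{2}\|\nabla_x\psi^{\eps,\delta}\|_{L^2_x}^2=-\int_{\R^d}\Upsilon^{\eps,\delta}(x,t)\,\partial_t|\psi^{\eps,\delta}(x,t)|^2\,dx,
\]
and symmetrically $\frac{d}{dt}\tfrac{\eps^2}{2}\|\nabla_y\varphi^{\eps,\delta}\|_{L^2_y}^2=-\int_{\R^n}\Lambda^{\eps,\delta}\,\partial_t|\varphi^{\eps,\delta}|^2\,dy$; rigorously the right-hand sides are interpreted through the continuity equations $\partial_t|\psi^{\eps,\delta}|^2=-\delta\,\diver_x(\im(\overline{\psi^{\eps,\delta}}\nabla_x\psi^{\eps,\delta}))$, $\partial_t|\varphi^{\eps,\delta}|^2=-\eps\,\diver_y(\im(\overline{\varphi^{\eps,\delta}}\nabla_y\varphi^{\eps,\delta}))$ and an integration by parts, which is allowed since $\nabla V\in\mathcal S$ keeps $\nabla_x\Upsilon^{\eps,\delta}(\cdot,t)$ and $\nabla_y\Lambda^{\eps,\delta}(\cdot,t)$ bounded (using also $m_1^{\eps,\delta}\equiv m_2^{\eps,\delta}\equiv1$ from Lemma~\ref{lem:mass}). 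For the interaction term, differentiating under the integral and using Fubini together with the definitions of $\Upsilon^{\eps,\delta}$ and of $\langle\psi^{\eps,\delta},V\psi^{\eps,\delta}\rangle_{L^2_x}$ gives
\[
\frac{d}{dt}\iint_{\R^{d+n}}V|\psi^{\eps,\delta}|^2|\varphi^{\eps,\delta}|^2=\int_{\R^d}\Upsilon^{\eps,\delta}\,\partial_t|\psi^{\eps,\delta}|^2\,dx+\int_{\R^n}\langle\psi^{\eps,\delta},V\psi^{\eps,\delta}\rangle_{L^2_x}\,\partial_t|\varphi^{\eps,\delta}|^2\,dy.
\]
By \eqref{eq:Lambda} one has $\langle\psi^{\eps,\delta},V\psi^{\eps,\delta}\rangle_{L^2_x}=\Lambda^{\eps,\delta}(y,t)-\vartheta^{\eps,\delta}(t)$, and since $\vartheta^{\eps,\delta}$ is independent of $y$ while $\int_{\R^n}\partial_t|\varphi^{\eps,\delta}|^2\,dy=\frac{d}{dt}m_2^{\eps,\delta}=0$ by Lemma~\ref{lem:mass}, the last term is just $\int_{\R^n}\Lambda^{\eps,\delta}\,\partial_t|\varphi^{\eps,\delta}|^2\,dy$. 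Adding the three identities, the four integrals cancel in pairs and $\frac{d}{dt}E^{\eps,\delta}(t)=0$; integrating in $t$ finishes the smooth case.

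The actual work — and the main obstacle — is not this algebra but its rigorous justification for rough data. The energy \eqref{energy} is well defined whenever $\psi^{\eps,\delta},\varphi^{\eps,\delta}\in C(\R;H^1)$ (using $V\in\mathcal S\subset L^\infty$ for the interaction term), but for data only in $H^1$ there is no a priori reason that $t\mapsto\|\nabla_x\psi^{\eps,\delta}(t)\|^2_{L^2}$ is differentiable, nor that one may differentiate the interaction integral under the integral sign; hence the identities above must be established first on a dense class of smooth, decaying solutions and then transported to general solutions. That transport requires continuous dependence of the solution of \eqref{eq:TDSCF} on its initial data in the relevant norms (part of the well-posedness theory for the system); once this is available, passing to the limit in the conservation law $E^{\eps,\delta}(t)=E^{\eps,\delta}(0)$ yields the claim for all $t\in\R$. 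This is precisely the density/continuity mechanism already invoked at the end of the proof of Lemma~\ref{lem:mass}.
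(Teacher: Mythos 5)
Your proposal is correct and follows essentially the same route as the paper: differentiate $E^{\eps,\delta}$ in time, substitute the equations of \eqref{eq:TDSCF}, and observe that each kinetic-energy derivative cancels against the corresponding half of the interaction-term derivative, with the smooth computation transferred to $H^1$ solutions by density. The only difference is bookkeeping: you derive the cancellation via the standard energy-balance/continuity-equation identities and dispose of the purely time-dependent part $\vartheta^{\eps,\delta}$ of $\Lambda^{\eps,\delta}$ using mass conservation, whereas the paper groups the terms as $(\rm I)+(\rm III)$ and $(\rm II)+(\rm IV)$ and cancels them through commutator identities (where $\vartheta^{\eps,\delta}$ drops out automatically since it commutes with $\Delta_y$) — both are valid implementations of the same argument.
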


\begin{proof}
Assuming, as before that $\psi^{\eps, \delta}$ and $\varphi^{\eps, \delta}$ are sufficiently regular (and decaying), 
the proof is a lengthy but straightforward calculation. More precisely, using the shorthand
\[
E^{\varepsilon,\delta}(t)=\frac{\delta^2}{2} \| \nabla  \psi^{\eps, \delta}(\cdot, t) \|^2_{L^2_x} + \frac{\eps^2}{2} \| \nabla  \varphi^{\eps, \delta}(\cdot, t) \|^2_{L^2_y} + \langle \psi^{\varepsilon,\delta}\varphi^{\varepsilon,\delta}, V \psi^{\varepsilon,\delta}\varphi^{\varepsilon,\delta} \rangle_{L^2_{x,y}},
\]
one finds that
\[
\frac{d}{dt} E^{\varepsilon,\delta}(t)=(\rm{I})+(\rm{II})+(\rm{III})+(\rm{IV}),
\]
where we denote
\begin{align*}
({\rm I}):=&\frac{\delta^2}{2}\langle \nabla_x \partial_t \psi^{\varepsilon,\delta},\nabla_x \psi^{\varepsilon,\delta}\rangle_{L^2_x}  +\frac{\delta^2}{2}\langle \nabla_x  \psi^{\varepsilon,\delta},\nabla_x \partial_t \psi^{\varepsilon,\delta}\rangle_{L^2_x} ,\\
({\rm II}):=&\frac{\varepsilon^2}{2}\langle \nabla_y \partial_t \varphi^{\varepsilon,\delta},\nabla_y \varphi^{\varepsilon,\delta}\rangle_{L^2_y}  +\frac{\varepsilon^2}{2}\langle \nabla_y  \varphi^{\varepsilon,\delta},\nabla_y \partial_t \varphi^{\varepsilon,\delta}\rangle_{L^2_y},\\
({\rm III}):=&\langle \partial_t \psi^{\varepsilon,\delta} \varphi^{\varepsilon,\delta},V\psi^{\varepsilon,\delta}\varphi^{\varepsilon,\delta}\rangle_{L^2_{x,y}}+ \langle \psi^{\varepsilon,\delta} \varphi^{\varepsilon,\delta},V \partial_t \psi^{\varepsilon,\delta}\varphi^{\varepsilon,\delta}\rangle_{L^2_{x,y}},\\
({\rm IV}):=&\langle \psi^{\varepsilon,\delta} \partial_t \varphi^{\varepsilon,\delta},V\psi^{\varepsilon,\delta}\varphi^{\varepsilon,\delta}\rangle_{L^2_{x,y}}+ \langle \psi^{\varepsilon,\delta} \varphi^{\varepsilon,\delta},V \psi^{\varepsilon,\delta} \partial_t\varphi^{\varepsilon,\delta}\rangle_{L^2_{x,y}}.
\end{align*}
We will now show that $(\rm{I})+(\rm{III})=0$. By using the \eqref{eq:TDSCF}, one gets
\begin{eqnarray*}
({\rm I}) & = & -\frac{\delta}{2i} \left\langle \nabla_x  \left(-\frac{\delta^2}{2}\Delta_x + \langle \varphi^{\varepsilon,\delta},V\varphi^{\varepsilon,\delta} \rangle_{L^2_y} \right) \psi^{\varepsilon,\delta},\nabla_x \psi^{\varepsilon,\delta}\right\rangle_{L^2_x}  \\
&  & +\frac{\delta}{2i} \left\langle \nabla_x  \psi^{\varepsilon,\delta},\nabla_x  \left(-\frac{\delta^2}{2}\Delta_x + \langle \varphi^{\varepsilon,\delta},V\varphi^{\varepsilon,\delta} \rangle_{L^2_y}  \right) \psi^{\varepsilon,\delta}\right\rangle_{L^2_x}  \\
& = & -\frac{\delta}{2i} \langle (\nabla_x   \langle \varphi^{\varepsilon,\delta},V\varphi^{\varepsilon,\delta} \rangle_{L^2_y} )\  \psi^{\varepsilon,\delta},\nabla_x \psi^{\varepsilon,\delta}\rangle_{L^2_x}  \\
&  & +\frac{\delta}{2i} \langle \nabla_x  \psi^{\varepsilon,\delta},(\nabla_x   \langle \varphi^{\varepsilon,\delta},V\varphi^{\varepsilon,\delta} \rangle_{L^2_y})  \  \psi^{\varepsilon,\delta}\rangle_{L^2_x}  \\
& = &  \frac{\delta}{2i} \left \langle \psi^{\varepsilon,\delta} ,\left[\langle \varphi^{\varepsilon,\delta},V\varphi^{\varepsilon,\delta} \rangle_{L^2_y}  ,\Delta_x\right]\psi^{\varepsilon,\delta} \right\rangle_{L^2_x} ,
\end{eqnarray*}
where $[A,B]:=AB-BA$ denotes the commutator bracket. Similarly, one finds that
\begin{eqnarray*}
({\rm III}) & = &-\frac{\delta}{2i} \left\langle \left(-\frac{\delta^2}{2}\Delta_x + \langle \varphi^{\varepsilon,\delta},V\varphi^{\varepsilon,\delta} \rangle_{L^2_y}  \right) \psi^{\varepsilon,\delta} \phi^{\varepsilon,\delta} ,V\psi^{\varepsilon,\delta} \phi^{\varepsilon,\delta}  \right\rangle_{L^2_{x,y}} \\
 &  &+\frac{\delta}{2i} \left\langle  \psi^{\varepsilon,\delta} \phi^{\varepsilon,\delta} ,V \left(-\frac{\delta^2}{2}\Delta_x + \langle \varphi^{\varepsilon,\delta},V\varphi^{\varepsilon,\delta} \rangle_y \right) \psi^{\varepsilon,\delta} \phi^{\varepsilon,\delta}  \right\rangle_{L^2_{x,y}}\\
   & = & \frac{\delta}{2i} \left\langle \Delta_x\psi^{\varepsilon,\delta}, \langle \varphi^{\varepsilon,\delta},V\varphi^{\varepsilon,\delta} \rangle_y\psi^{\varepsilon,\delta} \right\rangle_{L^2_x} \\
  &  & -\frac{\delta}{2i} \left\langle \psi^{\varepsilon,\delta}, \langle \varphi^{\varepsilon,\delta},V\varphi^{\varepsilon,\delta} \rangle_y \Delta_x \psi^{\varepsilon,\delta} \right\rangle_{L^2_x} \\
  & = &  \frac{\delta}{2i} \left \langle \psi^{\varepsilon,\delta} ,\left[\Delta_x,\langle \varphi^{\varepsilon,\delta},V\varphi^{\varepsilon,\delta} \rangle_y \right]\psi^{\varepsilon,\delta} \right\rangle_{L^2_x} = - ({\rm I)},
\end{eqnarray*}
due to the fact that $[A, B] = -[B,A]$. Therefore, one concludes $(\rm{I})+(\rm{III})=0$. 
Analogously, one can show that $(\rm{II})+(\rm{IV})=0$ and hence, an integration in time yields $E^{\varepsilon,\delta}(t)=E^{\varepsilon,\delta}(0)$. 
Using a density arguments allows to extend this result to more general solution in $H^1$.
\end{proof}
Note however, that the energies defined for the respective subsystems are in general not conserved, unless $V(x,y)=V_1(x)+V_2(y)$.

\subsection{Existence of solutions} In this subsection, we shall establish global in-time existence of solutions to the TDSCF system \eqref{eq:TDSCF}. 
Since the dependence on $\eps$ and $\delta$ does not play a role here, we shall suppress their appearance for the sake of notation.

\begin{proposition}\label{prop:existence}
Let $V\in \mathcal S(\R^d_x\times \R^n_y)$ and $\psi_{\rm in} \in H^1(\R^d_x)$, $\varphi_{\rm in} \in H^1(\R^n_y)$. Then there exists a global 
strong solution $(\psi, \varphi) \in C(\R_t; H^1(\R^{d+n}))$ of \eqref{eq:TDSCF}, satisfying the conservation laws for mass and energy, as stated above.
\end{proposition}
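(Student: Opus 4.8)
The plan is to run a contraction-mapping argument for local existence and then globalize by means of the conservation laws of Lemmas \ref{lem:mass} and \ref{lem:energy}. The key structural observation is that, once the two equations in \eqref{eq:TDSCF} are \emph{decoupled} by freezing the nonlinear coupling terms, each becomes a linear Schr\"odinger equation with a smooth, bounded potential having bounded gradient: for any $\varphi\in L^2(\R^n_y)$ one has $\Upsilon[\varphi](x):=\langle\varphi,V\varphi\rangle_{L^2_y}\in W^{1,\infty}(\R^d_x)$ with $\|\Upsilon[\varphi]\|_{W^{1,\infty}}\le C\|V\|_{W^{1,\infty}}\|\varphi\|_{L^2_y}^2$ (using $V\in\mathcal S$), while for any $\psi\in H^1(\R^d_x)$ the potential $\Lambda[\psi](y)=\vartheta+\langle\psi,V\psi\rangle_{L^2_x}$ satisfies $\|\nabla_y\Lambda[\psi]\|_{L^\infty}\le C\|V\|_{W^{1,\infty}}\|\psi\|_{L^2_x}^2$ and $\|\Lambda[\psi]\|_{L^\infty}\le \tfrac{\delta^2}{2}\|\nabla\psi\|_{L^2_x}^2+C\|V\|_{L^\infty}\|\psi\|_{L^2_x}^2$. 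Solvability of a linear Schr\"odinger equation with an $L^\infty_t W^{1,\infty}_x$ potential in $C([0,T];H^1)$, with $L^2$-unitary propagator, is classical (Duhamel formula plus Gr\"onwall, or Kato's theory).

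For the local theory I would set $\Phi(\psi,\varphi):=(\widetilde\psi,\widetilde\varphi)$, where $\widetilde\psi$ solves $i\delta\partial_t\widetilde\psi=(-\tfrac{\delta^2}{2}\Delta_x+\Upsilon[\varphi])\widetilde\psi$ with $\widetilde\psi|_{t=0}=\psi_{\rm in}$, and $\widetilde\varphi$ solves $i\eps\partial_t\widetilde\varphi=(-\tfrac{\eps^2}{2}\Delta_y+\Lambda[\psi])\widetilde\varphi$ with $\widetilde\varphi|_{t=0}=\varphi_{\rm in}$, and apply the Banach fixed point theorem on a closed ball of $C([0,T];H^1(\R^d_x))\times C([0,T];H^1(\R^n_y))$ centred at $(\psi_{\rm in},\varphi_{\rm in})$. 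Using the Duhamel representations of $\widetilde\psi,\widetilde\varphi$, the estimates above, the elementary bound $\|(W_1-W_2)u\|_{H^1}\le\|W_1-W_2\|_{W^{1,\infty}}\|u\|_{H^1}$, and the local Lipschitz continuity of $\varphi\mapsto\Upsilon[\varphi]$ and $\psi\mapsto\Lambda[\psi]$ from $H^1$ into $W^{1,\infty}$, a Gr\"onwall argument shows that $\Phi$ maps the ball into itself and is a contraction provided $T=T(\|\psi_{\rm in}\|_{H^1},\|\varphi_{\rm in}\|_{H^1},\|V\|_{W^{1,\infty}},\eps,\delta)$ is small enough. This yields a unique strong solution $(\psi,\varphi)\in C([0,T];H^1)$; uniqueness on the whole interval of existence follows from the same Gr\"onwall estimate applied to the difference of two solutions.

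To pass from local to global, note that the local solution conserves $m_1,m_2$ by Lemma \ref{lem:mass} and the total energy $E$ by Lemma \ref{lem:energy}; at $H^1$ regularity the formal computations in those proofs are made rigorous in the usual way, e.g.\ by approximating $V$ and the data by Schwartz functions, propagating $H^2$ regularity of the approximate solutions, and passing to the limit. Since we may assume $V\ge0$, energy conservation gives the uniform bound $\tfrac{\delta^2}{2}\|\nabla\psi(t)\|_{L^2_x}^2+\tfrac{\eps^2}{2}\|\nabla\varphi(t)\|_{L^2_y}^2\le E(0)$, while mass conservation gives $\|\psi(t)\|_{L^2_x}=\|\psi_{\rm in}\|_{L^2_x}$ and $\|\varphi(t)\|_{L^2_y}=\|\varphi_{\rm in}\|_{L^2_y}$; together these furnish an a priori bound on $\|\psi(t)\|_{H^1_x}+\|\varphi(t)\|_{H^1_y}$ independent of $t$. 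As the local existence time depends only on these norms (and the fixed parameters), the usual continuation argument extends the solution to all $t\ge0$, and the symmetry $t\mapsto-t$, $(\psi,\varphi)\mapsto(\overline\psi,\overline\varphi)$ extends it to $t\in\R$.

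The argument is essentially routine; the points requiring a little care are technical. First, the purely time-dependent summand $\vartheta(t)=\tfrac{\delta^2}{2}\|\nabla\psi(t)\|_{L^2_x}^2$ of $\Lambda$ involves the full $\dot H^1$-norm of $\psi$, which is what forces the iteration to be carried out at the $H^1$ level rather than in $L^2$; it is harmless for regularity since it is constant in $y$ (it affects only $\|\Lambda\|_{L^\infty}$, not $\nabla_y\Lambda$, and could anyway be removed via the gauge transform \eqref{eq:gauge}). Second, one must justify the conservation laws at $H^1$ regularity in order to close the global a priori estimate. Apart from these two points — the contraction estimate and the $H^1$-conservation laws — the self-consistent coupling plays no genuine role and the analysis reduces to that of a linear Schr\"odinger equation with a bounded potential.
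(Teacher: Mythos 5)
Your proposal is correct, and its overall skeleton (local existence by a fixed-point argument at the $H^1$ level, then globalization via the conservation laws of Lemmas \ref{lem:mass} and \ref{lem:energy} together with $V\ge 0$ and a continuation/blow-up argument) coincides with the paper's. The implementation of the local step differs, though: the paper rewrites \eqref{eq:TDSCF} as a single semilinear equation $i\partial_t\Psi=\mathbb H\Psi+f(\Psi)$ for $\Psi=(\psi,\varphi)$, checks that $f$ is locally Lipschitz on $H^1(\R^{d+n})$ (using $V\in\mathcal S$ exactly as in your $W^{1,\infty}$ bounds on $\Upsilon[\varphi]$ and $\Lambda[\psi]$), and then invokes the abstract semilinear theory of Cazenave (Theorem 3.3.9), which delivers local well-posedness together with the blow-up alternative in one stroke; you instead freeze the self-consistent potentials, solve the resulting \emph{linear} Schr\"odinger equations with time-dependent $W^{1,\infty}$ potentials, and run the Banach fixed point and Gr\"onwall estimates by hand. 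Your route is more self-contained (no appeal to the abstract theorem, at the price of needing the linear propagator theory for time-dependent bounded potentials and of carrying the purely time-dependent summand $\vartheta$ through the $H^1$-level iteration, which you correctly identify), while the paper's route is shorter and gets the blow-up alternative for free. Your additional remark on rigorously justifying the conservation laws at $H^1$ regularity by regularization addresses a point the paper passes over with a density argument; both treatments are acceptable, and the globalization step is then identical.
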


Clearly, this also yields global existence for the system \eqref{eq:TDSCF} with $0< \eps, \delta <1$ included.

\begin{proof}
We shall first prove local (in-time) well-posedness of the initial value problem \eqref{eq:TDSCF}: To this end, we consider $\Psi(\cdot, t) =(\psi (\cdot, t), \varphi(\cdot, t)): \R^{d+n}\to \C^2$ and 
define the associated $L^2(\R^{d+n})$ norm by
\[
\| \Psi (\cdot, t) \|^2_{L^2}:= \| \psi (\cdot, t) \|^2_{L^2_x} + \| \varphi (\cdot, t) \|^2_{L^2_y},
\]
and consequently set $H^1(\R^{d+n}):= \{ \Psi \in L^2(\R^{d+n}) \, : \, |\nabla \Psi | \in L^2(\R^{d+n})\}$. Using this notation, the TDSCF system \eqref{eq:TDSCF} can be written as
\[
i \partial_t \Psi = \mathbb H \Psi + f(\Psi),
\]
where
\[
\mathbb H := \left ( \begin{matrix} -\frac{1}{2} \Delta_x & 0 \\ 0  & - \frac{1}{2} \Delta_y \end{matrix} \right), 
\quad f(\Psi) :=  \left ( \begin{matrix} \langle \varphi, V \varphi \rangle_{L^2_y} \psi & 0 \\ 0  & \langle \psi, h \psi \rangle_{L^2_x}  \varphi \end{matrix} \right) .
\]
Clearly, $\mathbb H$ is the generator of a strongly continuous unitary Schr\"odinger group $U(t):= e^{-i t \mathbb H}$, which can be used to rewrite the system using Duhamel's formula as 
\begin{equation}\label{duhamel}
\Psi(t,\cdot) = U(t) \Psi_{\rm in}(\cdot)  - i \int_0^t U(t-s) f(\Psi(\cdot, s)) \, ds.
\end{equation}
Following classical semi-group arguments, cf. \cite{Caz}, it suffices to show that $f(\Psi)$ is locally Lipschitz in $H^1(\R^{d+n})$ 
in order to infer the existence of a unique local in-time solution $\Psi \in C([0,T), H^1(\R^{d+n}))$. This is not hard to show, since, for example:
\begin{align*}
\| f(\Psi_1) - f(\Psi_2) \|_{L^2} \le \| V \|_{L^\infty} (\| \Psi_1 \|_{L^2}^2  + \| \Psi_2 \|_{H^1}^2) \|\Psi_1 - \Psi_2 \|_{L^2},
\end{align*}
in view of the fact that $V\in \mathcal S$. A similar argument can be done for $\| \nabla f(\Psi) \|_{L^2}$ and hence, one gets that there exists a $C=C(V, \|\Psi_1 \|_{H^1},  \|\Psi _2\|_{H^1})>0$ such that
\begin{align*}
\| f(\Psi_1) - f(\Psi_2) \|_{H^1} \le  C   \|\Psi_1 - \Psi_2 \|_{H^1}.
\end{align*}
Using this, \cite[Theorem 3.3.9]{Caz} implies the existence of a $T=T(\| \Psi \|_{H^1})>0$ and a unique solution $\Psi \in C([0,T), H^1(\R^{d+n}))$ of \eqref{duhamel}. It is then also clear, that 
this solution satisfies the conservation of mass and energy for all $t\in [0,T)$. 
Moreover, the quoted theorem also implies that if $T< +\infty$, then
\begin{equation}\label{blowup}
\lim_{t \to T_-} \| \Psi (\cdot, t) \|_{H^1} = \infty.
\end{equation}
However, having  in mind the conservation laws for mass and energy stated in Lemmas \ref{lem:mass} and \ref{lem:energy} together with the fact that we assume w.l.o.g. $V(x,y)\ge 0$, 
we immediately infer that $ \| \Psi (\cdot, t) \|_{H^1}  \le C$ for all $t\in \R$ and hence, the blow-up alternative \eqref{blowup} implies global in-time existence of the obtained solution.
\end{proof}

\begin{remark}
Note that this existence result rests on the fact that the term $\Lambda^{\eps, \delta}(y,t):= \langle \psi^{\eps, \delta},h^\delta \psi^{\eps, \delta} \rangle_{L_x^{2}}$ is interpreted in 
a weak sense, see \eqref{eq:Lambda}. In order to interpret it in a strong sense, one would need to require higher regularity, in particular $\psi^{\eps, \delta} \in H^2(\R^d_x)$.
\end{remark}


\section{Review of Wigner transforms and Wigner measures}\label{sec:Wigner}

The use of Wigner transformation and Wigner measures in the analysis of (semi)-classical asymptotic is, by now, very well established. 
We shall in the following, briefly recall the main results developed in \cite{LiPa, GMMP} (see also \cite{GaMa, MaMa, SMM} for further applications and discussions of Wigner measures):

Denote by $\{ f^\eps \}_{0<\eps \le 1}$ a family of functions $f^\eps\in L^2(\R^d)$, depending continuously on a small parameter $\eps>0$, and by
$$
(\mathcal F_{x\to \xi} f^\e)(\xi)\equiv \widehat f^\e(\xi):= \int_{\R^d} f^\e(x) e^{- i x\cdot \xi} dx.
$$
the corresponding Fourier transform. The associated $\eps$-scaled Wigner transform is then given by \cite{Wi}:
\begin{equation}\label{wig}
w^\e [f^\e](x,\xi): = \frac{1}{(2\pi )^d} \int_{\R^d}
f^\e\left(x-\frac{\e}{2}z 
\right)\overline{f^\e} \left(x+\frac{\e}{2}z
\right)e^{i z \cdot \xi}\,  dz. 
\end{equation}
Clearly, one has
\[
(\mathcal F_{\xi \to z}w)(x,z)= \int_{\R^d} w(x,\xi) e^{- i z \cdot \xi} d\xi =f^\e \Big (x+
\frac{\e}{2} z\Big) \overline{f^\e} \Big(x- \frac{\e}{2} z\Big),
\]
and thus Plancherel's theorem together with a simple change of variables yields
$$
\|w^\e  \|_{L^2(\R^{2d}) }= \e^{-d} (2 \pi)^{-d/2} \|f^\e  \|^2_{L^2(\R^{d})} .
$$
The real-valued function $w^\e(x,\xi) $ acts as a quantum mechanical analogue for classical phase-space distributions. However, $w^\eps(x,\xi)\not \ge0$ in general. 
A straightforward computation shows that the position density associated to $f^\eps$ can be computed via
\begin{equation*}\label{Wdensity}
|f^\eps(x)|^2= \int_{\R^d} w^\e (x,\xi) \, d\xi .
\end{equation*}
Moreover, by taking higher order moments in $\xi$ one (formally) finds
\[
\eps \text{Im} (\overline f^\eps(x) \nabla f^\eps(x)) = \int_{\R^d} \xi w^\e (x,\xi) \, d\xi .
\]
and 
\[
\eps^2 |\nabla f^\eps(x)|^2 = \int_{\R^d} |\xi|^2 w^\e (x,\xi) \, d\xi .
\]
In order to make these computations rigorous, the integrals on the r.h.s. have to be understood in an appropriate sense, since $w^\e \not \in L^1(\R^m_x\times \R^m_\xi)$ in general, 
cf. \cite{LiPa} for more details. 

It has been proved in \cite{GMMP, LiPa} that if $f^\eps$ is uniformly bounded in $L^2(\R^d)$ as $\eps \to 0_+$, i.e., if 
\[\sup_{0< \eps \le 1} \| f^\eps \|_{L_x^2} \le C,\] where $C>0$ is an $\eps$-independent constant, then 
the set of Wigner functions $\{w^\e\}_{0<\e\le 1} \subset \mathcal S'(\R^d_x \times \R^d_\xi)$ is weak$^\ast$ compact.
Thus, up to extraction of sub-sequences, there exists a limiting object $w^0\equiv w\in \mathcal S'(\R^d_x \times \R^d_\xi)$ such that
$$
w^\e[f^\e ]  \stackrel{\e\rightarrow 0_+
}{\longrightarrow}  \mu \quad \text{in $\mathcal S'(\R^d_x \times \R^d_\xi) \, {\rm weak}^\ast$} .
$$
It turns out that the limit is in fact a non-negative Radon measure on phase-space $\mu \in \mathcal M^+(\R^d_x \times \R^d_p)$, called the Wigner measure 
(or, semi-classical defect measure) of $f^\e$, cf. \cite[Theorem III.1]{LiPa}. If, in addition it also holds that $f^\e$ is {\it $\e$-oscillatory}, i.e., 
\[\sup_{0< \eps \le 1} \| \eps \nabla f^\eps \|_{L_x^2} \le C,\]
then one also gets (up to extraction of sub-sequences)
\[
|f^\eps(x)|^2 \stackrel{\e\rightarrow 0_+}{\longrightarrow}  \int_{\R^d} \mu(x, d\xi) ,
\]
in $\mathcal M^+(\R_x^m) \, {\rm weak}^\ast$. Indeed, the Wigner measure $\mu$ is known to encode the classical limit of all physical observables. More precisely, 
for the expectation value of any Weyl-quantized operator $\text{Op}^\e(a)$, corresponding to a classical symbol 
$a(x, p) \in \mathcal S(\R^d_x\times \R^d_\xi)$, one finds as in \cite{GMMP}
\[
\langle f^\e, \text{Op}^\e(a)f^\e\rangle_{L_x^2} =  \iint_{\R^{2d}} a(x,p) w^\eps[f^\eps](dx, d\xi) , 
\]
and hence
\begin{equation}\label{eq:expectation}
\lim_{\eps \to 0_+}\langle f^\e, \text{Op}^\e(a)f^\e\rangle_{L_x^2}  =   \iint_{\R^{2d}} a(x,p) \mu(dx, d\xi) ,
\end{equation}
where the right hand side resembles the usual formula from classical statistical mechanics. 

\begin{remark}
A different topology to study the classical limit of $w^{\eps}$, as $\eps \to 0_+$, was introduced in \cite[Proposition III.1]{LiPa}.
It requires less regularity on the test functions which, in particular, allows to consider much rougher potentials $V$ not necessarily in $\mathcal S(\R^d_x\times \R^n_y)$.
\end{remark}

Finally, we recall that if $f^\e\in C_{\rm b}(\R_t;L^2(\R^d))$ solves a semi-classically scaled Schr\"odinger equation of the form 
\[
i \eps \partial_t f^\e = -\frac{\eps^2}{2}\Delta f^\e + V(x) f^\e, \quad f^\e_{\mid t=0}=f_{\rm in}^\e(x),
\]
then the associated Wigner transformed equation for $w^\e\equiv w^\e[f^\e]$ reads
\[
\partial_t w^\e + \xi\cdot \nabla_x w^\e + \Theta^\e[V]w^\e =0, \quad w^\e_{\mid t=0}=w_{\rm in}^\e(x,\xi),
\]
where $w_{\rm in}^\e\equiv w^\e[\psi_{\rm in}^\e]$ and $\Theta^\varepsilon  [V]$ is a pseudo-differential operator given by \cite{LiPa}:
\begin{align*}
(\Theta^\e [V] w^\e)(x,\xi, t) := -\frac{i}{(2\pi )^d } \iint_{\mathbb R^{2d}} \delta V^\e (x,y)   w^\e (x,\zeta, t)\ e^{i y \cdot (\xi- \zeta)} \, dy \, d\zeta .
\end{align*}
Here, the symbol $\delta V^\e$ is found to be
\begin{align*}\label{delta}
\delta V^\e (x,y)= \frac{1}{\e} \left( V\Big (x+
\frac{\e}{2} y\Big)-V\Big(x- \frac{\e}{2} y\Big)\right).
\end{align*}
Under sufficient regularity assumptions on $V$, one consequently obtains \[
\delta V^\e \stackrel{\e\rightarrow 0_+
}{\longrightarrow} y \cdot \nabla_x V(x).\] 
Using this, it can be proved, see \cite{LiPa, GMMP}, that the Wigner measures  $\mu$ solves \emph{Liouville's equation} on phase space, i.e.
\begin{equation}\label{eq:liouville}
\partial_t \mu + \diver_x( \xi  \mu)  - \diver_\xi (\nabla_x V(x) \mu ) = 0, \quad \mu_{\mid t=0}=\mu_{\rm in}(x,\xi),
\end{equation}
in the sense of distributions $\mathcal D'( \R^d_x\times\R^d_\xi\times \R_t)$. Here, $\mu_{\rm in}$ is the weak$^\ast$ limit of $w^\e_{\rm in}$, along sub-sequences. 
With some further effort (and sufficient regularity assumptions on $V$) one can then show that indeed, $\mu \in C_{\rm b}(\R_t; \mathcal M^+(\R^d_x\times \R^d_\xi))$ satisfying, 
for any test-function $\chi\in \mathcal S(\R^n_x \times \R^n_\xi)$,
\[
\iint_{\R^{2n} } \chi(x,\xi) \mu(x,\xi, t) \, dx\, d\xi = \iint_{\R^{2n}}  \chi(\Phi_{t}(x,\xi)) \mu_{\rm in}(dx,d\xi),
\]
where $\Phi_t:\R^{2d}\to \R^{2d}$ is the Hamiltonian flow associated to \eqref{eq:liouville}:
\begin{equation}
\left\{ \begin{split}
&\dot x(t) = \xi(t), \quad x(0)= x_0\in \R^d,\\
&\dot \xi(t) = -\nabla_x V (x(t)), \quad \xi(0)=\xi_0\in \R^d.
\end{split}\right.
\end{equation}
This allows to prove uniqueness of the weak solution of \eqref{eq:liouville}, provided 
the initial measure $\mu_{\rm in}$ is the same for all sub-sequences $\{\eps_n\}_{n\in \N}$ of $w^\eps[\varphi^\eps_{\rm in }]$, see \cite[Theorem IV.1]{LiPa}.


\section{The mixed quantum-classical limit}\label{sec:semi} 
In this section we will investigate the semi-classical limit of the TDSCF system \eqref{eq:TDSCF}, which corresponds to the case $\eps \to 0_+$ and $\delta = O(1)$ {\it fixed}. 
In other words, we want to pass to the classical limit in the equation for $\varphi^{\eps, \delta}$ only, while retaining the full quantum mechanical dynamics for $\psi^{\eps, \delta}$. 
To this end, we introduce the $\eps$-scaled Wigner transformation of $\varphi^{\eps, \delta}$ in the form
\[
w^\e [\varphi^{\e, \delta}](y,\eta, t): = \frac{1}{(2\pi)^n} \int_{\R^n}
\varphi^{\e, \delta}\left(y-\frac{\e}{2}z , t\right)\overline{\varphi^{\e, \delta}} \left(y+\frac{\e}{2}z, t \right)e^{i z \cdot \eta}\,  dz. 
\]
In this subsection, we could, in principle, suppress the dependence on $\delta$ completely (since it is assumed to be fixed), 
but since we will consider the subsequent $\delta \to 0_+$ limit in Section \ref{sec:classical}, 
we shall keep its appearance within the superscript.

Recalling that initially $\| \varphi^{\eps}_{\rm in} \|_{L^2_y}=1$, the a-priori estimates established in 
Lemma \ref{lem:mass} and Lemma \ref{lem:energy} together with the fact that $V(x,y)\ge0$,
imply the uniform (in $\eps$) bounds
\[
\sup_{0< \eps \le 1} (\| \varphi^{\e, \delta} (t,\cdot) \|_{L^2_y} + \| \eps \nabla \varphi^{\e, \delta} (t,\cdot) \|_{L^2_y}) \le C(t)
\]
for any $t\in \R$, where $C(t)\ge0$ is a constant independent of $\eps$ and $\delta$. In other words, $\varphi^{\e, \delta}$ is $\eps$-oscillatory and we consequently infer the existence of a limiting 
Wigner measure $\mu^{0,\delta}\equiv \mu^\delta\in \mathcal M^+(\R^d_y \times \R^d_\eta)$ such that (up to extraction of sub-sequences)
\[
w^\eps [\varphi^{\e, \delta}] \stackrel{\e\rightarrow 0_+
}{\longrightarrow}  \mu^\delta \quad
\text{in $L^\infty(\R_t; \mathcal S'(\R^n_y \times \R^n_\eta)) \, 
{\rm weak}^\ast$},
\]
together with 
\[
|\varphi^{\e, \delta} (y,t)|^2 \stackrel{\e\rightarrow 0_+}{\longrightarrow}  \int_{\R^n_\eta} \mu^\delta(y, d\eta, t). 
\]
The measure $\mu^\delta$ encodes the classical limit of the subsystem described by the $y$-variables only. In addition, having in mind 
our assumption that $V\in \mathcal S(\R^d_x\times \R^n_y)$, we directly infer that 
\begin{align*}
\Upsilon^{\eps, \delta}(x,t) := &\ \int_{\R_y^n} V(x,y) |\varphi^{\eps, \delta}(y,t)|^2 \, dy \\
&\ \stackrel{\e\rightarrow 0_+
}{\longrightarrow}  \iint_{\R_{y,\eta}^{2n}} V(x,y) \mu^\delta( dy, d\eta, t)\equiv \Upsilon^{\delta}(x,t),
\end{align*}
point-wise for all $(x,t)\in \R^{d+1}$. The right hand side of the above relation describes  
the classical limit of the self-consistent potential $\Upsilon^{\eps, \delta}$ obtained through the Wigner measure of $\varphi^{\eps, \delta}$. 
Note that both $\Upsilon^{\eps, \delta}, \Upsilon^{ \delta} \in C(\R_t;\mathcal S(\R_x^d))$, in view of our assumption \eqref{eq:hyp} on $V$ and the existence result for 
the solution $\varphi^{\eps, \delta}$, cf. Propositon \ref{prop:existence}. Moreover, $\Upsilon^{\eps, \delta}$ is uniformly bounded in $\e$, since
\begin{equation}\label{eq:Ubound}
| \Upsilon^{\eps, \delta}(y,t) | \le \sup_{x,y} |V(x,y)| \, \| \varphi^{\eps, \delta} (\cdot, t) \|^2_{L_x^2} \le \| V \|_{L^\infty},
\end{equation}
since $\| \varphi^{\eps, \delta} (\cdot, t) \|_{L_x^2} =1$, $\forall \, t\in \R$ in view of the a-priori estimate of Lemma \ref{lem:mass}. 

The following Proposition shows that 
the solution of the first equation within the TDSCF system \eqref{eq:TDSCF} stays close to the one where the potential
$\Upsilon^{\eps, \delta}$ is replaced by its limit $\Upsilon^{\delta}$.

\begin{proposition}\label{prop:potapprox} 
Let $V\in\mathcal S(\R^d_x\times \R^n_y)$ and $\psi^{\eps, \delta}$, $\psi^{\delta}\in C(\R_t; H^1(\R^d_x))$ solve, respectively
\[
i \delta \partial_t \psi^{\eps, \delta}  = \left (-\frac{\delta^2}{2}\Delta_x  + \Upsilon^{\eps, \delta}(x,t)  \right) \psi^{\eps, \delta} \, ,
\quad \psi^{\eps, \delta} _{\mid t=0} = \psi^\delta_{\rm in}(x),
\]
and
\[
i \delta \partial_t \psi^{\delta}  = \left (-\frac{\delta^2}{2}\Delta_x  + \Upsilon^{\delta}(x,t)  \right) \psi^{\delta} \, ,
\quad \psi^{\delta} _{\mid t=0} = \psi^\delta_{\rm in}(x),
\]
then, for any $T>0$ 
\[
\sup_{t\in [0,T]} \| \psi^{\eps, \delta}(\cdot, t) - \psi^{\delta}(\cdot, t) \|_{L^2_x} \stackrel{\eps \rightarrow 0}{\longrightarrow} 0.
\]
\end{proposition}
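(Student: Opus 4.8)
The plan is to estimate the difference $e^{\eps,\delta}:=\psi^{\eps,\delta}-\psi^{\delta}$ directly, by viewing it as the solution of an inhomogeneous linear Schr\"odinger equation and then controlling the inhomogeneity by dominated convergence.

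First I would subtract the two equations, rewriting the $\psi^{\delta}$-equation so that it carries the \emph{same} potential $\Upsilon^{\eps,\delta}$ as the $\psi^{\eps,\delta}$-equation, at the cost of a source term. This gives
\[
i\delta\,\partial_t e^{\eps,\delta} = \left(-\frac{\delta^2}{2}\Delta_x + \Upsilon^{\eps,\delta}(x,t)\right)e^{\eps,\delta} + \left(\Upsilon^{\eps,\delta}(x,t)-\Upsilon^{\delta}(x,t)\right)\psi^{\delta},\qquad e^{\eps,\delta}_{\mid t=0}=0.
\]
Since $\Upsilon^{\eps,\delta}\in C(\R_t;\mathcal S(\R^d_x))$ is real-valued, the operator $\tfrac1\delta\big(-\tfrac{\delta^2}{2}\Delta_x+\Upsilon^{\eps,\delta}(\cdot,t)\big)$ generates a strongly continuous \emph{unitary} propagator $U^{\eps,\delta}(t,s)$ on $L^2(\R^d_x)$ (time-dependent Schr\"odinger operator with a smooth bounded potential, cf. \cite{Caz}), and Duhamel's formula yields
\[
e^{\eps,\delta}(\cdot,t) = -\frac{i}{\delta}\int_0^t U^{\eps,\delta}(t,s)\Big[\big(\Upsilon^{\eps,\delta}(\cdot,s)-\Upsilon^{\delta}(\cdot,s)\big)\psi^{\delta}(\cdot,s)\Big]\,ds .
\]
Taking $L^2_x$ norms and using unitarity of $U^{\eps,\delta}(t,s)$ gives, for every $t\in[0,T]$,
\[
\|e^{\eps,\delta}(\cdot,t)\|_{L^2_x}\le \frac{1}{\delta}\int_0^T \big\|\big(\Upsilon^{\eps,\delta}(\cdot,s)-\Upsilon^{\delta}(\cdot,s)\big)\psi^{\delta}(\cdot,s)\big\|_{L^2_x}\,ds .
\]
(One could alternatively derive the same bound from the $L^2$ energy identity obtained by multiplying the difference equation by $\overline{e^{\eps,\delta}}$ and taking imaginary parts — the kinetic and $\Upsilon^{\eps,\delta}e^{\eps,\delta}$ terms drop because $\Upsilon^{\eps,\delta}$ is real — but that computation needs an extra regularization/density argument, so the Duhamel route is cleaner.)

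It then remains to show the right-hand side tends to $0$ as $\eps\to0_+$, and here I would apply dominated convergence twice. For each fixed $s$, the pointwise convergence $\Upsilon^{\eps,\delta}(x,s)\to\Upsilon^{\delta}(x,s)$ (established above) together with the uniform bound $|\Upsilon^{\eps,\delta}|,|\Upsilon^{\delta}|\le\|V\|_{L^\infty}$ from \eqref{eq:Ubound} and the domination $|(\Upsilon^{\eps,\delta}(\cdot,s)-\Upsilon^{\delta}(\cdot,s))\psi^{\delta}(\cdot,s)|^2\le 4\|V\|_{L^\infty}^2|\psi^{\delta}(\cdot,s)|^2\in L^1(\R^d_x)$ give $\|(\Upsilon^{\eps,\delta}(\cdot,s)-\Upsilon^{\delta}(\cdot,s))\psi^{\delta}(\cdot,s)\|_{L^2_x}\to0$. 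By mass conservation for the linear equation solved by $\psi^{\delta}$ (same argument as in Lemma \ref{lem:mass}), $\|\psi^{\delta}(\cdot,s)\|_{L^2_x}=\|\psi^\delta_{\rm in}\|_{L^2_x}$ is constant, so this integrand is dominated uniformly in $s$ and $\eps$ by the constant $2\|V\|_{L^\infty}\|\psi^\delta_{\rm in}\|_{L^2_x}$, which is integrable on the finite interval $[0,T]$; a second dominated convergence (in $s$) then forces the integral to $0$. Since the resulting bound on $\|e^{\eps,\delta}(\cdot,t)\|_{L^2_x}$ is uniform in $t\in[0,T]$, this yields the claim.

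The only genuinely delicate point is the regularity needed to justify an $L^2$ estimate for merely $H^1$ solutions, and this is avoided by working with the unitary propagator; beyond that the argument is essentially bookkeeping, resting on the $\eps$-uniform $L^\infty$ control of $\Upsilon^{\eps,\delta}$ in \eqref{eq:Ubound} and on mass conservation for $\psi^{\delta}$. (If the pointwise convergence $\Upsilon^{\eps,\delta}\to\Upsilon^{\delta}$ is only available along a subsequence, the conclusion holds along the same subsequence, and uniqueness of the Wigner measure then promotes it to the full family.)
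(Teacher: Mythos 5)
Your proposal is correct and follows essentially the same route as the paper's proof: both exploit the unitary propagator generated by the time-dependent Hamiltonian with potential $\Upsilon^{\eps,\delta}$ (your Duhamel formula for the difference is the same computation as the paper's $\int_0^t \frac{d}{ds}\bigl(\mathcal{U}^{\eps,\delta}_1(0,s)\psi^{\delta}(\cdot,s)\bigr)ds$ identity), reduce everything to $\int_0^t \|(\Upsilon^{\eps,\delta}-\Upsilon^{\delta})\psi^{\delta}\|_{L^2_x}\,ds$, and conclude by dominated convergence using the uniform bound \eqref{eq:Ubound} and the pointwise convergence $\Upsilon^{\eps,\delta}\to\Upsilon^{\delta}$. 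Your two-step dominated-convergence argument (first in $x$ for fixed $s$, then in $s$) is in fact a slightly more careful handling of the time-uniformity than the paper's sup-in-$t$ formulation, but it is not a different method.
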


\begin{proof} Denote the Hamiltonian operators corresponding to the above equations by
\[
H^{\eps, \delta}_1=-\frac{\delta^2}{2}\Delta_x +\Upsilon^{\eps, \delta}(x,t), \quad 
H^{\delta}_2=-\frac{\delta^2}{2}\Delta_x + \Upsilon^{\delta}(x,t).
\]
In view of our assumptions on the potential $V$ 
and the existence result given in Proposition \ref{prop:existence}, we infer that $H_1$ and $H_2$ 
are essentially self-adjoint on $L^2(\R_x^d)$ and hence they generate unitary propagators $\mathcal{U}^{\eps, \delta}_1(t,s)$ and $\mathcal{U}^{\delta}_2(t,s)$, such that
\[
\mathcal{U}^{\eps, \delta}_1(t,s)\psi^{\varepsilon, \delta}(x,s)=\psi^{\varepsilon, \delta}(x,t),\quad \mathcal{U}^{ \delta}_2(t,s)\psi^{\delta}(x,s)=\psi^{\delta}(x,t).
\]
Therefore, one obtains
\begin{align*}
 \| \psi^{\eps, \delta}(\cdot, t) - \psi^{\delta}(\cdot, t) \|_{L^2_x} = & \ \big \| \, \mathcal{U}^{\eps, \delta}_1(t,s)\psi^{\varepsilon, \delta}_{\rm in} (\cdot) - \psi^{\delta}(\cdot, t) \, \big \|_{L^2_x} \\
  = & \ \big \| \, \psi^{\varepsilon, \delta}_{\rm in} (\cdot) - \mathcal{U}^{\eps, \delta}_1(0,t) \psi^{\delta}(\cdot, t) \, \big \|_{L^2_x}\\
   = & \ \left\Vert \int_0^t \frac{d}{ds} \left(\mathcal{U}^{\eps, \delta}_1(0,s) \psi^{\delta}(\cdot, s) \right) ds \right\Vert_{L^2_x} ,
 \end{align*} 
using $(\mathcal{U}^{\eps, \delta}_1)^{-1}(t,s)=\mathcal{U}^{\eps, \delta}_1(s,t)$. Computing further, one gets
\begin{align*}  
 \| \psi^{\eps, \delta}(\cdot, t) - \psi^{\delta}(\cdot, t) \|_{L^2_x}    = &\, \left\Vert \int_0^t \Big(\frac{d}{ds} \mathcal{U}^{\eps, \delta}_1(0,s)\Big)\psi^{\delta}(\cdot, s)
 +\mathcal{U}^{\eps, \delta}_1(0,s)\frac{d}{ds} \psi^{\delta}(\cdot, s) ds \right\Vert_{L^2_x}  \\
   = & \, \left\Vert  \int_0^t  \mathcal{U}^{\eps, \delta}_1(0,s) \left( H^{\eps, \delta}_1\psi^{\delta}(\cdot, s) - H^{\delta}_2 \psi^{\delta}(\cdot, s) \right) ds \right\Vert _{L^2_x}  \\
   = & \, \left\Vert  \int_0^t  \mathcal{U}^{\eps, \delta}_1(0,s) \left( \Upsilon^{\eps, \delta}(\cdot, s) - \Upsilon^{\delta}(\cdot, s) \right) \psi^{\delta}(\cdot, s) \, ds \right\Vert _{L^2_x}  .
\end{align*}
By Minkowski's inequality, one thus has
\[
 \| \psi^{\eps, \delta}(\cdot, t) - \psi^{\delta}(\cdot, t) \|_{L^2_x}  \le\int_0^t   \left\Vert    \left( \Upsilon^{\eps, \delta}(\cdot, s) - \Upsilon^{\delta}(\cdot, s) \right) \psi^{\delta}(\cdot, s)  \right\Vert _{L^2_x} \, ds,
\]
and hence,
\[
\sup_{t\in [0,T]} \| \psi^{\eps, \delta}(\cdot, t) - \psi^{\delta}(\cdot, t) \|_{L^2_x} \le C_T \sup_{t\in [0,T]} \big \| \left( \Upsilon^{\eps, \delta}(\cdot, t) - \Upsilon^{\delta}(\cdot, t) \right) \psi^{\delta}(\cdot, t) \big \|_{L^2_x}. 
\]
Now, since  $\Upsilon^{\eps, \delta}(\cdot, t)$ is bounded in $L^\infty$ uniformly in $\e$, cf. \eqref{eq:Ubound}, and since
\[
\left( \Upsilon^{\eps, \delta}(x, t) - \Upsilon^{\delta}(x, t)\right) \stackrel{\e\rightarrow 0_+
}{\longrightarrow} 0,
\]
point-wise in $x$, Lebesgue's dominated convergence theorem is sufficient to conclude the desired result.
\end{proof}

In order to identify the limiting measure $\mu^\delta$ we shall derive the corresponding evolutionary system. As a first step, we recall 
$w^{\e, \delta}\equiv w^{\eps}[\varphi^{\eps, \delta}]$ solves
\begin{equation} \label{wignereq}
\partial _t w^{\e, \delta}+\eta \cdot \nabla_y w^{\e, \delta} + \Theta [\Lambda^{\eps, \delta}]w^{\varepsilon, \delta} =0, 
\end{equation}
where $\Theta [\Lambda^{\eps, \delta}]$ is explicitly given by
\begin{align*}
\Theta [\Lambda^{\eps, \delta}]w^{\varepsilon, \delta}(y, \eta, t) := -\frac{i}{(2\pi )^n } \iint_{\mathbb R^{2n}} \delta \Lambda^{\eps, \delta}(y,z, t)  w^{\varepsilon, \delta}
 (y,\zeta, t)\ e^{i z \cdot (\eta- \zeta)} \, dz \, d\zeta ,
\end{align*}
and the associated symbol $\delta \Lambda^{\eps, \delta}$ reads
\begin{align*}
\delta \Lambda^{\eps, \delta} (y, z, t) & = \frac{1}{\e} \left( \Lambda^{\eps, \delta}\big (y+
\frac{\e}{2} z, t\big)-\Lambda^{\eps, \delta}\big(y- \frac{\e}{2} z, t\big)\right) \\
& = \frac{1}{\e} \left(
\langle \psi^{\eps, \delta},V \psi^{\eps, \delta} \rangle_{L_x^{2}} \big (y+
\frac{\e}{2} z, t\big)
-\langle \psi^{\eps, \delta},V \psi^{\eps, \delta} \rangle_{L_x^{2}}\big (y-
\frac{\e}{2} z, t\big) \right),
\end{align*}
in view of \eqref{eq:Lambda}. In particular, this shows that the purely time-dependent term $\vartheta^{\eps, \delta}(t)$ does contribute to the symbol of the pseudo-differential operator. 
The same would have been true if 
we would have used the time-dependent Gauge transformation \eqref{eq:gauge} from the beginning. Introducing the short hand notation 
\[
\mathcal V^{\eps, \delta}(y,t):=\langle \psi^{\eps, \delta}(\cdot, t),V(\cdot, y) \psi^{\eps, \delta}(\cdot, t) \rangle_{L_x^{2}},\] 
one can rewrite
\begin{align*}
\delta \Lambda^{\eps, \delta} (y, z, t) = & \,
 \frac{1}{\e} \left(
\mathcal V^{\eps, \delta} \big (y+
\frac{\e}{2} z, t\big)
-\mathcal V^{\eps, \delta}\big (y-
\frac{\e}{2} z, t\big) \right),
\end{align*}
and thus $\Theta [\Lambda^{\eps, \delta}]\equiv \Theta [\mathcal V^{\eps, \delta}]$. Note that $\mathcal V^{\eps, \delta}\in C(\R_t;\mathcal S(\R_y^n))$.
However, the main difference to the case of a given potential $V$ is that here $\mathcal V^{\eps, \delta}$ itself depends on $\eps$ and is computed self-consistently 
from the solution of $\psi^{\e, \delta}$. We can therefore not directly apply the Wigner transformation results of \cite{GMMP}. We nevertheless shall prove in the 
following proposition that the limit of $\Theta [\mathcal V^{\eps, \delta}]$ as $\eps\to 0_+$ is indeed what one would formally expect it to be.

\begin{proposition}\label{prop:wignerpotapprox}
Let $V\in \mathcal S(\R_x^{d}\times \R^d_y)$ and $\psi^{\eps, \delta}$, $\psi^{\delta}\in C(\R_t; H^1(\R^d_x))$, then, up to selection of another sub-sequence
\[
\Theta [\Lambda^{\eps, \delta}]w^{\varepsilon, \delta}\stackrel{\e\rightarrow 0_+
}{\longrightarrow}   F^{\delta}(y,t) \cdot \nabla_\eta \mu^\delta \quad
\text{in $L^\infty([0,T]; \mathcal S'(\R^n_y \times \R^n_\eta)) \, 
{\rm weak}^\ast$},
\]
where the semi-classcial force is defined by 
\[
F^{\delta}(y,t):= - \int_{\R^d} \nabla_y V(x,y) |\psi^{\delta}(x,t)|^2\, dx.
\]
\end{proposition}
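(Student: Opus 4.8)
The plan is to split the operator $\Theta[\Lambda^{\eps,\delta}] = \Theta[\mathcal V^{\eps,\delta}]$ into a ``frozen-potential'' piece, to which the classical results of \cite{GMMP, LiPa} apply directly, plus an error term controlled by Proposition \ref{prop:potapprox}. Concretely, fix $T>0$ and define $\mathcal V^{\delta}(y,t):=\langle \psi^{\delta}(\cdot,t), V(\cdot,y)\psi^{\delta}(\cdot,t)\rangle_{L^2_x}$, where $\psi^{\delta}$ is the solution of the limiting equation from Proposition \ref{prop:potapprox}. Note $F^{\delta}(y,t)=-\nabla_y\mathcal V^{\delta}(y,t)$, since differentiating under the integral sign is legitimate by $V\in\mathcal S$ and $\psi^{\delta}\in C(\R_t;H^1)$. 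I would then write, for any test function $\chi\in\mathcal S(\R^n_y\times\R^n_\eta)$,
\[
\langle \Theta[\mathcal V^{\eps,\delta}]w^{\eps,\delta}, \chi\rangle
= \langle \Theta[\mathcal V^{\delta}]w^{\eps,\delta}, \chi\rangle
+ \langle (\Theta[\mathcal V^{\eps,\delta}]-\Theta[\mathcal V^{\delta}])w^{\eps,\delta}, \chi\rangle.
\]

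For the first term, $\mathcal V^{\delta}$ is an $\eps$-independent potential in $C(\R_t;\mathcal S(\R^n_y))$, so the standard Wigner-calculus results (the convergence $\delta\mathcal V^{\delta,\eps}\to z\cdot\nabla_y\mathcal V^{\delta}$ recalled in Section \ref{sec:Wigner}, together with weak$^\ast$ compactness of $\{w^{\eps,\delta}\}$) give, along a further subsequence,
\[
\Theta[\mathcal V^{\delta}]w^{\eps,\delta} \stackrel{\eps\to 0_+}{\longrightarrow} -\nabla_y\mathcal V^{\delta}(y,t)\cdot\nabla_\eta\mu^{\delta} = F^{\delta}(y,t)\cdot\nabla_\eta\mu^{\delta}
\]
in $L^\infty([0,T];\mathcal S'(\R^n_y\times\R^n_\eta))$ weak$^\ast$; this is exactly the time-dependent analogue of \eqref{eq:liouville} applied to the $y$-subsystem with a given smooth potential. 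The one point requiring a line of care is that the potential is time-dependent, but since $\mathcal V^{\delta}\in C(\R_t;\mathcal S)$ with all seminorms locally bounded in $t$, the dominated-convergence argument underlying the Wigner-calculus limit goes through uniformly on $[0,T]$.

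The heart of the matter is to show the second term vanishes. Expanding the pseudodifferential operator and integrating the oscillatory factor against $\chi$, one is led to estimate an expression of the form $\frac1\eps\big[(\mathcal V^{\eps,\delta}-\mathcal V^{\delta})(y+\tfrac\eps2 z)-(\mathcal V^{\eps,\delta}-\mathcal V^{\delta})(y-\tfrac\eps2 z)\big]$ paired with $w^{\eps,\delta}$ and (Fourier-dual) derivatives of $\chi$. The difficulty is the explicit factor $1/\eps$: a crude $L^\infty$ bound on $\mathcal V^{\eps,\delta}-\mathcal V^{\delta}$ is not enough. The remedy is to use the gradient: writing $g^{\eps,\delta}:=\mathcal V^{\eps,\delta}-\mathcal V^{\delta}$, one has $\frac1\eps\big(g^{\eps,\delta}(y+\tfrac\eps2 z)-g^{\eps,\delta}(y-\tfrac\eps2 z)\big)=\int_{-1/2}^{1/2} z\cdot\nabla_y g^{\eps,\delta}(y+\eps\sigma z)\,d\sigma$, so the prefactor is $z$ times an average of $\nabla_y g^{\eps,\delta}$, and it suffices to show $\|\nabla_y g^{\eps,\delta}(\cdot,t)\|_{L^\infty}\to 0$ uniformly on $[0,T]$. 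Now
\[
\nabla_y g^{\eps,\delta}(y,t)=\langle \psi^{\eps,\delta}, (\nabla_y V)\psi^{\eps,\delta}\rangle_{L^2_x}-\langle \psi^{\delta}, (\nabla_y V)\psi^{\delta}\rangle_{L^2_x},
\]
which by Cauchy--Schwarz and $\|\psi^{\eps,\delta}\|_{L^2_x}=\|\psi^{\delta}\|_{L^2_x}=1$ is bounded by $2\|\nabla_y V\|_{L^\infty_{x,y}}\,\|\psi^{\eps,\delta}(\cdot,t)-\psi^{\delta}(\cdot,t)\|_{L^2_x}$, and Proposition \ref{prop:potapprox} makes this $\to 0$ uniformly for $t\in[0,T]$. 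With this uniform decay in hand, the weight $|z|$ is absorbed by the rapid decay of the (Fourier transform of the) test function $\chi$, the $w^{\eps,\delta}$-factor is controlled by its $L^2$ bound (equivalently, pairing against $\mathcal S$ test functions the Wigner transforms are uniformly bounded functionals), and dominated convergence finishes the estimate. Taking the product of the two subsequences (the one from weak$^\ast$ compactness of $w^{\eps,\delta}$ and the one needed so that $\mu^{\delta}$ is well-defined) yields the claimed convergence; this is the ``selection of another sub-sequence'' in the statement. The main obstacle, as indicated, is purely the $1/\eps$ singular prefactor, which is defused by trading it for a gradient and invoking the $L^2$-convergence of wave functions from Proposition \ref{prop:potapprox}.
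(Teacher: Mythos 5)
Your proposal is correct, and its two essential ingredients are exactly those of the paper's proof: the strong $L^2_x$-convergence of Proposition \ref{prop:potapprox} upgraded (via $\| \psi^{\eps,\delta}\|_{L^2_x}=\|\psi^\delta\|_{L^2_x}=1$ and boundedness of $V$ and its derivatives) to convergence of the self-consistent force, the replacement of the singular difference quotient $\frac{1}{\e}\bigl(\cdot(y+\tfrac\e2 z)-\cdot(y-\tfrac\e2 z)\bigr)$ by $z$ times an averaged $y$-gradient, and the pairing of the resulting symbol against test functions whose $\eta$-Fourier transform is well localized, combined with the weak$^\ast$ convergence and uniform boundedness of $w^{\eps,\delta}$. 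Where you differ is in the organization: the paper treats the full self-consistent symbol in one shot, proving pointwise convergence and equicontinuity of $F^{\eps,\delta}=-\nabla_y\mathcal V^{\eps,\delta}$ and invoking Arzel\`a--Ascoli to extract a subsequence along which $F^{\eps,\delta}$ converges locally uniformly, and then passes to the limit in $\Xi^{\eps,\delta}$ directly; you instead split $\Theta[\mathcal V^{\eps,\delta}]=\Theta[\mathcal V^{\delta}]+\bigl(\Theta[\mathcal V^{\eps,\delta}]-\Theta[\mathcal V^{\delta}]\bigr)$, delegate the frozen-potential term to the linear Wigner calculus of \cite{GMMP,LiPa} (with the mild extra remark about time dependence of $\mathcal V^\delta$), and kill the commutator-type error with the quantitative bound $\|\nabla_y(\mathcal V^{\eps,\delta}-\mathcal V^{\delta})(\cdot,t)\|_{L^\infty}\le 2\|\nabla_yV\|_{L^\infty}\|\psi^{\eps,\delta}(\cdot,t)-\psi^{\delta}(\cdot,t)\|_{L^2_x}$. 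Your route buys a cleaner bookkeeping of subsequences (the error term vanishes along the full family, and no Arzel\`a--Ascoli extraction is needed, since the uniform-in-$y$ estimate makes the convergence of the force automatically uniform --- a simplification which is in fact already implicit in the paper's own estimate $|\mathcal V^{\eps,\delta}-\mathcal V^{\delta}|\le 2\|V\|_{L^\infty}\|\psi^{\eps,\delta}-\psi^{\delta}\|_{L^2_x}$), at the cost of citing the linear theory for a time-dependent potential rather than re-deriving the symbol limit; the paper's one-shot argument is self-contained and emphasizes that the nonlinear (self-consistent) structure causes no extra difficulty beyond the linear case. The only place where your sketch is looser than it should be is the final pairing: uniform boundedness of $w^{\eps,\delta}$ in $\mathcal S'$ alone does not suffice, one should (as you in fact indicate) use the explicit representation of the pairing through $\varphi^{\eps,\delta}(y+\tfrac\e2z)\overline{\varphi^{\eps,\delta}}(y-\tfrac\e2z)$, Cauchy--Schwarz in $y$, and integrability of $|z|\sup_y|\widetilde\chi(y,z)|$, which is precisely the norm in which the error symbol tends to zero.
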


\begin{proof} 
Denote $\mathcal V^{\delta}(y,t)=\langle \psi^{\delta}(\cdot, t),V(\cdot, y) \psi^{ \delta}(\cdot, t) \rangle_{L_x^{2}}$. Then, we can estimate 
\begin{align*}
|\mathcal V^{\eps, \delta}(y,t) - \mathcal V^{0, \delta}(y,t)|\le & \ \| V \|_{L^\infty} \int_{\R^d} \big| |\psi^{\eps,\delta}(x,t)|^2 - | \psi^{\delta}(x,t)|^2 \big| dx \\
\le & \ 2  \ \| V \|_{L^\infty} \| \psi^{\eps,\delta}(\cdot,t) -  \psi^{\delta}(\cdot, t)\|_{L_x^2},
\end{align*}
where in the second inequality we have used the Cauchy-Schwarz inequality together with the fact that $||a|^2-|b|^2| \le |a-b| (|a|+|b|)$ for any $a,b\in \C$. 
The strong $L^2$-convergence of $\psi^{\eps,\delta}$ stated in Proposition \ref{prop:potapprox} therefore implies 
\[
\mathcal V^{\eps, \delta}(y,t) \stackrel{\e\rightarrow 0_+
}{\longrightarrow}  \mathcal V^{0, \delta}(y,t) \equiv \langle \psi^{\delta},V \psi^{\delta} \rangle_{L_x^{2}}(y,t),
\]
point-wise. Analogously we infer point-wise convergence of $\nabla_y \mathcal V^{\eps, \delta} \to \nabla_y \mathcal V^{\delta}$.

Next, we note that $\mathcal V^{\eps, \delta}$ is uniformly bounded in $\e$, since, as before,
\[
| \mathcal V^{\eps, \delta}(y,t) | \le \sup_{x,y} |V(x,y)| \, \| \psi^{\eps, \delta} (\cdot, t) \|^2_{L_x^2} \le \| V \|_{L^\infty},
\] 
having in mind that $\| \psi^{\eps, \delta} (\cdot, t) \|_{L_x^2} =1$, $\forall \, t\in \R$. Since $V\in \mathcal S(\R^d_x\times \R^n_y)$ the same argument also applies to $\nabla_y \mathcal V^{\eps, \delta}$. 
Moreover, by using the Mean-Value Theorem, we can estimate
\[
| \nabla_y \mathcal V^{\eps, \delta}(y_1,t) - \nabla_y \mathcal V^{\eps, \delta}(y_2,t) |  \le |y_1 - y_2| \sup_{x,y} |D^2 V| \|\psi^{\eps, \delta}(\cdot, t) \|_{L^2_x} \le C |y_1 - y_2| .
\]
This shows that $F^{\eps, \delta}:=-\nabla_y \mathcal V^{\eps, \delta}$ is equicontinuous in $y$, 
and hence the Arzela-Ascoli Theorem guarantees that there exists a subsequence, such that $F^{\eps, \delta}$ converges, as $\eps \to 0_+$, 
uniformly on compact sets in $y,t$. 

Now, let $\chi\in \mathcal S(\R^n_y \times \R^n_\eta)$ be a test-function with the property that its Fourier transform with respect to $\eta$, i.e.
\[
(\mathcal F_{\eta\to z} \chi)(y,z) \equiv\widetilde \chi (y,z)= \int_{\R^n} \chi(y,\eta) e^{-i \eta\cdot z} \, d\eta,
\]
has compact support with respect to both $y$ and $z$. This kind of test functions are dense in $\mathcal S(\R^n_y \times \R^n_\eta)$ and hence it suffices to show the 
assertion for the $\chi$ only. Multiplying by $\chi$ and integrating allows one to write
\[
 \langle \Theta [\Lambda^{\eps, \delta}]w^{\varepsilon, \delta}, \chi \rangle  =   - \langle \, w^{\varepsilon, \delta}, \Xi^{\eps, \delta} \rangle_{\mathcal S', \mathcal S},
\] 
where
\[
\Xi^{\eps, \delta} (y,\eta)=\frac{i}{(2\pi )^n } \int_{\R^n} \widetilde \chi(y,z) e^{i z\cdot \eta} \frac{1}{\e} \left(
\mathcal V^{\eps, \delta}\big (y+
\frac{\e}{2} z, t\big)
-\mathcal V^{\eps, \delta}\big (y-
\frac{\e}{2} z, t\big) \right) \, dz \in \mathcal S.
\]
Since, $\widetilde \chi$ has compact support the uniform convergence of $F^{\e, \delta}$ allows us to conclude
\[
\Xi^{\eps, \delta} \stackrel{\e\rightarrow 0_+ }{\longrightarrow}  i \nabla_y \mathcal V^{\delta} (y,t) \cdot \mathcal F_{z\to \eta}^{-1}( z \widetilde \chi(y,z))(y, \eta) \equiv 
F^\delta(y,t) \cdot \nabla_\eta \chi(y,\eta) .
\]
\end{proof}

\begin{remark}
One should note that, even though $\Lambda^{\eps, \delta}$ is a self-consistent potential, depending nonlinearly upon the solution $\psi^{\eps, \delta}$, the 
convergence proof given above is very similar to the linear case \cite{LiPa}, due to the particular structure of the nonlinearity. In particular, we do not require 
to pass to the mixed state formulation which is needed to establish the classical limit in other self-consistent quantum dynamical models, as for example in \cite{MaMa}. 
\end{remark}

In summary, this leads to the first main result of our work, which shows that the solution to \eqref{eq:TDSCF}, as $\eps \to 0_+$ (and with $\delta =O(1)$ fixed) 
converges to a mixed quantum-classical system, consisting of a Schr\"odinger equation for the $x$-variables and a classical Liouville equation for the $y$-variables.

\begin{theorem} \label{thm:mixed}
Let $V\in \mathcal S(\R^d_x\times \R^n_y)$ and $\psi^{\eps, \delta}$, $\varphi^{\eps, \delta}\in C(\R_t; H^1(\R^d_x))$, be solutions to the TDSCF system 
\eqref{eq:TDSCF} with uniformly bounded initial mass and energy, i.e, $M^{\eps, \delta}(0)\le C_1$, $E^{\eps, \delta}(0)\le C_2$. Then, for any $T>0$, it holds
\[
\psi^{\eps, \delta} \stackrel{\e\rightarrow 0_+
}{\longrightarrow} \psi^\delta \quad
\text{in $L^\infty([0,T]; L_y^2(\R^n)) $},
\]
and
\[
w^\eps [\varphi^{\e, \delta}] \stackrel{\e\rightarrow 0_+
}{\longrightarrow}  \mu^\delta \quad
\text{in $L^\infty([0,T]; \mathcal S'(\R^n_y \times \R^n_\eta)) \, 
{\rm weak}^\ast$},
\]
where $\psi^\delta\in C(\R_t; L^2(\R^d))$ and $\mu^\delta\in C(\R_t; \mathcal M^+(\R_y^n\times \R^n_\eta))$ solve the following mixed quantum-classical system
\begin{equation}\label{eq:mixed}
\left\{ \begin{split}
& i \delta \partial_t \psi^{\delta}  = \left (-\frac{\delta^2}{2}\Delta_x  + \Upsilon^{\delta} (x,t) \right) \psi^{\delta} \, ,
\quad \psi^{\delta} _{\mid t=0} = \psi^\delta_{\rm in}(x),\\
& \partial_t \mu^{\delta} + \diver_y( \eta \mu^{\delta}) + \diver_\eta (F^{\delta}(y,t) \mu^{\delta}) = 0 \, ,
\quad \mu^\delta_{\mid t=0} = \mu_{\rm in}(y, \eta).
\end{split}\right.
\end{equation}
Here $\mu_{\rm in}$ is the initial Wigner measure obtained as the weak$^\ast$ limit of $w^\eps[\varphi^\eps_{\rm in }]$ and
\[
\Upsilon^{\delta}(x,t)  = \iint_{\R^{2n}} V(x,y) \mu^\delta( dy, d\eta, t), \quad
F^{\delta}(y,t)=- \int_{\R^d} \nabla_y V(x,y) |\psi^{\delta}(x,t)|^2\, dx.
\]
\end{theorem}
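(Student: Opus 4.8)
The plan is to assemble the three ingredients already in place---the a priori bounds and existence result of Section~\ref{sec:prelim}, the stability estimate of Proposition~\ref{prop:potapprox} for the Schr\"odinger component, and the weak$^\ast$ limit of the self-consistent pseudo-differential operator from Proposition~\ref{prop:wignerpotapprox} for the classical component---then to pass to the limit in the Wigner-transformed equation \eqref{wignereq}, and finally to upgrade the resulting subsequential convergence to convergence of the whole family $\eps\to0_+$ by a uniqueness argument for the limit system \eqref{eq:mixed}. First I would record the compactness input: from $M^{\eps,\delta}(0)\le C_1$, $E^{\eps,\delta}(0)\le C_2$, the conservation laws of Lemmas~\ref{lem:mass} and \ref{lem:energy}, and $V\ge0$, one obtains $\sup_{0<\eps\le1}\bigl(\|\varphi^{\eps,\delta}(\cdot,t)\|_{L^2_y}+\|\eps\nabla\varphi^{\eps,\delta}(\cdot,t)\|_{L^2_y}\bigr)\le C(T)$ on $[0,T]$, so $\varphi^{\eps,\delta}$ is $\eps$-oscillatory; hence along a subsequence $\eps_n\to0_+$ one has $w^{\eps_n}[\varphi^{\eps_n,\delta}]\to\mu^\delta$ weak$^\ast$ in $L^\infty([0,T];\mathcal S'(\R^n_y\times\R^n_\eta))$ with $\mu^\delta\in\mathcal M^+$, and $|\varphi^{\eps_n,\delta}(\cdot,t)|^2\to\int_{\R^n_\eta}\mu^\delta(\cdot,d\eta,t)$ weak$^\ast$. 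Since $V\in\mathcal S$ this gives the pointwise convergence $\Upsilon^{\eps_n,\delta}(x,t)\to\Upsilon^\delta(x,t)=\iint V\,\mu^\delta(dy,d\eta,t)$, together with the $\eps$-uniform bound \eqref{eq:Ubound}.

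Next I would let $\psi^\delta\in C(\R_t;H^1(\R^d_x))$ be the solution of $i\delta\partial_t\psi^\delta=\bigl(-\tfrac{\delta^2}{2}\Delta_x+\Upsilon^\delta\bigr)\psi^\delta$, $\psi^\delta_{\mid t=0}=\psi^\delta_{\rm in}$; this is now a \emph{linear} Schr\"odinger equation with the given smooth potential $\Upsilon^\delta\in C(\R_t;\mathcal S(\R^d_x))$, so existence and uniqueness follow as in Proposition~\ref{prop:existence}. Because the first equation of \eqref{eq:TDSCF} is exactly the equation for $\psi^{\eps,\delta}$ treated in Proposition~\ref{prop:potapprox}, and its hypothesis ($\Upsilon^{\eps_n,\delta}\to\Upsilon^\delta$ pointwise, uniformly bounded) has just been verified, that proposition yields $\psi^{\eps_n,\delta}\to\psi^\delta$ in $L^\infty([0,T];L^2_x)$. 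With this in hand, Proposition~\ref{prop:wignerpotapprox} applies---after passing, if needed, to a further subsequence (still denoted $\eps_n$) for the Arzel\`a--Ascoli step used there---and gives $\Theta[\Lambda^{\eps_n,\delta}]w^{\eps_n,\delta}\to F^\delta\cdot\nabla_\eta\mu^\delta=\diver_\eta(F^\delta\mu^\delta)$ weak$^\ast$, with $F^\delta(y,t)=-\int_{\R^d}\nabla_y V(x,y)\,|\psi^\delta(x,t)|^2\,dx$ (note $\diver_\eta F^\delta=0$). Passing to the limit in \eqref{wignereq}, the transport term tends to $\eta\cdot\nabla_y\mu^\delta=\diver_y(\eta\mu^\delta)$ by weak$^\ast$ continuity and $\partial_t w^{\eps_n,\delta}\to\partial_t\mu^\delta$ in $\mathcal D'$; hence $\mu^\delta$ solves the Liouville equation in \eqref{eq:mixed} in $\mathcal D'(\R^n_y\times\R^n_\eta\times(0,T))$ with $\mu^\delta_{\mid t=0}=\mu_{\rm in}$. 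Together with the previous paragraph, $(\psi^\delta,\mu^\delta)$ solves \eqref{eq:mixed}; moreover $F^\delta\in C(\R_t;W^{1,\infty}(\R^n_y))$ because $V\in\mathcal S$ and $\psi^\delta\in C(\R_t;L^2_x)$, so the characteristics representation recalled at the end of Section~\ref{sec:Wigner} promotes this to $\mu^\delta\in C(\R_t;\mathcal M^+(\R^n_y\times\R^n_\eta))$.

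It remains to show the limit does not depend on the subsequence; this follows from uniqueness for \eqref{eq:mixed}, together with the (standard) assumption that $\mu_{\rm in}$ is the \emph{unique} weak$^\ast$ limit of $w^\eps[\varphi^\eps_{\rm in}]$. Given two solutions $(\psi^\delta_j,\mu^\delta_j)$, $j=1,2$, with the same data, one estimates $\|\psi^\delta_1(t)-\psi^\delta_2(t)\|_{L^2_x}\le\int_0^t\|(\Upsilon^\delta_1-\Upsilon^\delta_2)(s)\|_{L^\infty_x}\,ds$ exactly as in the proof of Proposition~\ref{prop:potapprox}; since $V\in\mathcal S$, $\|(\Upsilon^\delta_1-\Upsilon^\delta_2)(t)\|_{L^\infty_x}$ is controlled by a Wasserstein-$1$ distance $d_1(\mu^\delta_1(t),\mu^\delta_2(t))$---the relevant first moments being finite thanks to the energy bound---and $\|(F^\delta_1-F^\delta_2)(t)\|_{W^{1,\infty}_y}\lesssim\|\psi^\delta_1(t)-\psi^\delta_2(t)\|_{L^2_x}$; finally, writing $\mu^\delta_j(t)$ as the pushforward of $\mu_{\rm in}$ by the Hamiltonian flow of $(\dot y,\dot\eta)=(\eta,F^\delta_j(y,t))$ and using Gronwall on these ODEs, one bounds $d_1(\mu^\delta_1(t),\mu^\delta_2(t))$ by $\int_0^t\|(F^\delta_1-F^\delta_2)(s)\|_{W^{1,\infty}_y}\,ds$. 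Closing this chain by Gronwall gives $\psi^\delta_1\equiv\psi^\delta_2$ and $\mu^\delta_1\equiv\mu^\delta_2$, so every subsequence above produces the same $(\psi^\delta,\mu^\delta)$, and the two stated convergences hold for the whole family $\eps\to0_+$.

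Within this argument the genuinely nonlinear difficulty---passing to the limit in the self-consistent potential inside the pseudo-differential operator $\Theta[\Lambda^{\eps,\delta}]$---has already been isolated and handled in Proposition~\ref{prop:wignerpotapprox}, so I expect the main remaining obstacle to be the uniqueness step for the coupled limit system \eqref{eq:mixed}: it requires quantifying the mutual dependence of $\psi^\delta$ on $\mu^\delta$ (through $\Upsilon^\delta$) and of $\mu^\delta$ on $\psi^\delta$ (through $F^\delta$) in compatible metrics and closing a Gronwall loop. A minor bookkeeping point is the nesting of subsequence extractions (first for $\mu^\delta$, then for $F^{\eps,\delta}$ via Arzel\`a--Ascoli), which is harmless since each stage only refines an already convergent subsequence.
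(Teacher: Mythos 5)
Your proposal is correct and follows essentially the same route as the paper: the paper's proof of Theorem \ref{thm:mixed} simply combines Proposition \ref{prop:potapprox} and Proposition \ref{prop:wignerpotapprox} with the standard Wigner-measure machinery of Lions--Paul and G\'erard et al., which is exactly your assembly of the a priori bounds, the stability of the Schr\"odinger component, and the limit of $\Theta[\Lambda^{\eps,\delta}]w^{\eps,\delta}$, followed by passage to the limit in \eqref{wignereq}. Your final uniqueness/Gronwall loop for the coupled limit system (and the resulting whole-family convergence, assuming a unique initial measure $\mu_{\rm in}$) is precisely the content the paper delegates to the arguments of \cite[Theorem IV.1]{LiPa}, so it is a legitimate filling-in rather than a different approach; only a minor caveat is that for the metric step one should use a bounded-Lipschitz (or similarly moment-free) distance rather than Wasserstein-$1$ unless first moments of $\mu^\delta$ are first established.
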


\begin{proof}
In view of Proposition \ref{prop:potapprox} and Proposition \ref{prop:wignerpotapprox}, the result follows directly from the Wigner measure techniques 
established in \cite{GMMP, LiPa}. In particular the continuity in-time of the Wigner measure $\mu^\delta$ can be inferred by following the arguments of 
\cite[Theorem IV.1]{LiPa}.
\end{proof}

\begin{remark}
It is possible to obtain slightly stronger convergence results with respect to time, by first proving that 
$\partial_t w^{\varepsilon, \delta}$ is bounded in $L^\infty((0,T),\mathcal S'(\R^n_y\times \R^n_\eta))$, which consequently implies time-equicontinuity of $w^{\e, \delta}$, see, e.g., \cite{AMS}.
\end{remark}

\subsection{Connection to the Ehrenfest method}\label{sec:ehrenfest} 
The fact that $F^\delta\in C(\R_t;\mathcal S(\R^d))$ allows us to introduce a smooth globally defined Hamiltonian flow 
$\Phi^\delta_t:\R^{2n}\to \R^{2n}$ induced by:
\begin{equation*}
\left\{ \begin{split}
&\dot y(t) = \eta(t), \quad y(0)= y_0\in \R^n,\\
&\dot \eta(t) =  F^{\delta}(y(t),t), \quad \eta(0)=\eta_0\in \R^n.
\end{split}\right.
\end{equation*}
In particular we have, for any test-function $\chi\in \mathcal S(\R^n_y \times \R^n_\eta)$, the push-forward formula
\[
\iint_{\R^{2n} } \chi(y,\eta) \mu^\delta(y,\eta, t) \, dy\, d\eta = \iint_{\R^{2n}}  \chi(\Phi^\delta_{t}(y,\eta)) \mu_{\rm in}(dy,d\eta).
\]
In particular, if initially $\mu_0(y, \eta)=\delta(y-y_0, \eta - \eta_0)$, i.e. a delta distribution centered at $(y_0, \eta_0)\in \R^{2n}$, this yields
$\mu^\delta(y,\eta, t)= \delta(y-y(t), \eta-\eta(t))$, for all times $t\in \R$. 
Such kind of Wigner measures can be obtained as the classical limit of a particular type of wave functions, called
semi-classical wave packets, or coherent states, see \cite{LiPa}. In this case, we also find
\[
\Upsilon^{\delta}(x,t) := \iint_{\R^{2n}} V(x,y) \mu^\delta( dy, d\eta, t) = V(x,y(t)),
\]
and the mixed quantum-classical system becomes 
\begin{equation}\label{eq:ehrenfest}
\left\{ \begin{split}
& i \delta \partial_t \psi^{\delta}  = \left (-\frac{\delta^2}{2}\Delta_x  + V(x,y(t)) \right) \psi^{\delta} \, ,
\quad \psi^{\delta} _{\mid t=0} = \psi^\delta_{\rm in}(x),\\
&\ddot y(t) = - \int_{\R^d} \nabla_y V(x,y(t)) |\psi^{\delta}(x,t)|^2\, dx, \quad y_{\mid t=0} =y_0, \ \dot{y}_{\mid t=0} =\eta_0,
\end{split}\right.
\end{equation}
with $y_0, \eta_0 \in \R^n$.
This is a well-known model in the physics and quantum chemistry literature, usually referred to as {\it Ehrenfest method}. It has been studied in, e.g, \cite{BS, BNS} in the context of 
quantum molecular dynamics.

\begin{remark}
A closely related scaling-limit is obtained in the case where the time-derivatives in both equations of \eqref{eq:TDSCF} are scaled by the same factor $\eps$. At least formally, this leads 
to an Ehrenfest-type model similar to \eqref{eq:ehrenfest}, but with a stationary instead of a time-dependent Schr\"odinger equation, cf. \cite{BNS, Dr}. 
In this case, connections to the Born-Oppenheimer approximation 
of quantum molecular dynamics become apparent, see, e.g., \cite{SpTe}. From the mathematical point of view this scaling regime combines 
the classical limit for the subsystem described by the $y$-variables with a {\it time-adiabatic limit} for the subsystem described in $x$. However, due to the nonlinear coupling within the TDSCF system \eqref{eq:TDSCF} this scaling limit is highly nontrivial and will be the main focus of a future work.
\end{remark}


\section{The fully classical limit}\label{sec:classical}

In order to get a better understanding (in particular for the expected numerical treatment of our model), we will now turn to the question of 
how to obtain a completely classical approximation for the system \eqref{eq:TDSCF}. There are at least two possible ways to do so.
One is to consider the limit $\delta \to 0_+$ in the obtained mixed quantum-classical system \eqref{eq:mixed}, which in itself corresponds to 
the iterated limit $\eps \to 0_+$ and then $\delta \to 0_+$ of \eqref{eq:TDSCF}. Another possibility is to take $\eps = \delta \to 0_+$ in \eqref{eq:TDSCF}, which 
corresponds to a kind of ``diagonal limit" in the $\eps, \delta$ parameter space.

\subsection{The classical limit of the mixed quantum-classical system} In this section we shall perform the limit $\delta\to 0_+$ of the 
obtained mixed quantum-classical system \eqref{eq:mixed}. To this end, we first introduce the $\delta$-scaled Wigner transform of $\psi^\delta$: 
\[
W^\delta [\psi^{ \delta}](x,\xi, t): = \frac{1}{(2\pi)^d} \int_{\R^d}
\psi^{\delta}\left(x-\frac{\e}{2}z , t\right)\overline{\psi^{ \delta}} \left(x+\frac{\e}{2}z, t \right)e^{i z \cdot \xi}\,  dz. 
\]
The results of Lemma \ref{lem:mass} and Lemma \ref{lem:energy} imply that $\psi^\delta$ is a family of $\delta$-oscillatory functions, i.e, 
\begin{equation}\label{eq:energy_psi}
\sup_{0< \delta \le 1} (\| \psi^{ \delta} (t,\cdot) \|_{L^2_y} + \| \delta \nabla \psi^{ \delta} (t,\cdot) \|_{L^2_y}) \le C(t) 
\end{equation}
and thus there exists a limiting measure $\nu\in \mathcal M^+(\R^d_x\times \R^d_\xi)$, such that 
\[
W^\delta[\psi^\delta] \stackrel{\delta\rightarrow 0_+
}{\longrightarrow}  \nu \quad \text{in $L^\infty(\R_t;\mathcal S'(\R^d_x \times \R^d_\xi)) \, {\rm weak}^\ast$.}
\]
Moreover, since $V\in \mathcal S(\R^{n+d})$ we also have
\begin{align*}
F^{\delta}(y,t)= &- \int_{\R^d} \nabla_y V(x,y) |\psi^{\delta}(x,t)|^2\, dx =-\iint_{\R_{x,\xi}^{2n}}\nabla_y V(x,y) W^\delta( dx, d\xi, t)  \\
& \stackrel{\e\rightarrow 0_+}{\longrightarrow}  -\iint_{\R_{x,\xi}^{2n}}\nabla_y V(x,y) \nu( dx, d\xi, t) \equiv F(y,t),
\end{align*}
point-wise, for all $t\in \R$. By using the same arguments as before (see in particular the proof of Proposition \ref{prop:wignerpotapprox}) we infer that $F^\delta$ is uniformly bounded and 
equicontinuous in $y$, and hence, up to extraction of possibly another sub-sequence, $F^{\delta}$ converges, as $\delta \to 0_+$ 
uniformly on compact sets in $y,t$. 

With the results above, we prove in the following proposition the convergence of the Wigner measure $\mu^\delta$ as $\delta\rightarrow 0_+$.

\begin{proposition}\label{prop:deltawigner} Let $\mu^\delta\in C(\R_t; \mathcal M^+(\R^n_y\times\R^n_\eta))$ be a distributional solution of 
\[\partial_t \mu^{\delta} + \diver_y( \eta  \mu^{\delta} )+\diver_\eta (F^{\delta}(y,t) \mu^{\delta}) = 0,\]
and $\mu\in C(\R_t; \mathcal M^+(\R^n_y\times\R^n_\eta))$ be a distributional solution of 
\[\partial_t \mu+ \diver_y(\eta \mu)+\diver_\eta (F(y,t) \mu )= 0,\]
such that initially $\mu^\delta_{\mid t=0} = \mu _{\mid t=0}$, then 
\[
\mu^\delta \stackrel{\delta\rightarrow 0_+
}{\longrightarrow}  \mu \quad \text{in $L^\infty([0,T];\mathcal M^+(\R^n_y \times \R^n_\eta)) \, {\rm weak}^\ast$.}
\]
\end{proposition}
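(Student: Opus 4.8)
The plan is to represent both $\mu^\delta$ and $\mu$ as the push-forwards of the common initial measure $\mu_{\rm in}$ along the Hamiltonian flows generated by $F^\delta$ and $F$ respectively, and then to deduce the convergence from the stability of these flows under the locally uniform convergence $F^\delta\to F$ established just before the statement. \textbf{Step 1 (the limiting force and its characteristics).} As recorded above, $F^\delta\in C(\R_t;\mathcal S(\R^n_y))$ is uniformly bounded, $|F^\delta(y,t)|\le \sup|\nabla_y V|$, and uniformly Lipschitz in $y$, with a Lipschitz constant depending only on $\sup|D^2V|$ and the conserved mass $\|\psi^\delta(\cdot,t)\|_{L^2_x}=1$; moreover, along a subsequence, $F^\delta\to F$ uniformly on compact subsets of $\R^n_y\times\R_t$. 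Passing to the limit, $F$ inherits the same uniform bound and Lipschitz constant and, being a locally uniform limit of continuous functions, is continuous on $\R^n_y\times\R_t$. Hence the two ODE systems $\dot y=\eta$, $\dot\eta=F^\delta(y,t)$ and $\dot y=\eta$, $\dot\eta=F(y,t)$ have right-hand sides that are continuous in $t$ and globally Lipschitz in $(y,\eta)$, and since $|\eta(t)|$ grows at most linearly and $|y(t)|$ at most quadratically (the forces being bounded) there is no blow-up; thus they generate unique global flows $\Phi^\delta_t,\Phi_t:\R^{2n}\to\R^{2n}$.

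\textbf{Step 2 (stability of the flows).} Fix $T>0$ and a compact $K\subset\R^{2n}$. Since the forces are bounded by a $\delta$-independent constant, all trajectories issued from $K$ on $[0,T]$ remain in a fixed compact set $K_T\supset K$. Writing $F^\delta(y^\delta,t)-F(y,t)=[F^\delta(y^\delta,t)-F(y^\delta,t)]+[F(y^\delta,t)-F(y,t)]$, using the Lipschitz bound on $F$ for the second bracket, and applying Gronwall's inequality, one obtains for all $t\in[0,T]$
\[
\sup_{(y_0,\eta_0)\in K}\bigl|\Phi^\delta_t(y_0,\eta_0)-\Phi_t(y_0,\eta_0)\bigr|\le C(T,K)\sup_{s\in[0,T],\ (y,\eta)\in K_T}\bigl|F^\delta(y,s)-F(y,s)\bigr|.
\]
The right-hand side tends to $0$ as $\delta\to0_+$ by Step~1, and since the Gronwall bound is nondecreasing in $t$ the convergence is uniform for $t\in[0,T]$.

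\textbf{Step 3 (push-forward representation and conclusion).} By the uniqueness of distributional solutions of the linear Liouville equation with a force that is continuous in time and Lipschitz in space --- exactly the argument used in Section~\ref{sec:Wigner} following \cite[Theorem IV.1]{LiPa} --- one has, for every $\chi\in\mathcal S(\R^n_y\times\R^n_\eta)$ and every $t\in[0,T]$,
\[
\iint_{\R^{2n}}\chi\,\mu^\delta(dy,d\eta,t)=\iint_{\R^{2n}}\chi(\Phi^\delta_t(y,\eta))\,\mu_{\rm in}(dy,d\eta),\qquad \iint_{\R^{2n}}\chi\,\mu(dy,d\eta,t)=\iint_{\R^{2n}}\chi(\Phi_t(y,\eta))\,\mu_{\rm in}(dy,d\eta).
\]
Given $\rho>0$, choose a compact $K$ with $\mu_{\rm in}(\R^{2n}\setminus K)<\rho$, which is possible since $\mu_{\rm in}$ is a finite measure. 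On $K$, $\chi\circ\Phi^\delta_t\to\chi\circ\Phi_t$ uniformly in $(y,\eta,t)\in K\times[0,T]$ by Step~2 and the uniform continuity of $\chi$ on the compact set $K_T$ (which contains $\Phi^\delta_t(K)$ and $\Phi_t(K)$ for all $\delta$ and all $t\in[0,T]$); off $K$ the integrand $\chi\circ\Phi^\delta_t-\chi\circ\Phi_t$ is bounded by $2\|\chi\|_{L^\infty}$, whose integral over $\R^{2n}\setminus K$ is at most $2\|\chi\|_{L^\infty}\rho$. Hence the first contribution to $\sup_{t\in[0,T]}\bigl|\iint\chi\,(\mu^\delta-\mu)(dy,d\eta,t)\bigr|$ tends to $0$ as $\delta\to0_+$ and the second is at most $2\|\chi\|_{L^\infty}\rho$; letting first $\delta\to0_+$ and then $\rho\to0_+$ gives the asserted weak-$\ast$ convergence $\mu^\delta\to\mu$ in $L^\infty([0,T];\mathcal M^+(\R^n_y\times\R^n_\eta))$.

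\textbf{Main obstacle.} The only nonroutine ingredient is the push-forward representation used in Step~3, i.e. uniqueness of measure-valued distributional solutions of the Liouville equation for the stated regularity of the force; since $F^\delta$ and $F$ are Lipschitz in space this is classical (method of characteristics, as in \cite{LiPa}) and has already been invoked in Section~\ref{sec:Wigner}. A purely ``dual'' alternative would be to test the equation satisfied by $\mu^\delta-\mu$ against $\chi$, control the defect term $\langle\mu^\delta,(F^\delta-F)\cdot\nabla_\eta\chi\rangle$ by the locally uniform convergence of the forces together with the uniformly bounded mass of $\mu^\delta$, and close a Gronwall estimate in a bounded-Lipschitz (Fortet--Mourier) metric; this works as well but is more cumbersome than the characteristics argument above.
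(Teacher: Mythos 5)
Your proof is correct, but it follows a genuinely different route from the paper's. The paper works directly with the difference $e^\delta:=\mu^\delta-\mu$, observes that it solves the inhomogeneous transport equation $\partial_t e^{\delta} + \diver_y( \eta e^{\delta}) +\diver_\eta( F^{\delta} e^{\delta}) =  \diver_\eta((F-F^{\delta}) \mu)$ with $e^\delta_{\mid t=0}=0$, shows that the source term vanishes as $\delta\to 0_+$ when tested against $\chi(y,\eta)\sigma(t)\in C_0^\infty$ (using the locally uniform convergence $F^\delta\to F$ and the finiteness of $\mu$), and concludes via a continuity/Duhamel argument in the weak formulation --- precisely the ``dual'' alternative you sketch in your closing remark. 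You instead invoke uniqueness of measure-valued solutions for the Liouville equation with a spatially Lipschitz, time-continuous force to represent both $\mu^\delta$ and $\mu$ as push-forwards of the common initial measure $\mu_{\rm in}$ along the flows $\Phi^\delta_t$ and $\Phi_t$, prove flow stability by a Gronwall estimate fed by the locally uniform convergence of the forces, and conclude by a tightness/splitting argument. Your approach buys an explicit, quantitative mechanism (flow stability) and yields convergence uniformly in $t\in[0,T]$ for each fixed test function, filling in details that the paper's final ``continuity argument based on Duhamel's formula'' leaves terse; its price is the extra ingredient of the push-forward representation, which, as you note, is classical for Lipschitz fields (here $F^\delta$ and $F$ are even smooth in $y$ with uniformly bounded derivatives, since $V\in\mathcal S$ and the masses are normalized) and is already used by the paper in Sections \ref{sec:Wigner} and \ref{sec:ehrenfest}. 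The paper's argument is shorter and avoids characteristics altogether, but is correspondingly sketchier at the final step. One cosmetic caveat: both your argument and the paper's test against smooth (Schwartz or compactly supported) functions; passing to the full weak$^\ast$ topology on $\mathcal M^+$ uses the uniform total-mass bound on $\mu^\delta(t)$ (preserved by the push-forward), which you may want to state explicitly.
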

\begin{proof}
We consider the difference $e^\delta:=\mu^\delta - \mu$. Then $e^\delta(y,\eta, t)$ solves (in the sense of distributions) the following inhomogeneous equation:
\[
\partial_t e^{\delta} + \diver_y( \eta e^{\delta}) +\diver_\eta( F^{\delta}(y,t)\ e^{\delta}) =  \diver_\eta((F(y,t)-F^{\delta}(y,t)) \mu),
\]
subject to $e^\delta_{\mid t=0}  = 0$.
For test-functions of the form $\chi(y, \eta)\sigma(t)\in C_0^\infty$, the inhomogeneity on the right hand side is given by
\begin{align*}
& \langle \chi , ( \diver_\eta((F(y,t)-F^{\delta}(y,t)) \mu)\rangle = \\
& = \int_0^T \sigma(t) \iint_{\R^{2n}} \nabla_\eta \chi(y,\eta) \cdot (F^\delta (y,t)-F(y,t)) \mu(dy, d\eta, t)\, dt.
\end{align*}
In view of the arguments above, this term goes to zero as $\delta \to 0_+$. 
But since $e^\delta(0,y,\eta)=0$, a continuity argument based on (the weak formulation of) Duhamel's formula 
then implies that $e^\delta(t,y,\eta)\to 0$ in $\mathcal M^+(\R^d_x \times \R^d_\xi)$ weak$^\ast$, as $\delta \to 0_+$, for all $t\in [0,T]$.
\end{proof}

By Wigner transforming the first equation in the mixed quantum-classical system \eqref{eq:mixed}, we find that $W^\delta[\psi^\delta]\equiv W^{\delta}$ satisfies
\[
\partial_t W^{\delta}  + \xi \cdot \nabla_x W^\delta + \Theta[\Upsilon^\delta]W^\delta = 0 \, ,
\quad W^{\delta}_{\mid t=0} = W^\delta[\psi^\delta_{\rm in}](x,\xi).
\]
To obtain the convergence of the term $\Theta[\Upsilon^\delta]W^\delta$, we note that with the convergence of the Wigner measure $\mu^\delta$, which is obtained in 
Proposition \ref{prop:deltawigner}, one gets
\begin{align*}
 \Upsilon^{\delta}(x,t)  & =  \iint_{\R^{2n}} V(x,y) \mu^\delta( dy, d\eta, t) \\
& \stackrel{\delta\rightarrow 0_+}{\longrightarrow}  \iint_{\R^{2n}} V(x,y) \mu( dy, d\eta, t)  \equiv \Upsilon^{}(x,t) 
\end{align*}
point-wise, for all $t\in \R$. Similar to previous cases, one concludes that, up to extraction of possibly another sub-sequence, $\Upsilon^{\delta}$ converges, as $\delta \to 0_+$, 
uniformly on compact sets in $x,t$. 

With the same techniques as in the proof of Proposition \ref{prop:wignerpotapprox}, one can then derive the equation for the associated Wigner measure $\nu$. 
The classical limit of the mixed quantum-classical system can thus be summarized as follows.

\begin{theorem} \label{thm:classical1}
Let $V\in \mathcal S(\R^{d+n})$, and $\psi^{\delta}\in C(\R_t; H^1(\R^d_x))$, $\mu^\delta\in C(\R_t; \mathcal M^+(\R^n_y\times\R^n_\eta))$ be solutions to the system 
\eqref{eq:mixed} with uniformly bounded initial mass. We also assume $\psi^{\delta}$ has uniformly bounded energy. Then, for any $T>0$, it holds that, the Wigner transform
\[
W^{\delta} \stackrel{\delta\rightarrow 0_+
}{\longrightarrow} \nu \quad
\text{in $L^\infty([0,T]; \mathcal S'(\R^d_x \times \R^d_\xi)) \, 
{\rm weak}^\ast$},
\]
and the Wigner measure
\[
\mu^{\delta} \stackrel{\delta\rightarrow 0_+
}{\longrightarrow}  \mu \quad
 \text{in $L^\infty([0,T];\mathcal M^+(\R^n_y \times \R^n_\eta)) \, {\rm weak}^\ast$.}
\]
where $\nu \in C(\R_t; \mathcal M^+(\R_x^d\times \R^d_\xi))$ and $\mu \in C(\R_t; \mathcal M^+(\R_y^n\times \R^n_\eta))$ solve the following coupled system of Vlasov-type equations
in the sense of distributions
\begin{equation}\label{classicalsystem}
\left\{ \begin{split}
& \partial_t \nu+ \diver_x(\xi  \nu) -\diver_\xi (\nabla_x \Upsilon(x,t) \nu )= 0 \, ,
\quad \nu _{\mid t=0} = \nu_{\rm in}(x,\xi),\\
& \partial_t \mu +\diver_y ( \eta \mu ) + \diver_\eta(F(y,t) \mu )= 0 \, ,
\quad \mu_{\mid t=0} = \mu_{\rm in}(y, \eta).
\end{split}\right.
\end{equation}
Here $\nu_{\rm in}$ is  the initial Wigner measure obtained as the weak$^\ast$ limit of $W^\delta[\psi^\delta_{\rm in }]$, and
\[
\Upsilon(x,t)  = \iint_{\R^{2n}} V(x,y) \mu( dy, d\eta, t), \quad
F(y,t)=-\iint_{\R^{2d}} \nabla_y V(x,y) \nu( dx, d\xi, t).
\]
\end{theorem}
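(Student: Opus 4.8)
The plan is to mirror the structure already established for Theorem \ref{thm:mixed} and Proposition \ref{prop:wignerpotapprox}, since the $\delta\to0_+$ limit of \eqref{eq:mixed} has exactly the same architecture as the $\eps\to0_+$ limit of \eqref{eq:TDSCF}: a Schr\"odinger equation with a self-consistent potential on one side, and a Vlasov/Liouville equation on the other. First I would record the a priori bounds \eqref{eq:energy_psi} (which follow from the conserved mass and energy of \eqref{eq:mixed}, themselves inherited from Lemmas \ref{lem:mass}--\ref{lem:energy} in the limit), so that $W^\delta[\psi^\delta]$ has a weak$^\ast$ limit $\nu\in\mathcal M^+$ along a subsequence and $|\psi^\delta(x,t)|^2\to\int_{\R^d}\nu(x,d\xi,t)$ by $\delta$-oscillation. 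This already gives the point-wise convergence of the force $F^\delta(y,t)\to F(y,t)$ displayed before the theorem, and the Mean-Value-Theorem argument from the proof of Proposition \ref{prop:wignerpotapprox} (applied now to $\nabla_y\mathcal V^\delta$, using $V\in\mathcal S$) upgrades this to uniform-on-compacts convergence via Arzel\`a--Ascoli.

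The second step is Proposition \ref{prop:deltawigner}, already proved in the excerpt, which I would simply invoke: it transfers the uniform convergence $F^\delta\to F$ into weak$^\ast$ convergence $\mu^\delta\to\mu$ in $L^\infty([0,T];\mathcal M^+)$, where $\mu$ solves the second equation in \eqref{classicalsystem}. From this and $V\in\mathcal S$ one gets point-wise convergence $\Upsilon^\delta(x,t)\to\Upsilon(x,t)$, and again equicontinuity in $x$ (bound $\nabla_x\Upsilon^\delta$ by $\|D^2V\|_{L^\infty}$ using $\|\mu^\delta(\cdot,\cdot,t)\|_{\mathcal M}=1$) promotes this to uniform convergence on compact sets. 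Then, writing the Wigner-transformed equation
\[
\partial_t W^\delta + \xi\cdot\nabla_x W^\delta + \Theta[\Upsilon^\delta]W^\delta = 0,
\]
I would repeat verbatim the argument of Proposition \ref{prop:wignerpotapprox}: test against $\chi\in\mathcal S$ whose $\xi$-Fourier transform $\widetilde\chi(x,z)$ has compact support in $(x,z)$, pair $\langle\Theta[\Upsilon^\delta]W^\delta,\chi\rangle=-\langle W^\delta,\Xi^\delta\rangle$ with
\[
\Xi^\delta(x,\xi)=\frac{i}{(2\pi)^d}\int_{\R^d}\widetilde\chi(x,z)e^{iz\cdot\xi}\,\frac{1}{\delta}\Bigl(\Upsilon^\delta\bigl(x+\tfrac{\delta}{2}z,t\bigr)-\Upsilon^\delta\bigl(x-\tfrac{\delta}{2}z,t\bigr)\Bigr)\,dz,
\]
and use compact support of $\widetilde\chi$ together with the uniform convergence of $\nabla_x\Upsilon^\delta$ to conclude $\Xi^\delta\to-\nabla_x\Upsilon(x,t)\cdot\nabla_\xi\chi$, hence $\Theta[\Upsilon^\delta]W^\delta\to-\operatorname{div}_\xi(\nabla_x\Upsilon\,\nu)$ in $L^\infty([0,T];\mathcal S')$ weak$^\ast$. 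Passing to the limit in the weak formulation of the Wigner equation then yields the first equation of \eqref{classicalsystem}, and the continuity-in-time of $\nu$ and $\mu$ follows from the argument of \cite[Theorem IV.1]{LiPa} exactly as in Theorem \ref{thm:mixed}.

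The main obstacle, as in the $\eps\to0_+$ case, is that $\Upsilon^\delta$ is \emph{not} a fixed potential but is reconstructed self-consistently from $\mu^\delta$, so one cannot cite the linear Wigner-measure machinery of \cite{GMMP} directly; the resolution is the observation — already exploited in Proposition \ref{prop:wignerpotapprox} and its following remark — that the nonlinearity enters only through the quadratic densities $|\psi^\delta|^2$ and the measure $\mu^\delta$, so the requisite compactness is provided by the $L^2$-type a priori bounds plus equicontinuity, and no passage to a mixed-state formulation is needed. A secondary point to be careful about is the bookkeeping of successive subsequence extractions (for $W^\delta$, then for $F^\delta$, then for $\Upsilon^\delta$) and the fact that uniqueness of the limiting Vlasov system — which requires $\nu_{\rm in}$ and $\mu_{\rm in}$ to be independent of the chosen subsequence, as in \cite[Theorem IV.1]{LiPa} — is what finally removes the subsequence dependence; I would state this hypothesis on the initial data explicitly, consistent with how it is handled earlier in the paper.
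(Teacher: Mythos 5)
Your proposal follows essentially the same route as the paper: extract $\nu$ from the a priori bounds \eqref{eq:energy_psi}, upgrade $F^\delta\to F$ to uniform convergence on compacts by the equicontinuity/Arzel\`a--Ascoli argument of Proposition \ref{prop:wignerpotapprox}, invoke Proposition \ref{prop:deltawigner} for $\mu^\delta\to\mu$, deduce uniform-on-compacts convergence of $\Upsilon^\delta\to\Upsilon$, and then pass to the limit in the Wigner equation for $W^\delta$ by repeating the $\Xi^\delta$-testing argument of Proposition \ref{prop:wignerpotapprox}. This matches the paper's own proof (which is exactly this chain of steps, stated more tersely), including the remarks on subsequence extraction and the self-consistency of $\Upsilon^\delta$, so no further comparison is needed.
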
 
\begin{remark}
Note that system \eqref{classicalsystem} admits a special solution of the form
\[
\nu(x,\xi, t)= \delta(x-x(t), \xi-\xi(t)),\quad \mu(y,\eta, t)= \delta(y-y(t), \eta-\eta(t)), 
\]
where $x(t), y(t), \xi(t), \eta(t)$ solve the following Hamiltonian system:
\begin{equation*}
\left\{ \begin{split}
&\dot x(t) = \xi(t), \quad \quad  \quad \qquad \quad \ \ x(0)=x_0,\\
& \dot \xi(t)= - \nabla_xV(x(t), y(t)), \quad \xi(0)=\xi_0,\\
&\dot y(t) = \eta(t),  \quad \quad  \quad \qquad \quad \ \  \, y(0)= y_0,\\
&\dot \eta(t) = - \nabla_yV(x(t), y(t)), \quad \eta(0)=\eta_0.
\end{split}\right.
\end{equation*}
This describes the case of two classical point particles interacting with each other via $V(x,y)$. Obviously, if $V(x,y)=V_1(x) +V_2(y)$, the system completely decouples and 
one obtains the dynamics of two independent point particles under the influence of their respective external forces.
\end{remark}

\subsection{The classical limit of the TDSCF system} In this section we shall set $\eps =\delta$ and consider the now fully semi-classically scaled TDSCF system 
where only $0<\eps\ll 1$ appears as a small dimensionless parameter:
\begin{equation}\label{eq:epsTDSCF}
\left\{ \begin{split}
i \eps \partial_t \psi^{\eps}  = \left (-\frac{\eps^2}{2}\Delta_x  + \langle \varphi^{\eps},V\varphi^{\eps} \rangle_{L_y^{2}} \right) \psi^{\eps} \, ,
\quad \psi^{\eps} _{\mid t=0} = \psi^\eps_{\rm in}(x),\\
i\varepsilon \partial_t \varphi^{\eps}  = \left(-\frac{\varepsilon^2}{2}\Delta_y  + \langle \psi^{\eps},h^{\e} \psi^{\eps} \rangle_{L_x^{2}}\right) \varphi^{\eps}  \, ,
\quad \varphi^{\eps}_{\mid t=0} = \varphi^\eps_{\rm in}(y),
\end{split}\right.
\end{equation}
where, as in \eqref{eq:subham}, we denote
\[
h^\eps=-\frac{\eps^2}{2}\Delta_x+V(x,y).
\]
We shall introduce the associated $\e$-scaled Wigner transformations $w^\e[\varphi^\e](y, \eta, t)$ and $W^\e[\psi^\e](x,\xi,t)$ defined by \eqref{wig}. 
From the a-priori estimates established in Lemmas \ref{lem:mass} and \ref{lem:energy}, we infer that both $\psi^\eps$ and $\varphi^{\eps}$ are $\eps$-oscillatory and 
thus we immediately infer the existence of the associated limiting Wigner measures $\mu, \nu \in \mathcal M^+$, such that
$$
w^\e[\varphi^\e ] \stackrel{\e\rightarrow 0_+
}{\longrightarrow}  \mu  \quad \text{in $L^\infty(\R_t;\mathcal S'(\R^d_y \times \R^d_\eta)) \, {\rm weak}^\ast$} ,
$$
and
\[
W^\eps[\psi^\e] \stackrel{\e\rightarrow 0_+
}{\longrightarrow}  \nu \quad \text{in $L^\infty(\R_t;\mathcal S'(\R^d_x \times \R^d_\xi)) \, {\rm weak}^\ast$}.
\]
The associated Wigner transformed system is
\begin{equation}\label{eq:WigTDSCF}
\left\{ \begin{split}
& \partial_t W^{\eps}  + \xi \cdot \nabla_x W^\e + \Theta[\Upsilon^\e]W^\e = 0 \, ,
\quad W^{\eps}_{\mid t=0} = W^\e[\psi^\e_{\rm in}](x,\xi),\\
& \partial _t w^{\e}+\eta \cdot \nabla_y w^{\e} + \Theta [\mathcal V^{\eps}]w^{\varepsilon} =0 ,
\quad\  \quad w^{\eps}_{\mid t=0} = w^\e[\varphi^\e_{\rm in}](y,\eta).
\end{split}\right.
\end{equation}
By following the same arguments as before, we conclude that, up to extraction of sub-sequences, 
\[
\Upsilon^{\eps}(x,t)\stackrel{\eps \rightarrow 0_+}{\longrightarrow}  \iint_{\R^{2n}} V(x,y) \mu( dy, d\eta, t)  \equiv \Upsilon^{}(x,t),
\]
and
\[
\mathcal{V}^{\eps}(y,t)\stackrel{\eps \rightarrow 0_+}{\longrightarrow}  \iint_{\R^{2n}} V(x,y) \nu( dx, d\xi, t)  \equiv \mathcal{V}^{}(y,t), 
\]
on compact sets in $(x,t)$ and $(y,t)$ respectively. Consequently, one can show the convergences of the terms $\Theta[\Upsilon^\e]W^\e$ and $\Theta [\mathcal V^{\eps}]w^{\varepsilon}$ 
by the same techniques as in the proof of Proposition \ref{prop:wignerpotapprox}. In summary, we obtain the following result:
\begin{theorem}\label{thm:classical2} 
Let $V\in \mathcal S(\R_x^{d}\times \R^d_y)$, and $\psi^{\e}\in C(\R_t; H^1(\R^d_x))$, $\varphi^{\e}\in C(\R_t; H^1(\R^n_y))$ be solutions to the system 
\eqref{eq:mixed} with uniformly bounded initial mass and. Then, for any $T>0$, we have that $W^\eps$ and $w^\eps$ converge as $\eps \to 0_+$,  respectively, 
to $\mu \in C(\R_t; \mathcal M^+(\R_y^n\times \R^n_\eta))$ and $\nu \in C(\R_t; \mathcal M^+(\R_x^d\times \R^d_\xi))$, which
solve the classical system \eqref{classicalsystem} in the sense of distributions.
\end{theorem}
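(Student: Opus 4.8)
The plan is to follow the same scheme as in the proofs of Theorems \ref{thm:mixed} and \ref{thm:classical1}, the one structural difference being that both equations of \eqref{eq:epsTDSCF} are now nonlinearly self-consistent and carry the \emph{same} semiclassical parameter, so there is no fixed limiting linear Schr\"odinger equation against which $\psi^\eps$ could be compared in a strong norm (as was done in Proposition \ref{prop:potapprox}). First, from the conservation laws of Lemmas \ref{lem:mass} and \ref{lem:energy}, together with $V\ge 0$ and the normalization $\|\psi^\eps_{\rm in}\|_{L^2_x}=\|\varphi^\eps_{\rm in}\|_{L^2_y}=1$, one gets the uniform-in-$\eps$ bounds $\|\psi^\eps(t)\|_{L^2_x}+\|\eps\nabla_x\psi^\eps(t)\|_{L^2_x}\le C(t)$ and similarly for $\varphi^\eps$, so that both families are $\eps$-oscillatory. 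Extracting a (diagonal) subsequence, the results recalled in Section \ref{sec:Wigner} furnish the Wigner measures $\nu,\mu\in\mathcal M^+$ together with the weak-$\ast$ convergence of the position densities $|\psi^\eps(\cdot,t)|^2\rightharpoonup\int_{\R^d_\xi}\nu(\cdot,d\xi,t)$ and $|\varphi^\eps(\cdot,t)|^2\rightharpoonup\int_{\R^n_\eta}\mu(\cdot,d\eta,t)$.

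The key step is to upgrade this to \emph{uniform-on-compacts} convergence of the self-consistent potentials $\Upsilon^\eps(x,t)=\int_{\R^n_y}V(x,y)|\varphi^\eps(y,t)|^2\,dy$ and $\mathcal V^\eps(y,t)=\int_{\R^d_x}V(x,y)|\psi^\eps(x,t)|^2\,dx$, \emph{together with their spatial gradients}, to $\Upsilon(x,t)=\iint V(x,y)\mu(dy,d\eta,t)$ and $\mathcal V(y,t)=\iint V(x,y)\nu(dx,d\xi,t)$; the gradients are what is needed to control the pseudodifferential symbols below. Pointwise convergence follows by testing the weak-$\ast$ convergence of the densities against $V(x,\cdot)$ and $\nabla_x V(x,\cdot)$; boundedness of $\Upsilon^\eps,\nabla_x\Upsilon^\eps,D_x^2\Upsilon^\eps$ (and the analogues for $\mathcal V^\eps$) by the corresponding $L^\infty$-norms of $V$ is immediate from $\|\varphi^\eps(t)\|_{L^2_y}=\|\psi^\eps(t)\|_{L^2_x}=1$, which also gives equicontinuity in the space variable exactly as in the proof of Proposition \ref{prop:wignerpotapprox}. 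Equicontinuity in time comes from the continuity equation $\partial_t|\varphi^\eps|^2=-\diver_y J^\eps$ with current $J^\eps=\eps\,\im(\overline{\varphi^\eps}\nabla_y\varphi^\eps)$ satisfying $\|J^\eps(t)\|_{L^1_y}\le\|\varphi^\eps(t)\|_{L^2_y}\|\eps\nabla_y\varphi^\eps(t)\|_{L^2_y}\le C(t)$: integrating by parts, $\partial_t\Upsilon^\eps$ and $\partial_t\nabla_x\Upsilon^\eps$ are expressed as integrals of the uniformly $L^1$-bounded current against derivatives of $V$, hence are uniformly bounded, and symmetrically for $\mathcal V^\eps$ via the $x$-current. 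Arzel\`a--Ascoli then extracts a further subsequence along which $\Upsilon^\eps,\nabla_x\Upsilon^\eps$ (and $\mathcal V^\eps,\nabla_y\mathcal V^\eps$) converge uniformly on compact sets, and the uniform limits are identified with $\Upsilon,\nabla_x\Upsilon$ (resp.\ $\mathcal V,\nabla_y\mathcal V$) by combining the pointwise convergence with the continuity in $t$ of the limit.

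With $\Upsilon^\eps\to\Upsilon$ and $\mathcal V^\eps\to\mathcal V$ at the $C^1$ level in hand, I would pass to the limit in the Wigner-transformed system \eqref{eq:WigTDSCF}: the free-transport terms converge by the weak-$\ast$ convergence of $W^\eps$ and $w^\eps$, while for the pseudodifferential terms one argues as in Proposition \ref{prop:wignerpotapprox} — the symbols $\delta\Upsilon^\eps$ and $\delta\mathcal V^\eps$ converge uniformly on compacts to $y\cdot\nabla_x\Upsilon$ and $z\cdot\nabla_y\mathcal V$, and this \emph{strong} convergence survives pairing against the only-weak-$\ast$ convergent $W^\eps,w^\eps$, yielding $\Theta[\Upsilon^\eps]W^\eps\to-\nabla_x\Upsilon\cdot\nabla_\xi\nu$ and $\Theta[\mathcal V^\eps]w^\eps\to F\cdot\nabla_\eta\mu$ with $F=-\nabla_y\mathcal V$. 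This produces the coupled Vlasov system \eqref{classicalsystem} in the sense of distributions. Finally, since $\Upsilon(\cdot,t)$ and $\mathcal V(\cdot,t)$ are $C^1$ with globally Lipschitz gradients (inherited from $V\in\mathcal S$ and mass conservation), the associated Hamiltonian flow is well defined, giving continuity in time of $\nu,\mu$ and the push-forward representation as in \cite[Theorem IV.1]{LiPa}; a Gr\"onwall/fixed-point argument for the coupled characteristics then yields uniqueness of the limit system with prescribed initial data, so that the subsequential convergence above upgrades to convergence of the full family as $\eps\to0_+$.

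The step I expect to be the main obstacle is the uniform-on-compacts convergence of the coupling potentials and their gradients: unlike in Theorem \ref{thm:mixed}, we have no strong $L^2$ control on $\psi^\eps$ (or $\varphi^\eps$) itself, so the convergence of $\Upsilon^\eps$ and $\mathcal V^\eps$ must be distilled purely from the weak-$\ast$ convergence of the macroscopic densities against the smooth kernel $V$, and the compactness needed to pass from weak-$\ast$ to uniform convergence is supplied precisely by mass conservation and the $L^1$-bound on the $\eps$-scaled current — the latter being exactly what makes the time-equicontinuity estimate go through. Once this is secured, the product $\Theta[\Upsilon^\eps]W^\eps$ of a strongly converging symbol with a weak-$\ast$ converging Wigner function converges, and the remainder of the argument is routine.
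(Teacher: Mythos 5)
Your proposal follows essentially the same route as the paper: the uniform mass/energy bounds give $\eps$-oscillatory families and limiting Wigner measures, the self-consistent potentials $\Upsilon^\eps$, $\mathcal V^\eps$ (and their gradients) are shown to converge locally uniformly by testing the weak$^\ast$ convergence of the position densities against $V$ combined with uniform bounds, equicontinuity and Arzel\`a--Ascoli, and the nonlocal terms $\Theta[\Upsilon^\eps]W^\eps$ and $\Theta[\mathcal V^\eps]w^\eps$ are passed to the limit exactly by the technique of Proposition \ref{prop:wignerpotapprox}. Your extra details -- the time-equicontinuity via the $L^1$ bound on the $\eps$-scaled current, and the Gr\"onwall/characteristics uniqueness argument upgrading subsequential to full convergence -- are refinements of steps the paper leaves implicit, not a different method.
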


In other words, we obtain the same classical limiting system for $\eps=\delta\to 0_+$, as when we took the iterated limit $\eps\to 0_+$ and $\delta \to 0_+$. 
Moreover, it is clear by now that the same result can be achieved from \eqref{eq:TDSCF} by exchanging the role of $\eps$ and $\delta$ and taking the iterated limit 
where first $\delta \to 0_+$ and then $\e \to 0_+$. In summary, we have established the diagram of semi-classical limits as is shown in Figure 1.

\begin{figure}\label{diagram}
\begin{centering}
\includegraphics[scale=0.7]{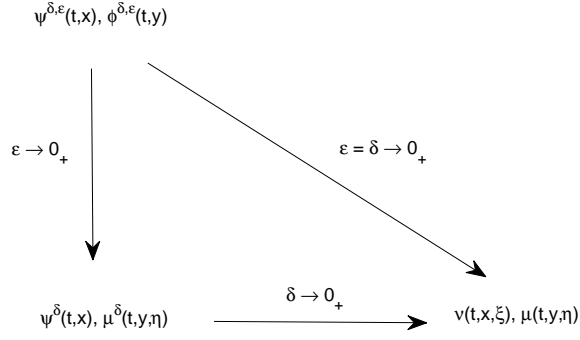}
\caption{The diagram of semi-classical limits: the iterated limit and the classical limit.}
\end{centering}
\end{figure}



\section{Numerical methods based on time-splitting spectral approximations}\label{sec:spectral}

In this section, we will develop efficient and accurate numerical methods for in solving the semi-classically scaled TDSCF equations \eqref{eq:TDSCF} and the 
Ehrenfest equations \eqref{eq:ehrenfest}. 
The highly oscillatory nature of these models strongly suggest the use of spectral algorithms, which are the preferred method of choice 
when dealing with semi-classical models, cf. \cite{JMS}. In the following, we will design and investigate time-splitting spectral algorithms, for both the TDSCF system and 
the Ehrenfest model, which will be shown to be second order in time. The latter is not trivial due to the self-consistent coupling within our equations and it 
will become clear that higher order methods can, in principle, be derived in a similar fashion. 
Furthermore, we will explore the optimal meshing strategy if only physical observables and not the wave function itself are being sought. 
In particular, we will show that one can take time steps independent of semi-classical parameters in order to capture correct physical observables. 

\subsection{The SSP2 method for the TDSCF equations}   

In our numerical context, we will consider the semi-classically scaled TDSCF equations \eqref{eq:TDSCF}
where in one spatial dimension and subject to periodic boundary conditions, i.e.
 \begin{equation}\label{eq:TDSCF1d}
\left\{ \begin{split}
i \delta \partial_t \psi^{\eps,\delta}  = \left (-\frac{\delta^2}{2}\Delta_x  +  \Upsilon^{\e,\delta}(x,t) \right) \psi^{\eps,\delta} \, ,\quad a<x<b\, ,
\quad \psi^{\eps,\delta} _{\mid t=0} = \psi^\delta_{\rm in}(x),\\
i\varepsilon \partial_t \varphi^{\eps,\delta}  = \left(-\frac{\varepsilon^2}{2}\Delta_y  + \Lambda^{\e,\delta}(y,t) \right) \varphi^{\eps,\delta}  \, , \quad a<y<b\, ,
\quad \varphi^{\eps,\delta}_{\mid t=0} = \varphi^\eps_{\rm in}(y),
\end{split}\right.
\end{equation}
subject to
\[
\psi^{\eps}(a,t) = \psi^{\eps}(b,t), \quad \varphi^{\eps}(a,t) = \varphi^{\eps}(b,t), \quad \forall t\in \R.
\] 
As before, we denote $\Upsilon^{\e,\delta} = \langle \varphi^{\e,\delta}, V \varphi^{\e,\delta}\rangle_{L^2_y}$ and $\Lambda^{\e,\delta} = \langle \psi^{\e,\delta}, h^\delta \psi^{\e,\delta}\rangle_{L^2_x}$.

Clearly, $a,b>0$ have to be chosen such that the numerical domain $[a,b]$ is sufficiently large in order to avoid the possible influence of 
boundary effects on our numerical solution. 
The numerical method developed below will work for 
all $\varepsilon$ and $\delta$, even if $\varepsilon=o(1)$ or $\delta=o(1)$. In addition, we will see that it can be naturally extended to the multi-dimensional case.  
			
\subsubsection{The construction of the numerical method}
We assume, on the computational domain $[a,b]$, a uniform spatial grid in $x$ and $y$ respectively,
$x_{j_1}=a+j_1\Delta x$, $y_{j_2}=a+j_2\Delta y$, where $j_{m}=0,\cdots N_{m}-1$, $N_{m}=2^{n_{m}}$, $n_{m}$
are some positive integers for $m=1,2$, and $\Delta x=\frac{b-a}{N_1}$, $\Delta y=\frac{b-a}{N_2}$. We also assume
discrete time $t^{k}=k\Delta t$, $k=0,\cdots,K$ with a uniform time step $\Delta t$. 

The construction
of our numerical method for \eqref{eq:TDSCF1d} is based on the following operator
splitting technique. For every time step $t\in[t^n,t^{n+1}]$, we solve the kinetic step
 \begin{equation}\label{kinetic}
 \left\{ \begin{split}
 i\delta\partial_t \psi^{\varepsilon,\delta} = -\frac{\delta^2}{2}\Delta_{x} \psi^{\varepsilon,\delta},\\
i\varepsilon \partial_t \varphi^{\varepsilon,\delta} = -\frac{\varepsilon^2}{2}\Delta_{y}\varphi^{\varepsilon,\delta}; 
\end{split} \right.
\end{equation} 
and the potential step
 \begin{equation} \label{pot}
  \left\{ \begin{split}
 i\delta\partial_t \psi^{\varepsilon,\delta} = {\Upsilon}^{\varepsilon,\delta}(x,t)\psi^{\varepsilon,\delta},  \\
i\varepsilon\partial_t \varphi^{\varepsilon,\delta} =  {\Lambda}^{\varepsilon,\delta}(y,t)\varphi^{\varepsilon,\delta};
\end{split}\right.
\end{equation} 
possibly for some fractional time steps in a specific order. For example, if Strang's splitting is applied, then the operator splitting error is clearly second order in time (for any fixed value of $\e$). 
However, in the semi-classical regime $\e \to 0_+$, a careful calculation shows that the operator splitting error is actually $O(\Delta t^2/\varepsilon)$, cf. \cite{TS,SL-TS}.

Next, let $U_{j_{1}}^{n}$ be the numerical approximation of the wave functions $\psi^{\e,\delta}$ at $x=x_{j_1}$ and $t=t_{n}$. Then, the kinetic step for $\psi^{\e,\delta}$ 
can be solved {\it exactly} in Fourier space via: 
\begin{equation*}
U_{j_{1}}^{*}=\frac{1}{N_{1}}\sum_{l_{1}=-N_{1}/2}^{N_{1}/2-1}e^{-i\delta \Delta t \mu_{l}^{2}/2} \, \hat{U}_{l_{1}}^{n}e^{i\mu_{l_{1}}(x_{j_{1}}-a)},
\end{equation*}
where $\hat{U}_{l_1}^{n}$ are the Fourier coefficients of $U_{j_1}^{n}$,
defined by
\[
\hat{U}_{l_1}^{n}=\sum_{j_1=0}^{N_1-1}U_{j_1}^{n}e^{-i\mu_{l_1}(x_{j_1}-a)},\quad\mu_{l_1}=\frac{2\pi l_1}{b-a},\quad l_1=-\frac{N_1}{2},\cdots,\frac{N_1}{2}-1.
\]
Similarly, the kinetic step for $\varphi^{\e,\delta}$ can also be solved exactly in the Fourier space.

On the other hand, for the potential step \eqref{pot} with $t_1<t<t_2$, we formally find
\begin{equation}\label{potstep}
\psi^{\varepsilon,\delta}(x,t_2)=\exp\left(-\frac{i}{\delta} \int_{t_1}^{t_2} \Upsilon^{\varepsilon,\delta}(x,s)\, ds \right) \psi^{\varepsilon,\delta}(x,t_1),
\end{equation}
\begin{equation}\label{potstep2}
\varphi^{\varepsilon,\delta}(y,t_2)=\exp\left(-\frac{i}{\e} \int_{t_1}^{t_2} \Lambda^{\varepsilon,\delta}(y,s)\,ds \right) \varphi^{\varepsilon,\delta}(y,t_1),
\end{equation}
where $0<t_2-t_1\leq \Delta t$. The main problem here is, of course, that the mean field 
potentials $\Upsilon^{\varepsilon,\delta}$ and $\Lambda^{\varepsilon,\delta}$ depend on the solution $\psi^{\e,\delta}, \varphi^{\e,\delta}$ themselves. 
The key observation is, however, that within each potential step, the mean field potential $\Upsilon^{\e,\delta}$ is in fact {\it time-independent} (at least if we impose the assumption that 
the external potential $V=V(x,y)$ does not explicitly depend on time). Indeed, a simple calculation shows
\begin{eqnarray*}
\partial_t\Upsilon^{\e,\delta} & \equiv  & \partial_t\left \langle \varphi^{\e,\delta} , V \varphi^{\e,\delta} \right \rangle_{L^2_y} = \left \langle\partial_t\varphi^{\e,\delta}, V  \varphi^{\e,\delta} \right \rangle_{L^2_x}+\left \langle \varphi^{\e,\delta}, V\partial_t\varphi^{\e,\delta} \right \rangle_{L^2_y} \\
&= & \frac{1}{i\varepsilon}\left \langle \varphi^{\e,\delta}, \left( V\Lambda^{\e,\delta}-\Lambda^{\e,\delta}V \right)  \varphi^{\e,\delta} \right \rangle_{L^2_y}   =  0.
\end{eqnarray*} 
In other words, \eqref{potstep} simplifies to 
\begin{equation}\label{psi_for}
\psi^{\varepsilon,\delta}(x,t_2)=\exp\left( -\frac{i(t_1-t_2)}{\delta} \Upsilon^{\varepsilon,\delta}(x,t_1)\right) \psi^{\varepsilon,\delta}(x,t_1).
\end{equation}
which is an {\it exact} solution formula for $\psi^{\varepsilon,\delta}$ at $t=t_2$.

The same argument does not work for the other self-consistent potential $\Lambda^{\e,\delta}$, since formally 
\begin{eqnarray*}
\partial_t\Lambda^{\e,\delta} & \equiv  & \partial_t\left \langle \psi^{\e,\delta}, h^\delta  \psi^{\e,\delta}\right \rangle_{L^2_x}  =  
 \left \langle \partial_t\psi^{\e,\delta} , h^\delta  \psi^{\e,\delta}\right \rangle_x + \left \langle \psi^{\e,\delta} , h^\delta  \partial_t\psi^{\e,\delta}\right \rangle_{L^2_x}  \\
 & = & \frac{1}{i\delta}   \left \langle \psi^{\e,\delta}, \left(  h^\delta \Upsilon^{\e,\delta}-\Upsilon^{\e,\delta} h^\delta \right) \psi^{\e,\delta}\right \rangle_{L^2_x} \\
 & = & \frac{1}{i\delta} \left \langle \psi^{\e,\delta}, -\frac{ \delta^2}{2} \nabla_x \Upsilon^{\e,\delta}\cdot \nabla_x \psi^{\e,\delta}\right \rangle_{L^2_x} +\frac{1}{i\delta} \left \langle \psi^{\e,\delta}, -\frac{ \delta^2}{2} \Delta_x \Upsilon^{\e,\delta}\psi^{\e,\delta}\right \rangle_{L^2_x}\\
 & = & \frac{1}{2}\left \langle \psi^{\e},  \nabla_x \Upsilon^{\e}\cdot \left( i\delta \nabla_x \right)  \psi^{\e} \right \rangle_{L^2_x} + \frac{i \delta}{2} \left \langle \psi^{\e},  \Delta_x \Upsilon^{\e} \psi^{\e} \right \rangle_{L^2_x}.
\end{eqnarray*}
However, since $\Lambda^{\e,\delta}(y,t)=\langle \psi^{\eps,\delta},h^{\delta} \psi^{\eps,\delta} \rangle_{L_x^{2}}$, 
the formula \eqref{psi_for} for $\psi^{\e,\delta}$ allows to evaluate $\Lambda^{\e,\delta}(y,t)$ for any $t_1<t<t_2$. Moreover, 
the above expression for $\partial_t \Lambda^{\e,\delta}$, together with the Cauchy-Schwarz inequality and the energy estimate in Lemma \ref{lem:energy}, directly implies 
that $\partial_t \Lambda^{\e,\delta}$ is $O(1)$. 
Thus, one can use standard numerical integration methods to approximate the time-integral within \eqref{potstep2}. For example, one can use the trapezoidal rule to obtain
\begin{equation}\label{phi_approx}
\varphi^{\varepsilon,\delta}(y,t_2) \approx \exp\left(-{\frac{i( \Lambda^{\e,\delta}(y,t_2)+\Lambda^{\e,\delta}(y,t_1))(t_1-t_2)}{2\eps}}\right)\varphi^{\varepsilon,\delta}(y,t_1). 
\end{equation}
Obviously, this approximation introduces a phase error of order $O(\Delta t^2/\varepsilon)$, which is comparable to the operator splitting error. 
This is the reason why we call the outlined numerical method SSP2, i.e., a second order Strang-spliting spectral method.

\begin{remark}
In order to obtain a higher order splitting method to the equations, one just needs to use a higher order quadrature rule to approximate the time-integral within \eqref{potstep2}.
\end{remark}

\subsubsection{Meshing strategy}

In this subsection, we will analyze the dependence on the semi-classical parameters of the numerical error by applying the Wigner transformation onto the scheme we proposed above. 
In particular, this yields an estimate on the approximation error for (the expectation values of) physical observables due to \eqref{eq:expectation}. 
Our analysis thereby follows along the same lines as in Refs. \cite{TS,SL-TS}. For the sake of simplicity, we shall only consider the differences between their cases and ours.

We denote the Wigner transforms $W^{\e,\delta}\equiv W^{\delta}[\psi^{\e,\delta}]$ and $w^{\e,\delta}=w^{\e}[\varphi^{\e,\delta}]$, which satisfy the system
\begin{equation}\label{eq:WigTDSCF2}
\left\{ \begin{split}
& \partial_t W^{\eps,\delta}  + \xi \cdot \nabla_x W^{\e,\delta} + \Theta[\Upsilon^{\e,\delta}]W^{\e,\delta} = 0 \, ,
\quad W^{\eps,\delta}_{\mid t=0} = W^{\delta}[\psi^{\delta}_{\rm in}](x,\xi),\\
& \partial _t w^{\e,\delta}+\eta \cdot \nabla_y w^{\e,\delta} + \Theta [\mathcal V^{\eps,\delta}]w^{\varepsilon,\delta} =0 ,
\quad\  \quad w^{\eps,\delta}_{\mid t=0} = w^{\e}[\varphi^\e_{\rm in}](y,\eta).
\end{split}\right.
\end{equation} 
Clearly, the time splitting for the Schr\"odinger equation induces an analogous time-splitting of the associated the Wigner equations \eqref{eq:WigTDSCF2}. 
Having in mind the properties of the SSP2 method, we only need to worry about the use of the the trapezoidal rule in approximating $\varphi^{\e,\delta}$ within the potential step. 
We shall consequently analyze the error induced by this approximation in the computation of the Wigner transform. 
To this end, we are interested in analyzing two special cases: $\delta=O(1)$, and $\e \ll 1$,  or $\delta=\e \ll 1$. These correspond to the semi-classical limits 
we showed in Theorem \ref{thm:mixed} and Theorem \ref{thm:classical2}.  

We first consider the case $\delta=\e \ll 1$, where Wigner transformed TDSCF system reduces to \eqref{eq:WigTDSCF}. 
In view of (\ref{phi_approx}), if we denote the approximation on the right hand side by $\tilde{\varphi}^{\e}$, then $\tilde {\varphi}^{\e}$ is the exact solution to the following equation
\begin{equation*}
i\varepsilon \partial_t { \varphi^\e} =  G(y)\tilde{\varphi^\e},\quad t_1< t < t_2, 
\end{equation*} 
where
\[
G^{\e}(y)=\frac{1}{2} (\Lambda^{\varepsilon}(y,t_1)+\Lambda^{\varepsilon}(y,t_2)).
\]

If one denotes the Wigner transform of $\tilde{\varphi^\e}(y,t)$ by $\tilde{w^\e}(y,\eta,t)$, then, by the same techniques as in the previous sections, 
one can show that $\tilde{w^\e}$ satisfies
\begin{equation} \label{tildew}
\partial_t \tilde{w^\e}-\nabla_y G^{\e}\cdot \nabla_{\eta}\tilde{w^\e}+O(\varepsilon)=0.
 \end{equation}
In order to compare $w^{\e}(y,\eta,t_2)$ and $\tilde{w}(y,\eta,t_2)$, we now consider the following set of equations
\begin{align*}
& \partial_t w_1  - \nabla_y \mathcal V^{\varepsilon}(y,t)\cdot \nabla_\eta w_1=0, \quad  t_1< t < t_2, \\
& \partial_t w_2  - \nabla_y G^{\e}(y)\cdot \nabla_\eta w_2=0, \quad t_1< t < t_2, 
\end{align*}
subject to the same initial condition at $t=t_1$:
\[
 w_1(y,\eta,t_1)= w_2(y,\eta,t_1)= w_0(y,\eta).
\]
By the trapezoidal rule, 
\[
\int_{t_1}^{t_{2}}\nabla_y \mathcal V^{\varepsilon}(y,s) \, ds \approx (t_2-t_1)\nabla_y G^\e(y),
\] 
since $\nabla_y \Lambda^\e(y,t)\equiv \nabla_y \mathcal V^\e (y,t).$ Thus, by the method of characteristics, it is straightforward to measure the discrepancy between $w_1$ and $w_2$ at $t=t_2$
and one easily obtains 
\[
w_1-w_2=O \left(  \Delta t ^3 \right).
\]
Furthermore, within the potential step of the time-split Wigner equation, equation \eqref{tildew} together with the method of characteristics, implies at $t=t_2$
\begin{equation}\label{drop_eps}
\tilde{w^{\varepsilon}}-w_1=O \left( \varepsilon \Delta t  \right), \quad \tilde{w^\e}-w_2=O \left(  \varepsilon \Delta t \right), 
\end{equation}
where we have used $0<t_2-t_1\leq \Delta t$. 
  
In summary, we conclude that for the SSP2 method, the approximation within the potential step results in an one-step error which is bounded by $ O(\varepsilon \Delta t + \Delta t ^3 ) $. 
Thus, for fixed $\Delta t$, and as $\varepsilon \rightarrow 0_+$, this one-step error in computing the physical observables is dominated by $O(\Delta t^3)$ and we consequently can 
take $\varepsilon$-{\it independent} time steps for accurately computing semi-classical behavior of physical observables. 
By standard numerical analysis arguments, cf. \cite{TS,SL-TS}, one consequently finds, that the SSP2 method introduces an $O(\Delta t^2)$ error in computing the physical 
observables for  $\e \ll 1$ within an $O(1)$ time interval. Similarly, one can obtain the same results when $\delta$ is fixed while $\e \ll 1$.

We remark that, if a higher order operator splitting is applied to the TDSCF equations, and if a higher
order quadrature rule is applied to approximate formula \eqref{potstep2}, one obviously can expect higher order convergence in the physical observables.


\subsection{ The SVSP2 method for the Ehrenfest equations}  

In this section, we consider the one-dimensional Ehrenfest model obtained in Section \ref{sec:ehrenfest}. More precisely, we  consider 
a (semi-classical) Schr$\ddot{\rm o}$dinger equation 
coupled with Hamilton's equations for a classical point particle, i.e 
\begin{equation}\label{eq:ehrenfest2}
\left\{ \begin{split}
& i \delta \partial_t \psi^{\delta}  = \left (-\frac{\delta^2}{2}\Delta_x  + V(x,y(t)) \right) \psi^{\delta} \, ,\,\quad a<x<b\,,
 \\
&\dot y(t) = \eta(t),\quad \dot \eta(t)= - \int_{\R^d} \nabla_y V\left(x,y(t)\right) |\psi^{\delta}(x,t)|^2\, dx, 
\end{split}\right.
\end{equation}
with initial conditions
\[
\psi^{\e} _{\mid t=0} = \psi^\e_{\rm in}(x), \quad y_{\mid t=0} =y_0, \quad \eta_{\mid t=0}= \eta_0,
\]
and subject to periodic boundary conditions.
Inspired by the SSP2 method, we shall present a numerical method to solve \eqref{eq:ehrenfest2}, 
which is second order in time and works for all $0<\delta \le 1 $. 

As before, we assume a uniform spatial grid
$x_{j}=a+j\Delta x$, where $N=2^{n_{0}}$, $n_{0}$
is an positive integer and $\Delta x=\frac{b-a}{N}$. We also assume
uniform time steps $t^{k}=k\Delta t$, $k=0,\cdots,K$ for both the Schr$\ddot{\rm o}$dinger equation and Hamilton's ODEs. 
For every time step $t\in[t^n,t^{n+1}]$, we split the system \eqref{eq:ehrenfest2} into a kinetic step
 \begin{equation} \label{ehrenfest_kinetic}
 \left\{ \begin{split}
& i\delta \partial_t  \psi^{\delta}(x,t) = -\frac{\delta^2}{2}\Delta_{x} \psi^{\delta}(x,t), \\
& \dot{y}=\eta,\quad \dot{\eta}=0; 
\end{split}\right.
\end{equation} 
and a potential step
 \begin{equation} \label{ehrenfest_pot}
  \left\{ \begin{split}
& i\delta \partial_t  \psi^{\delta}(x,t) =  V(x,y(t)) \psi^{\delta} (x,t), \\
& \dot{y}=0,\quad \dot{\eta}=-\int_{\R^d} \nabla_y V\left(x,y(t)\right) |\psi^{\delta}(x,t)|^2\, dx.
\end{split}\right.
\end{equation} 
We remark that, the operator splitting method for the Hamilton's equations may
be one of the symplectic integrators. The readers may refer to \cite{OdeSp} for a systematic discussion. 

As before, the kinetic step \eqref{ehrenfest_kinetic} can be solved analytically. 
On the other hand, within the potential step \eqref{ehrenfest_pot}, we see that
\begin{equation}\label{ydot}
\partial_t V(x,y(t))  =  \nabla_y V \cdot \dot{y}(t)=0,
\end{equation}
i.e., $V(x,y(t))$ is indeed time-independent.  Moreover
\begin{align*}
& \partial_t \left( \int_{\R^d} \nabla_y V\left(x,y(t)\right) |\psi^{\delta}(x,t)|^2\, dx \right) \\
 & =  \left \langle {\partial_t} \psi^{\delta}, \nabla_y V(x,y(t))  \psi^{\delta} \right \rangle_{L^2_x}+\left \langle \psi^{\delta}, \nabla_y V(x,y(t)) {\partial_t} \psi^{\delta} \right \rangle_{L^2_x}  \\
& \quad + \left \langle \psi^{\delta}, {\partial_t} \nabla_y V(x,y(t))  \psi^{\delta} \right \rangle_{L^2_x}
\end{align*}
Now, we can use the first equation in \eqref{ehrenfest_pot} and the fact that $V(x,y(t))\in \R$ to infer that the first two terms on the right hand side of this time-derivate cancel each other. We thus have 
\begin{align*}
\partial_t \left( \int_{\R^d} \nabla_y V\left(x,y(t)\right) |\psi^{\delta}(x,t)|^2\, dx \right) =   \left \langle \psi^{\delta}, \nabla^2_y V(x,y(t))\cdot \dot{y}(t) \psi^\delta \right \rangle_{L^2_x}  =  0,
\end{align*}
in view of \eqref{ydot}. In other words, also the semi-classical force is time-independent within each potential step. In summary, we find that for $t\in[t_1,t_2]$, the 
potential step admits the following exact solutions  
\[
\psi^{\delta}(x,t_2)=\exp\left(\frac{i}{\delta}(t_1-t_2)V(x,y(t_1)) \right) \psi^{\delta}(x,t_1),
\]
as well as 
\[
y(t_2)=y(t_1),\quad \eta(t_2)=\eta(t_1)-(t_2-t_1)\int_{\R^d} \nabla_y V\left(x,y(t_1)\right) |\psi^{\delta}(x,t_1)|^2\, dx .
\]
This implies, that for this type of splitting method, there is {\it no numerical error in time} within the kinetic or the potential steps and thus, we only 
pick up an error of order $O(\Delta t^2/\delta)$ in the wave function and an error of order $O(\Delta t^2)$ in the classical coordinates induced by the operator splitting. 
Standard arguments, cf. \cite{TS,SL-TS}, then imply that one can use $\delta$-independent time steps to correctly capture the expectation values of physical observables. 
We call this proposed method SVSP2, i.e., a second order Strang-Verlet splitting spectral method. It is second order in time but can easily be improved by using higher order operator splitting 
methods for the Schr\"odinger equation and for Hamilton's equations.


\section{Numerical tests}\label{sec:tests}
In this section, we test the SSP2 method for the TDSCF equations and the SVSP2 method for the Ehrenfest system.  
In particular, we want to test the methods after the formation of caustics, which generically appear in the WKB approximation of the Schr\"odinger wave functions, cf \cite{SMM}.
We also test the convergence properties in time and with respect to the spatial grids for the wave functions and the following physical observable densities 
\[
\rho^\e(t,x) = |\psi^\e(t,x)|^2, \quad J^\e(t,x) = \e \text{Im} (\overline{\psi ^\e}(x,t) \nabla \psi^\e(x,t)),
\]
i.e., the particle density and current densities associated to $\psi^\e$ (and analogously for $\varphi^\e$).

\subsection{SSP2 method for the TDSCF equations} We first study the behavior of the proposed SSP2 method. 
In Example 1, we fix $\delta$ and test the SSP2 method for various $\e$. In Example 2 and Example 3,
 we take $\delta=\eps$ and assume the same spatial grids in $x$ and $y$. 
\medskip

\textbf{Example 1.} In this example, we fix $\delta=1$, and test the SSP2 method for various $\e=o(1)$. 
We want to test the convergence in spatial grids and time, and whether $\e$-independent time steps can be taken to calculate accurate physical observables.  

Assume $x,y\in[-\pi,\pi]$ and let $V(x,y)=\frac12(x+y)^2$. The initial conditions are of the WKB form,
$$
\psi^{\delta}_{\rm in}(x)=e^{-2(x+0.1)^2}e^{i\sin{x}/\delta}, \quad \varphi^{\e}_{\rm in}(y)=e^{-5(y-0.1)^2}e^{i\cos{y}/\varepsilon}.
$$
In the following all our numerical tests are computed until a stopping time $T=0.4$.

We first test the convergence of the SSP2 method in $\Delta x$ and $\Delta y$, respectively. By the energy estimate in Lemma 2.4, 
one expects the meshing strategy $\Delta x= O(\delta)$ and $\Delta y=O(\e)$ to obtain spectral accuracy. 
We take $\delta=1$ and $\eps=\frac{1}{1024}$. The reference solution is computed with sufficiently fine spatial grids and time steps: 
$\Delta x=\Delta y= \frac{2\pi}{32768}$ and $\Delta t=\frac{0.4}{4096}$. We repeated the tests with the same $\Delta y$ and $\Delta t$ 
but different $\Delta x$, or with the same $\Delta x$ and $\Delta t$ but different $\Delta y$. 
The errors in the wave functions and the position densities are calculated and plotted in Figure 2, from which we observe clearly that 
$\Delta x=O(1)$ and $\Delta y=O(\e)$ are sufficient to obtain spectral accuracy. 
Due to the time discretization error, the numerical error cannot be reduced further once $\Delta x$ and $\Delta y$ become sufficiently small. 

\begin{figure}
\begin{centering}
\includegraphics[scale=0.6]{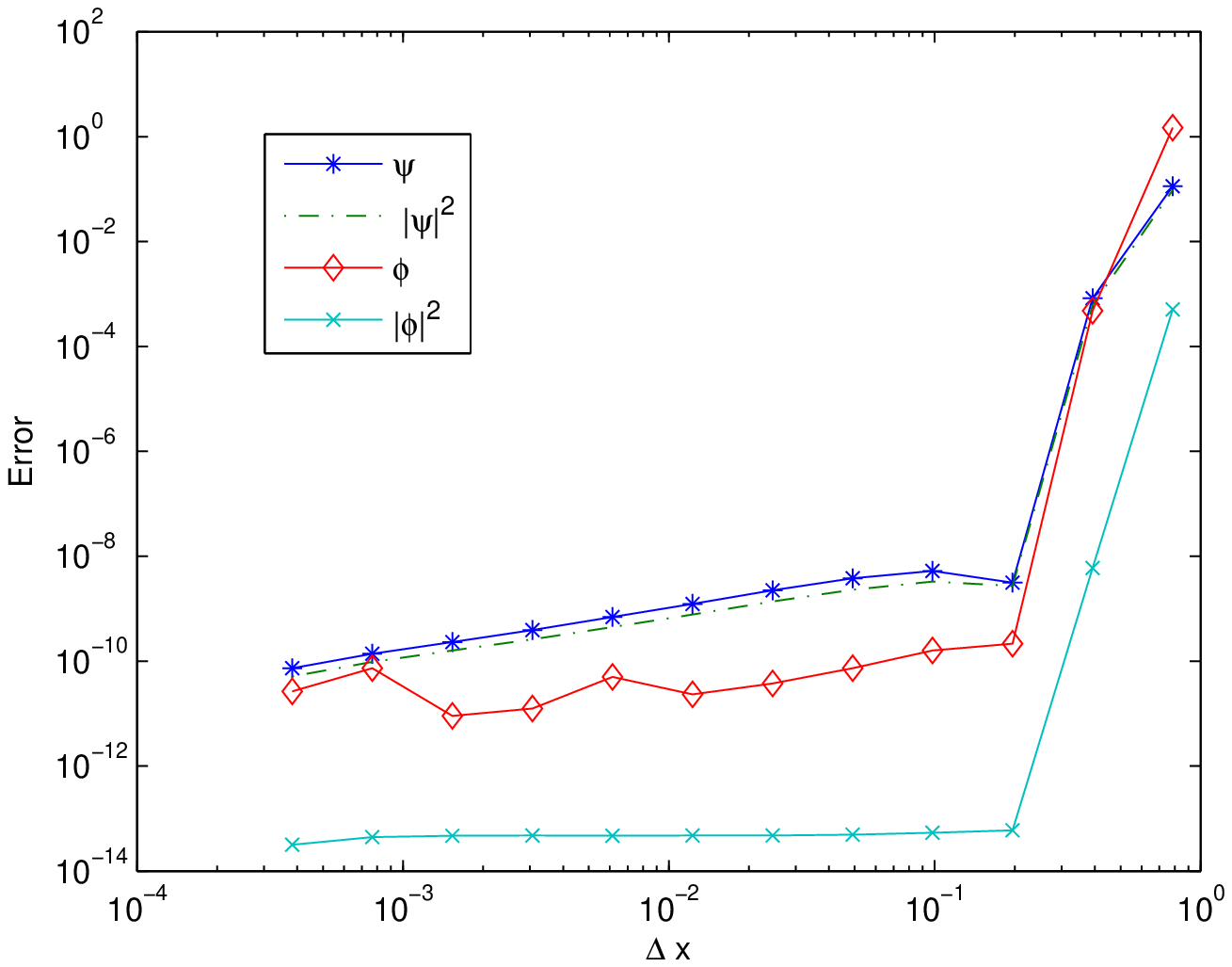} \par \includegraphics[scale=0.6]{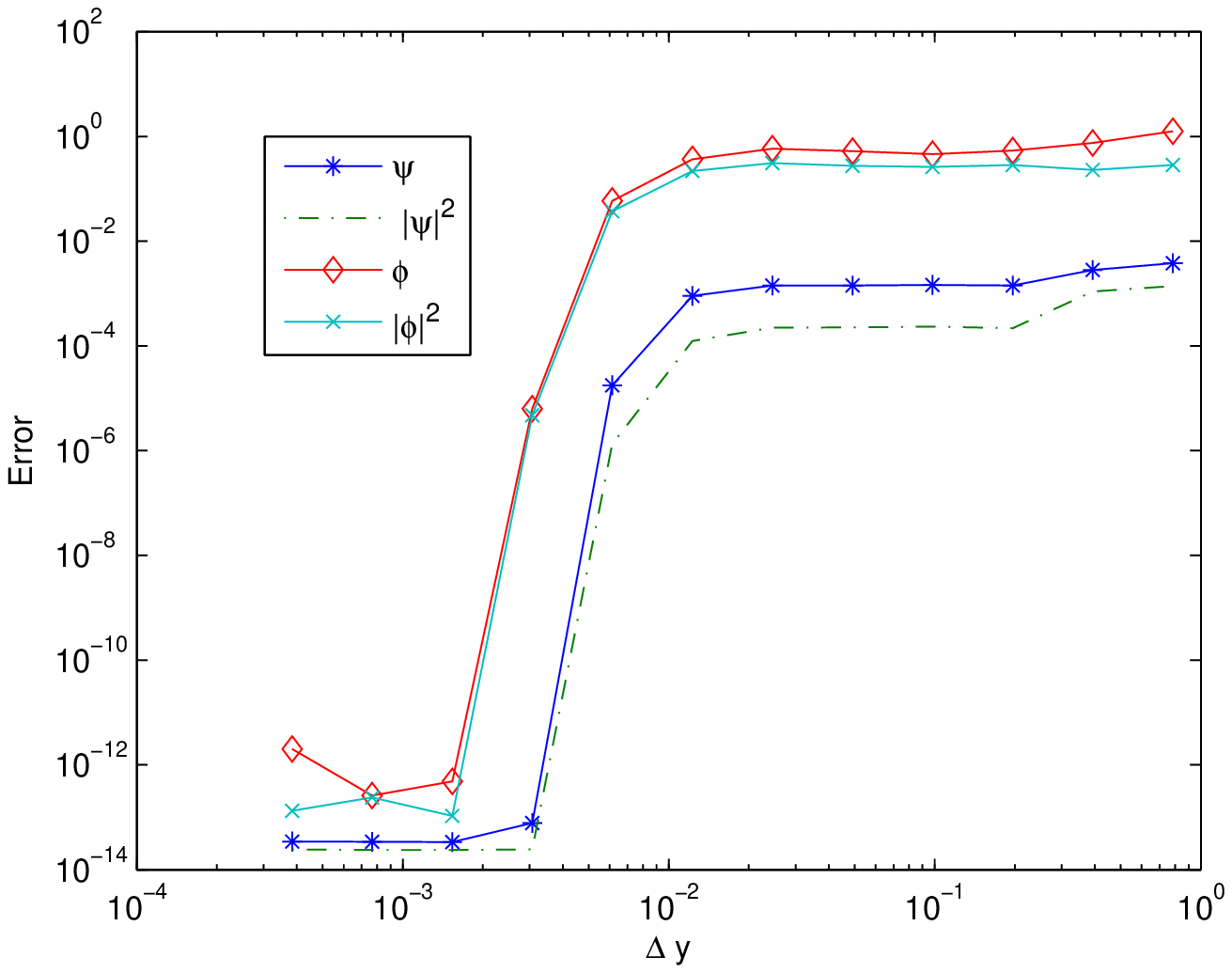} \par
\caption{Reference solution: $\Delta x=\Delta y= \frac{2\pi}{32768}$ and $\Delta t=\frac{0.4}{4096}$. Upper picture: fix $\Delta y= \frac{2\pi}{32768}$ and $\Delta t=\frac{0.4}{4096}$, take $\Delta x=\frac{2\pi}{16384}$, $\frac{2\pi}{8192}$, $\frac{2\pi}{4096}$, $\frac{2\pi}{2048}$, $\frac{2\pi}{1024}$, $\frac{2\pi}{512}$, $\frac{2\pi}{256}$, $\frac{2\pi}{128}$, $\frac{2\pi}{64}$, $\frac{2\pi}{32}$, $\frac{2\pi}{16}$, $\frac{2\pi}{8}$. Lower Picture: fix $\Delta x= \frac{2\pi}{32768}$ and $\Delta t=\frac{0.4}{4096}$, take $\Delta y=\frac{2\pi}{16384}$, $\frac{2\pi}{8192}$, $\frac{2\pi}{4096}$, $\frac{2\pi}{2048}$, $\frac{2\pi}{1024}$, $\frac{2\pi}{512}$, $\frac{2\pi}{256}$, $\frac{2\pi}{128}$, $\frac{2\pi}{64}$, $\frac{2\pi}{32}$, $\frac{2\pi}{16}$, $\frac{2\pi}{8}$.}
\end{centering}
\end{figure}

Next, to test the the convergence in time, we take $\delta=1$, $\varepsilon=\frac{1}{1024}$, and compare to a reference solution which is computed through a well resolved mesh with 
 $\Delta x=\frac{2\pi}{512}$, $\Delta y= \frac{2\pi}{16348}$ and $\Delta t=\frac{0.4}{4096}$. 
Then, we compute with the same spatial grids, but with different time steps. 
The results are illustrated in the Figure 3. We observe that the method is stable even if $\Delta t \gg \varepsilon$. Moreover, 
we get second order convergence in the wave functions as well as in the physical observable densities. 

\begin{figure}
\begin{centering}
\includegraphics[scale=0.6]{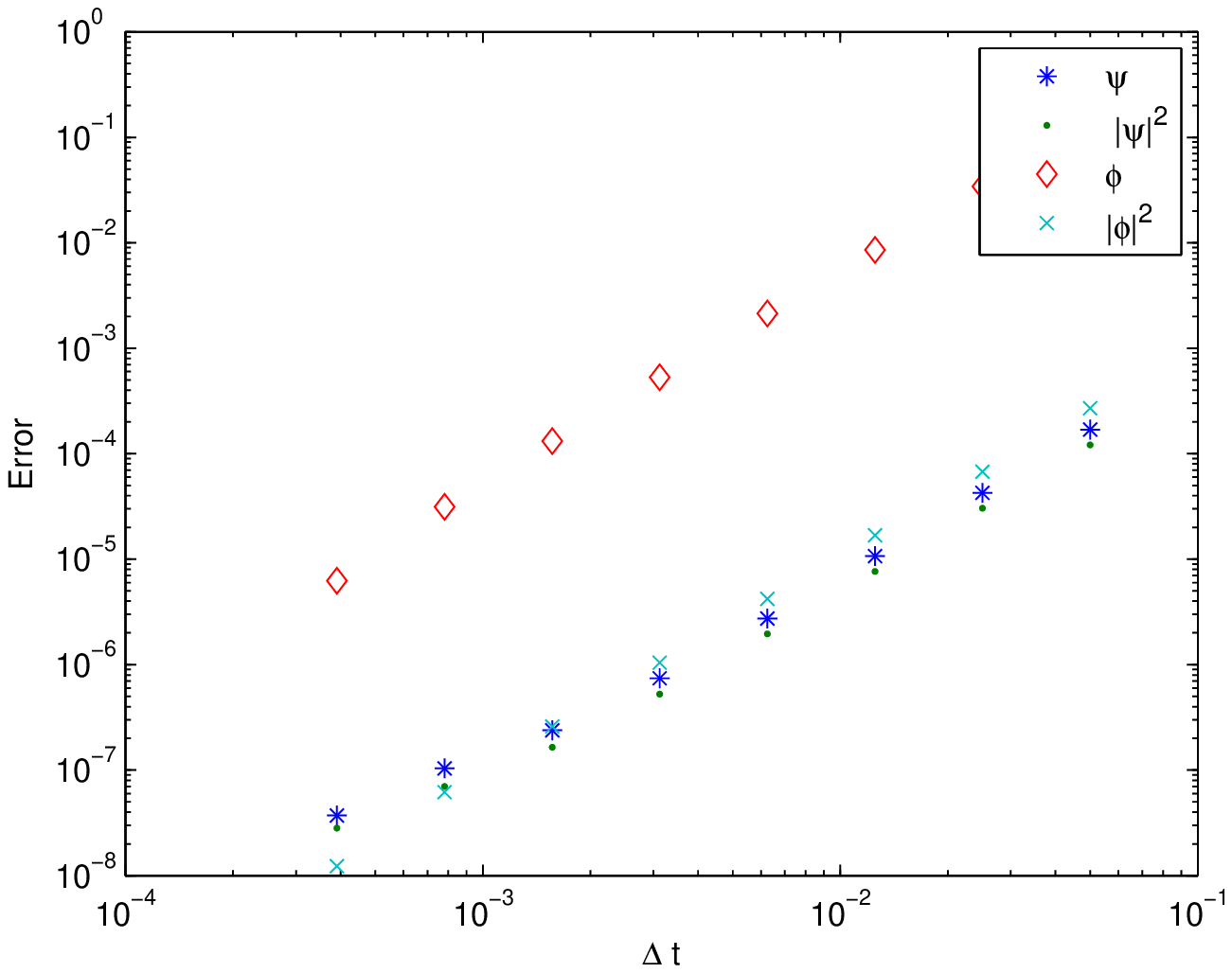}  \par
\caption{Reference solution: $\Delta x=\frac{2\pi}{512}$, $\Delta y= \frac{2\pi}{16348}$ and $\Delta t=\frac{0.4}{4096}$. SSP2: fix $\Delta x=\frac{2\pi}{512}$, $\Delta y= \frac{2\pi}{16348}$, take $\Delta t=\frac{0.4}{1024}$, $\frac{2\pi}{512}$, $\frac{2\pi}{256}$, $\frac{2\pi}{128}$, $\frac{2\pi}{64}$, $\frac{2\pi}{32}$, $\frac{2\pi}{16}$, $\frac{2\pi}{8}$}.
\end{centering}
\end{figure}

At last, we test whether $\eps$-independent $\Delta t$ can be taken to capture the correct physical observables. 
We solve the TDSCF equations with resolved spatial grids. The numerical solutions with $\Delta t=O(\eps)$ 
are used as the reference solutions. For $\eps=\frac{1}{64}$, $\frac{1}{128}$, $\frac{1}{256}$, $\frac{1}{512}$, $\frac{1}{1024}$, $\frac{1}{2048}$ and $\frac{1}{4096}$, we fix $\Delta t=\frac{0.4}{8}$. 
The errors in the wave functions and position densities are calculated. 
We see in Figure 4 that, the error in the wave functions increases as $\e \rightarrow 0_{+}$, but the error in physical observables does not change notably.

\begin{figure}
\begin{centering}
\includegraphics[scale=0.6]{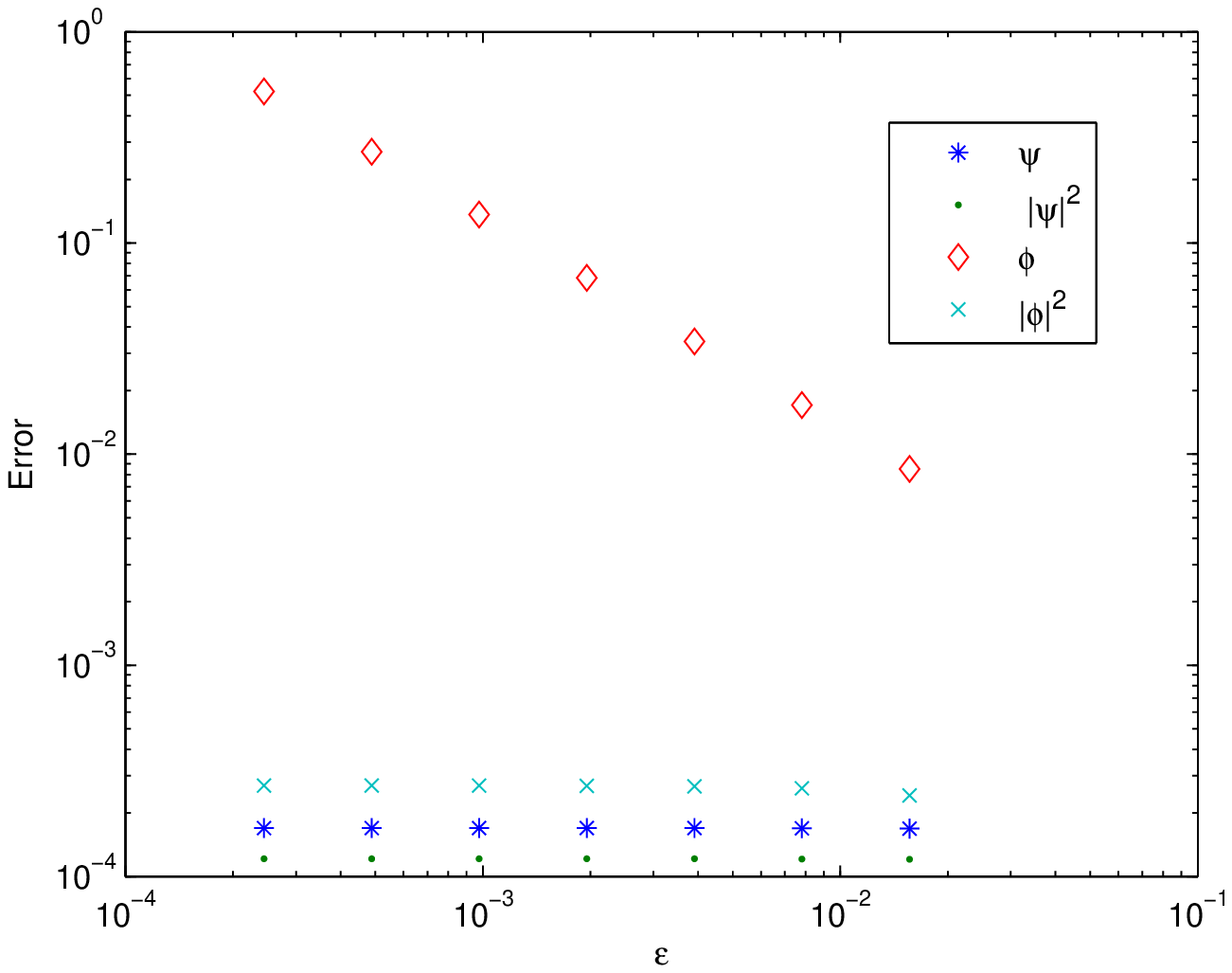} \par
\caption{Fix $\Delta t=0.05$. For $\eps=1/64$, $1/128$, $1/256$, $1/512$, $1/1024$, $1/2048$ and $1/{4096}$, 
$\Delta x=2\pi\varepsilon/16$, respectively. The reference solution is computed with the same $\Delta x$, but $\Delta t={\eps}/{10}$.}
\end{centering}
\end{figure}

\medskip

\textbf{Example 2.} We want to numerically verify the behavior of the TDSCF system as $\eps=\delta \to 0_+$ compared to the classical limit. To this end, let $x,y \in [0,1]$, and assume periodic boundary conditions for both equations. 
Assume $V(x,y)=1$, and choose initial conditions of WKB form
\begin{align*}
& \psi^{\e}_{\rm in}(x)=e^{-25(x-0.58)^2}e^{-i\ln{\left(2\cosh{5(x-0.6)}\right)}/5\e}, \\
& \varphi^\e_{\rm in}(y)=e^{-25(y-0.5)^2}e^{-i\ln{\left(2\cosh{5(y-0.5)}\right)}/5\varepsilon}.
\end{align*}
The tests are done for $\varepsilon=\frac{1}{512}$ and $\varepsilon=\frac{1}{2048}$, respectively. 
Note that, the potential $V$ is chosen in this simple form so that the semi-classical limit can be computed analytically. Indeed, the classical limit yields a decouples system of two independent Vlasov equations,  similar to the examples in \cite{TS,SL-TS,Cau2}. The formation of caustics was previously analyzed in \cite{Cau1, Cau2} and it is known that the caustics is formed for $t<0.54$.

We solve the TDSCF equations by the SSP2 method until $T=0.54$ with two different meshing strategies
\[
\Delta x= O(\varepsilon), \quad \Delta t= O(\varepsilon);
\]
and
\[
\Delta x= O(\varepsilon), \quad \Delta t= o(1).
\]
The numerical solutions are then compared with the semi-classical limits: 
In Figure 5 and Figure 6, the dashed line represents the semi-classical limits (\ref{classicalsystem}), the dotted line represents the 
numerical solution with $\varepsilon$-independent $\Delta t$, and the solid line represents the numerical solution with $\varepsilon$-dependent $\Delta t$. 
The figures confirm that the semi-classical limits are still valid after caustics formation, and 
that the numerical scheme can capture the physical observables with $\varepsilon$-independent $\Delta t$.

\begin{figure}\label{512limit}
\begin{centering}
\includegraphics[scale=0.40]{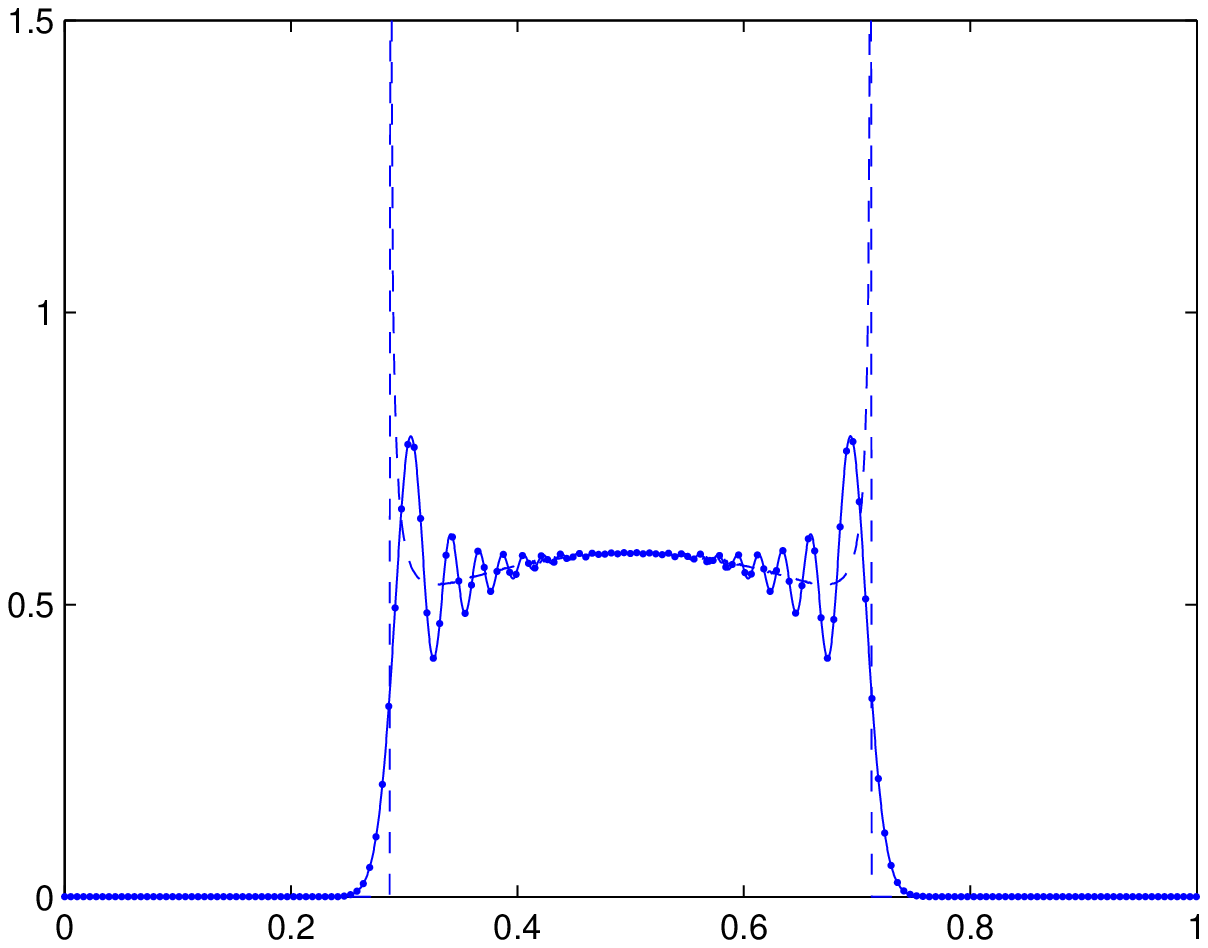}
\includegraphics[scale=0.40]{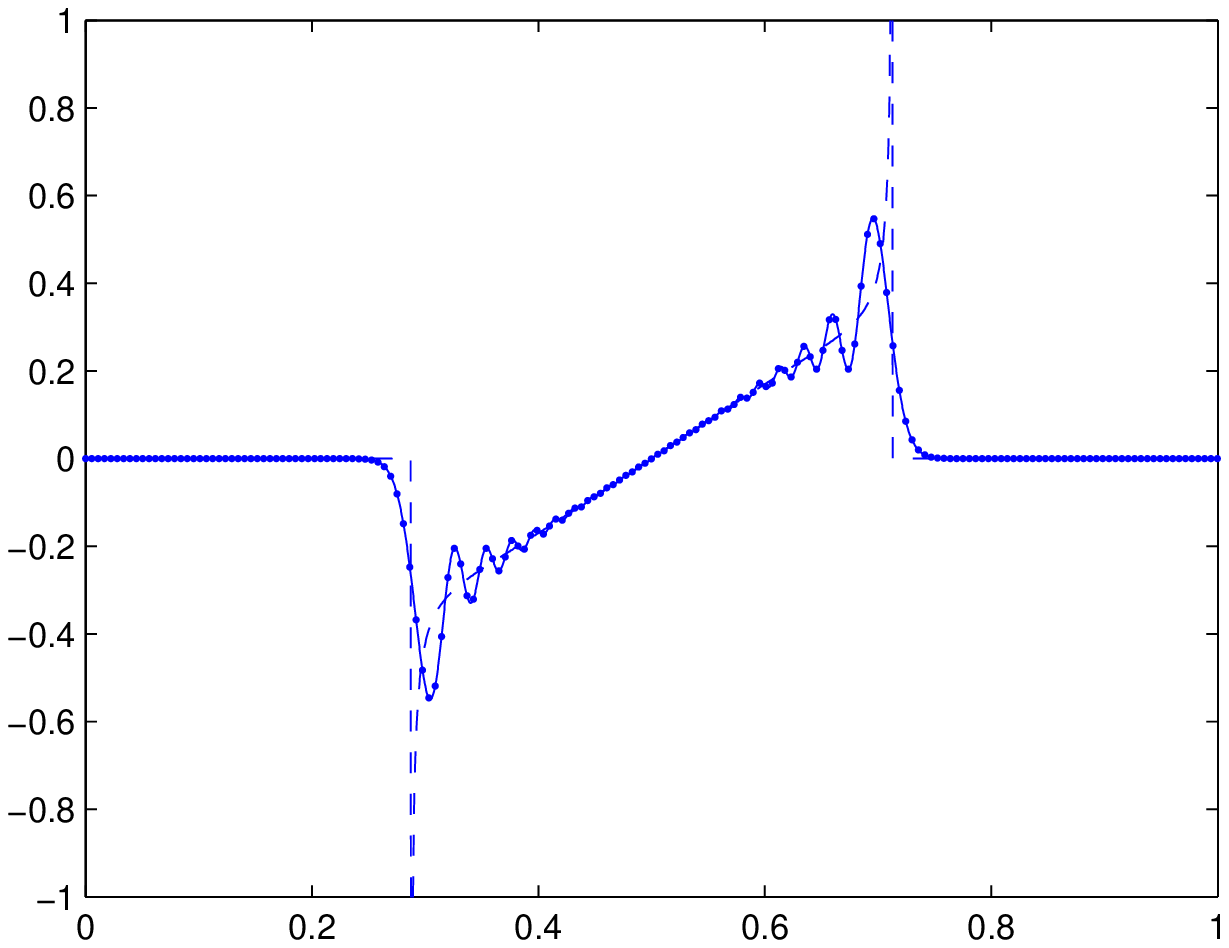}
\par \end{centering}
\begin{centering}
\includegraphics[scale=0.40]{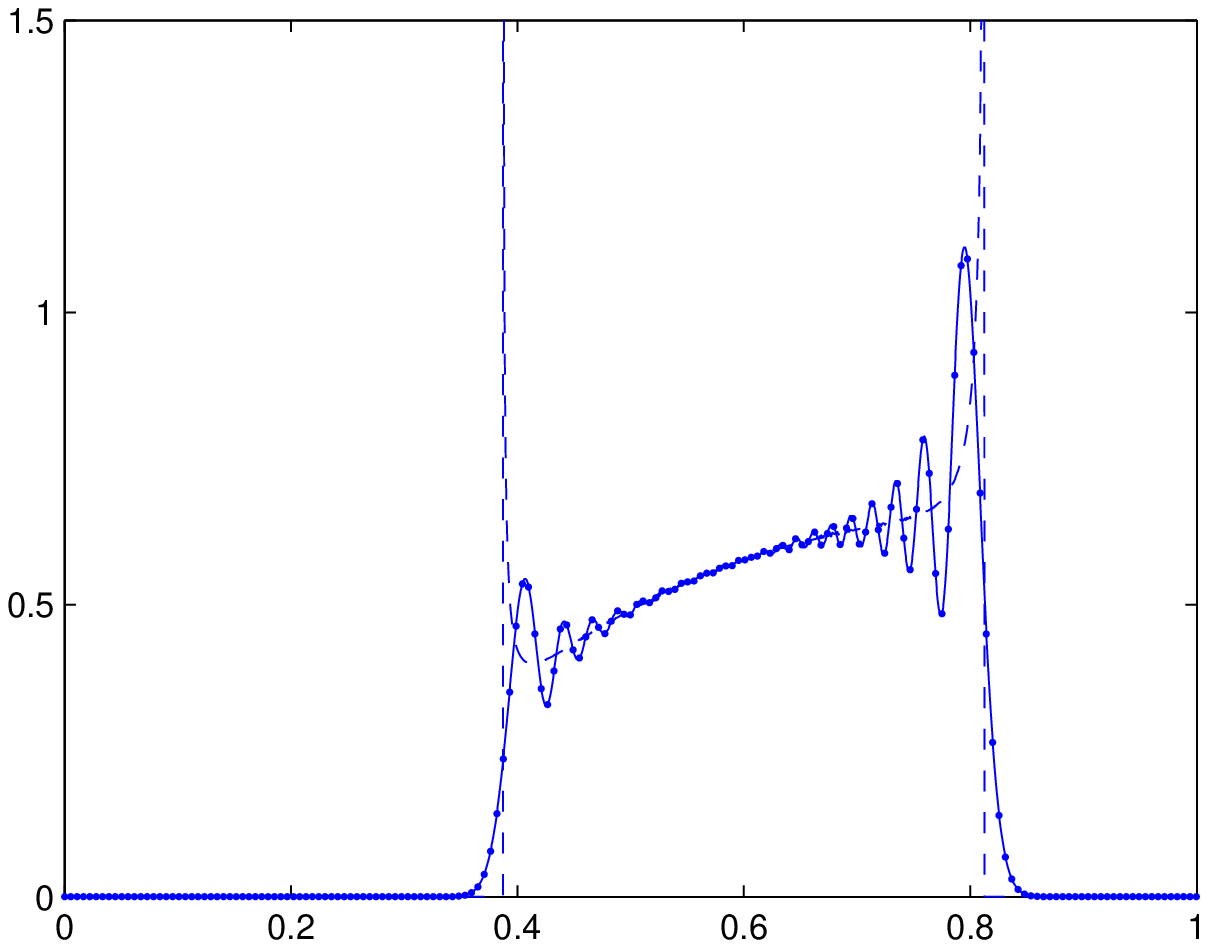}
\includegraphics[scale=0.40]{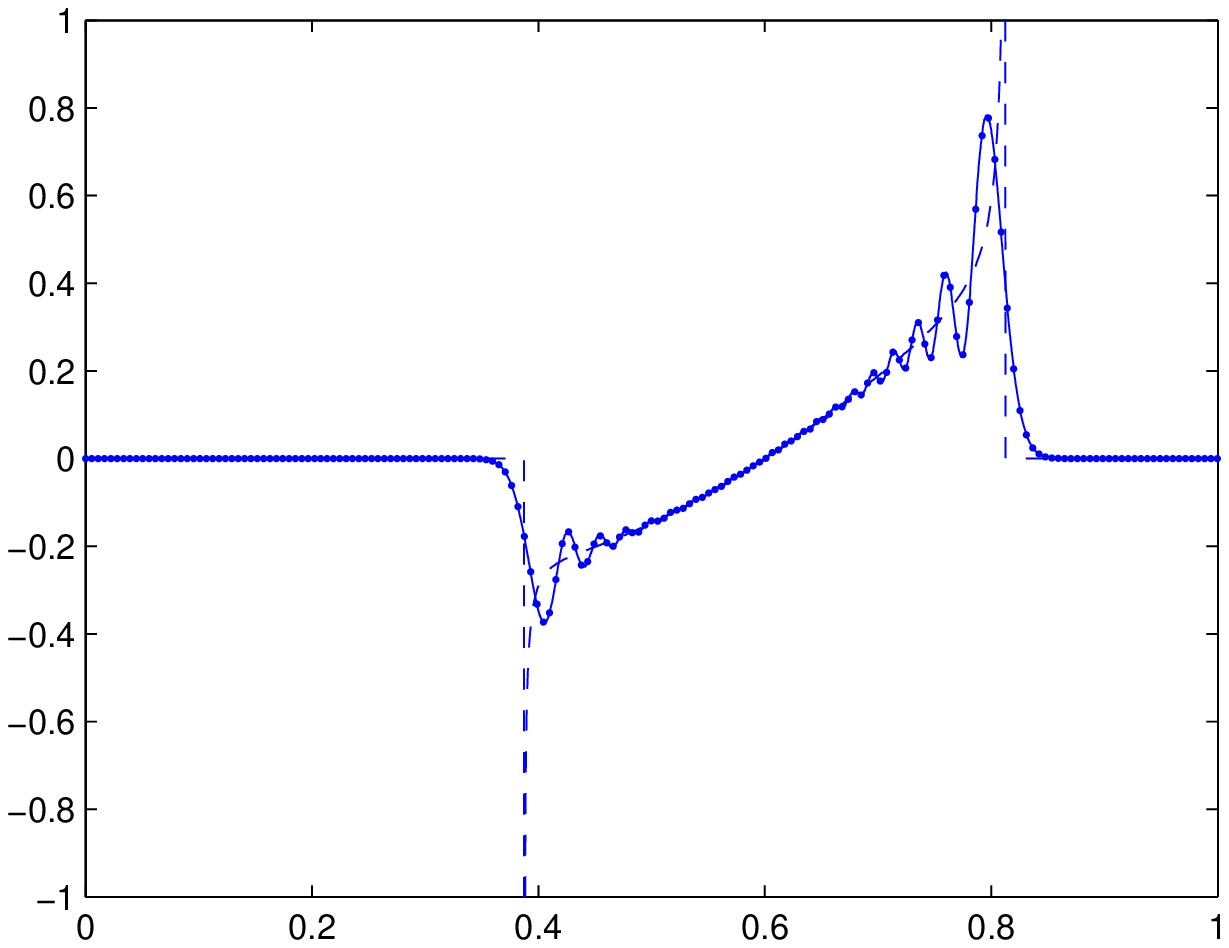}
\par \end{centering}
\caption{$\varepsilon=\frac{1}{512}$. First row: position density and current density of $\varphi^{\e}$; second row: position density and flux density of $\psi^{\e}$.}
\end{figure}
\begin{figure}\label{2048limit}
\begin{centering}
\includegraphics[scale=0.40]{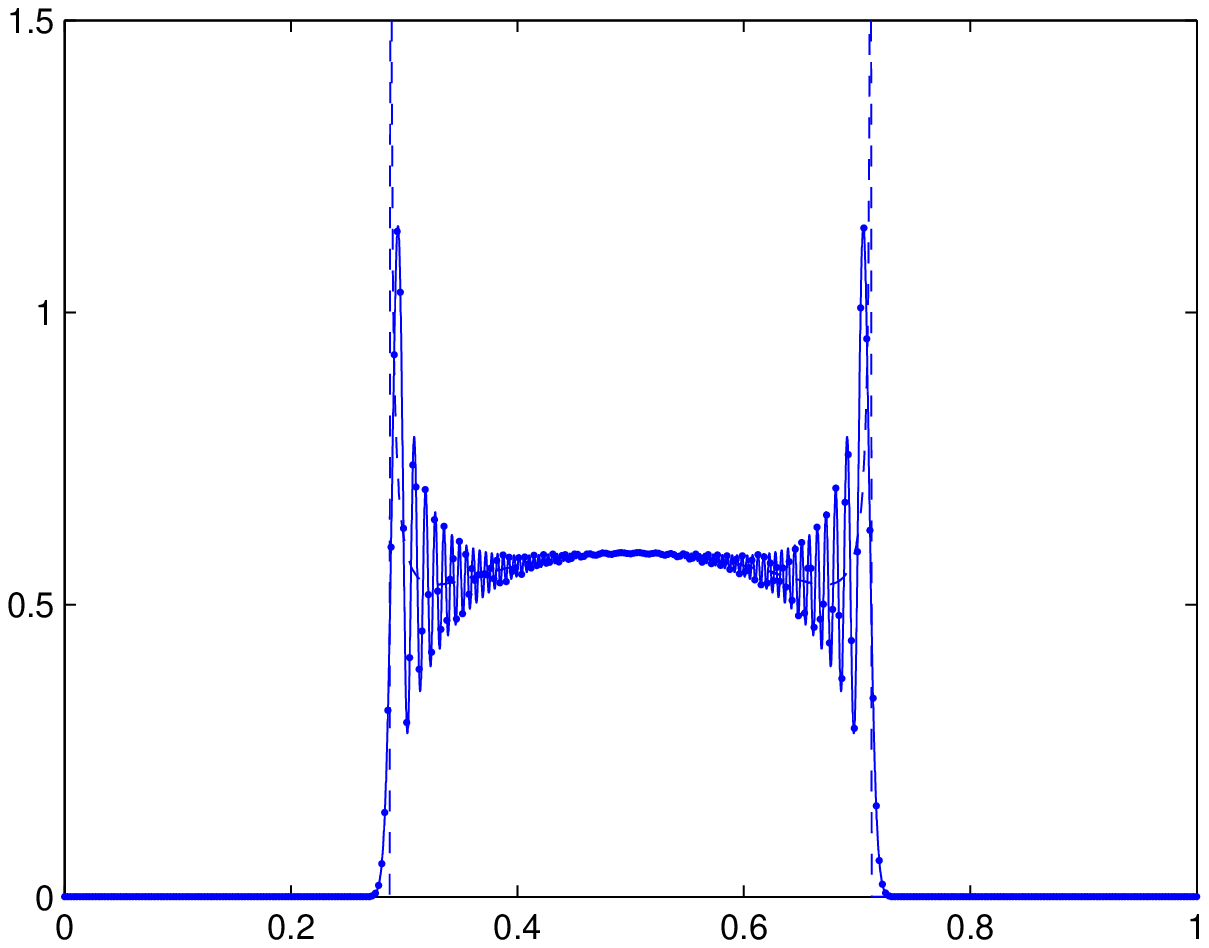}
\includegraphics[scale=0.40]{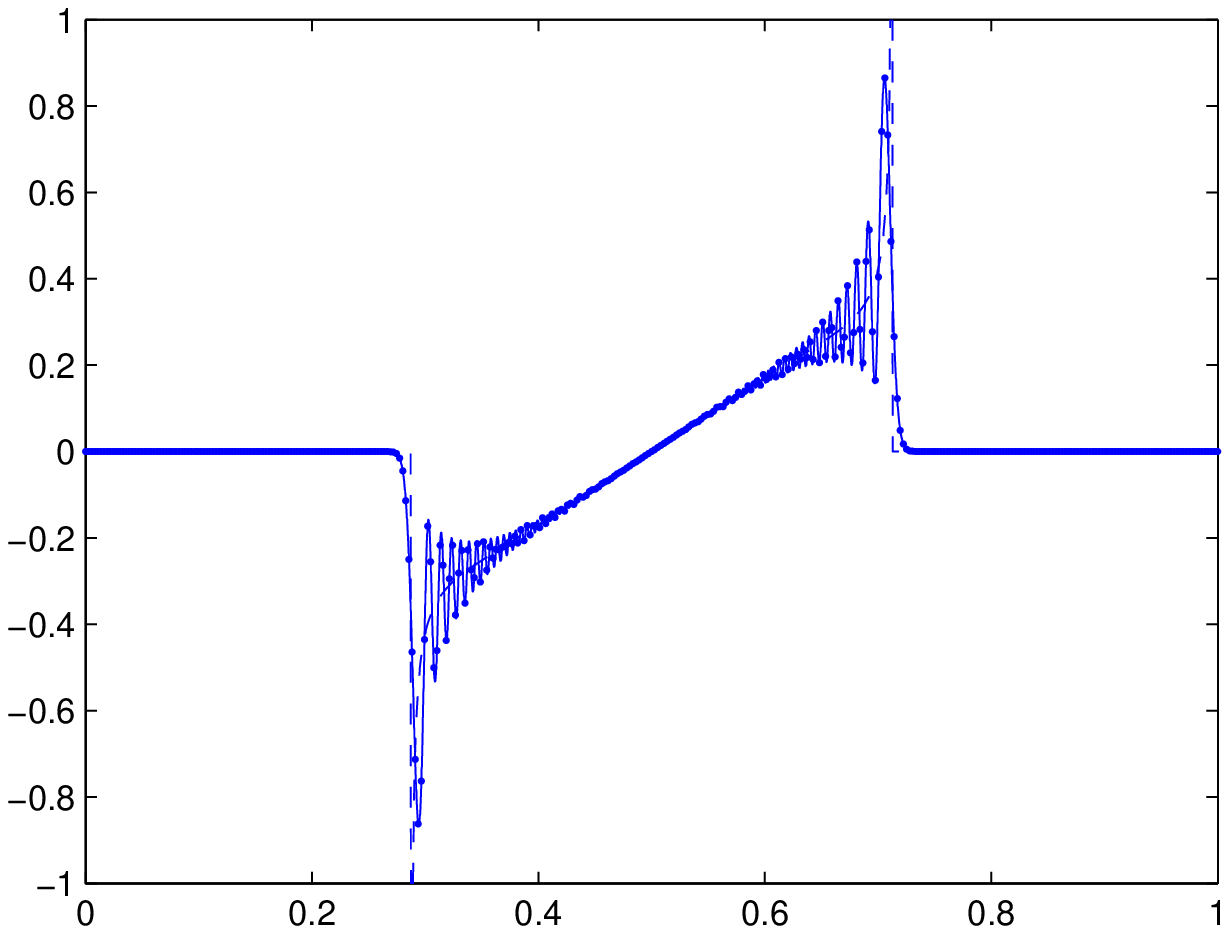}
\par \end{centering}
\begin{centering}
\includegraphics[scale=0.40]{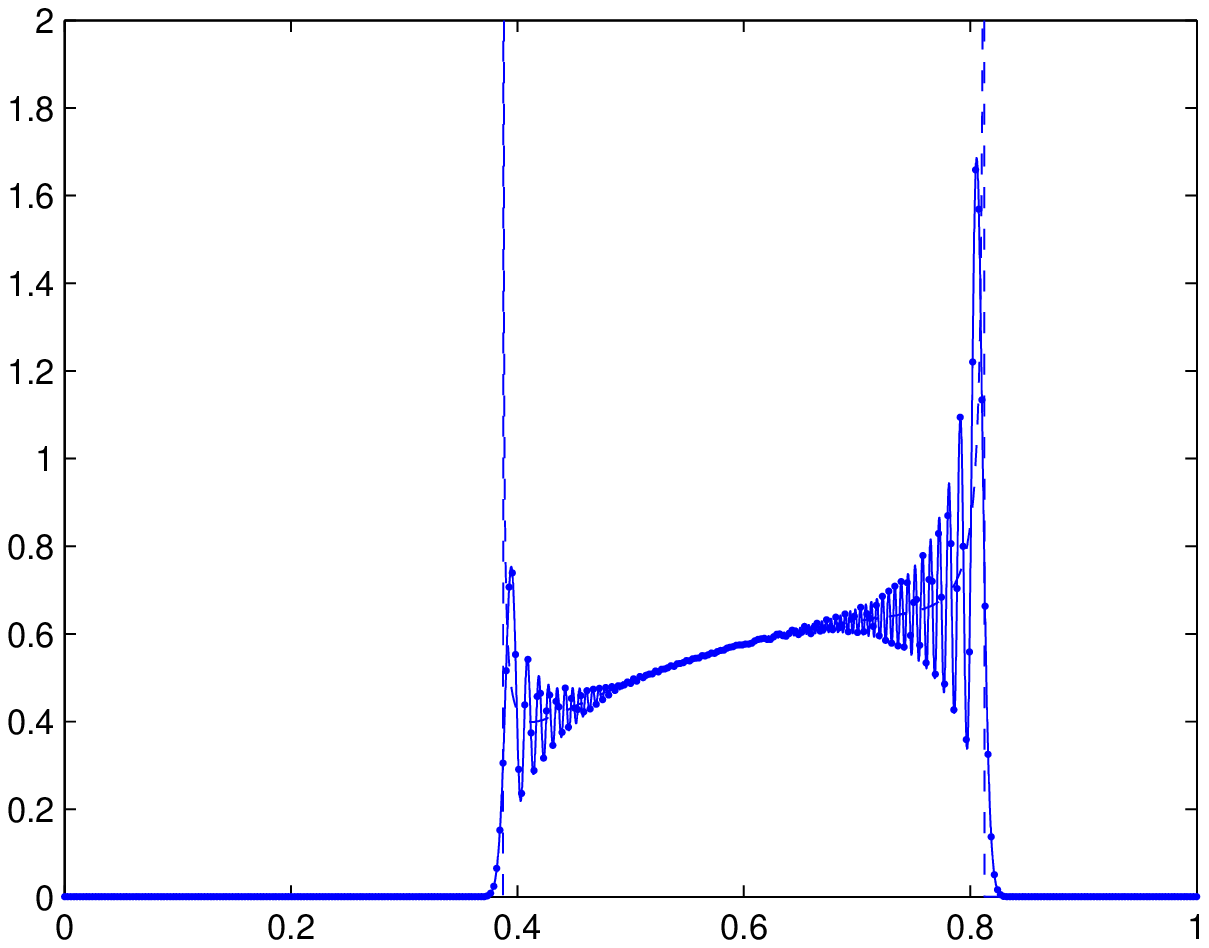}
\includegraphics[scale=0.40]{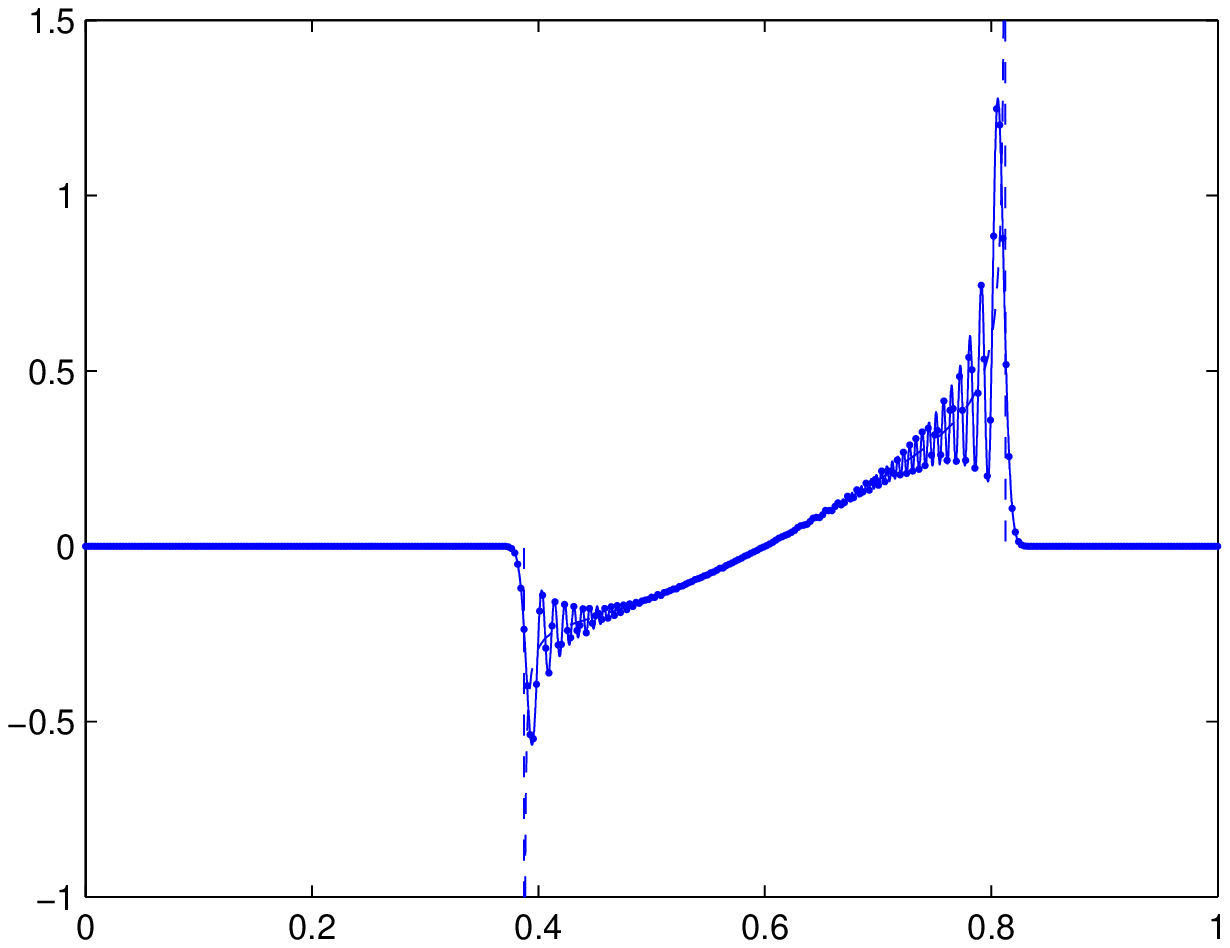}
\par \end{centering}
\caption{$\varepsilon=\frac{1}{2048}$. First row: position density and flux density of $\varphi^{\e}$; second row: position density and current density of $\psi^{\e}$. }
\end{figure}

Next, we take a more generic potential ensuring in a nontrivial coupling between the two sub-systems, namely $V(x,y)=\frac{1}{2}(x+y)^2$, i.e., a harmonic coupling. 
We again want to test whether $\varepsilon$-independent $\Delta t$ can be taken to correctly capture the behavior of physical observables. 
We solve the TDSCF equations with resolved spatial grids, which means $\Delta x=O(\varepsilon)$. The numerical solutions with $\Delta t=O(\varepsilon)$ are used as the 
reference solutions. For $\varepsilon=\frac{1}{256}$, $\frac1{512}$, $\frac1{1024}$, $\frac1{2048}$, $\frac1{4096}$, we fix $\Delta t=0.005$, and compute till $T=0.54$. 
The $l^2$ norm of the error for the 
wave functions and the error for the position densities is calculated. 
We see in Figure 7 that the former increases as $\varepsilon \rightarrow 0_{+}$, but the error in the physical observables does not change noticeably.

\begin{figure}
\begin{centering}
\includegraphics[scale=0.6]{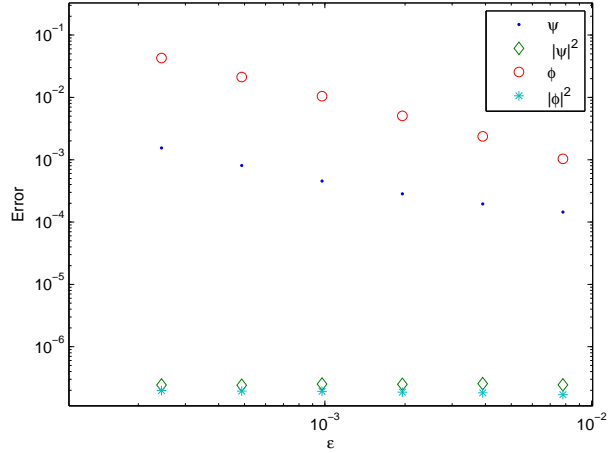} \par
\caption{Fix $\Delta$ t=0.005. For $\varepsilon=\frac{1}{256}$, $\frac1{512}$, $\frac1{1024}$, $\frac1{2048}$, $\frac1{4096}$, $\Delta x=\frac{\varepsilon}{8}$, respectively. 
The reference solution is computed with the same $\Delta x$, but $\Delta t=\frac{0.54\varepsilon}{4}$.}
\end{centering}
\end{figure}
\medskip{}

\textbf{Example 3.} In this example, we want to test the convergence in the spatial grid $\Delta x$ and in the time step $\Delta t$. According to the previous analysis, 
the spatial oscillations of wavelength $O(\e)$ need to be resolved. On the other hand, if the time oscillation with frequency $O(1/\e)$ are resolved, one gets accurate 
approximation even of the wave functions itself (not only quadratic quantities of it). Unresolved time steps of order $O(1)$ can still give correct physical observable densities.
 More specifically, one expects second order convergence with respect time in both wave functions (and in the physical observables), and spectral convergence in the 
respective spatial variable.

Assume $x,y\in[-\pi,\pi]$ and let $V(x,y)=\frac12(x+y)^2$. The initial conditions are of the WKB form,
$$
\psi^{\e}_{\rm in}(x)=e^{-5(x+0.1)^2}e^{i\sin{x}/\e}, \quad \varphi^{\e}_{\rm in}(y)=e^{-5(y-0.1)^2}e^{i\cos{y}/\varepsilon}.
$$

To test the spatial convergence, we take $\varepsilon=\frac{1}{256}$, and the reference solution is computed by well resolved mesh $\Delta x= \frac{2\pi\varepsilon}{64}$, $\Delta t=\frac{0.4 \varepsilon}{16}$ until $T=0.4$. Then, we compute with the same time step, but with difference spatial grids. The results are illustrated in Figure 8. We observe that, when $\Delta x=O(\varepsilon)$, the error decays quickly to be negligibly small as $\Delta x$ decreases. However, when the spatial grids do not well resolve the $\varepsilon$-scale, the method would actually give solutions with $O(1)$ error. 

\begin{figure}
\begin{centering}
\includegraphics[scale=0.6]{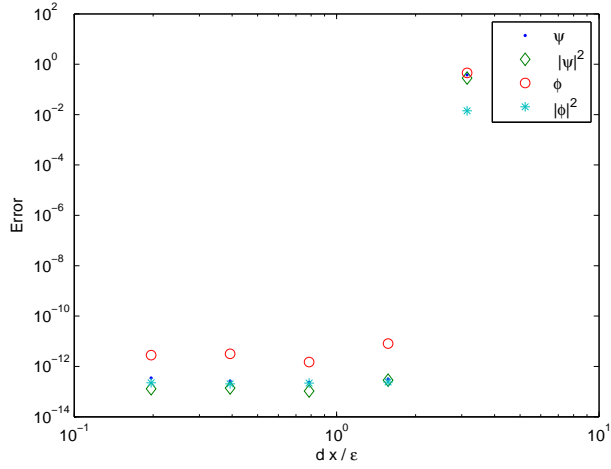} \par
\caption{Fix $\varepsilon=\frac{1}{256}$ and $\Delta t=\frac{0.4 \varepsilon}{16}$. Take $\Delta x= \frac{2\pi\varepsilon}{32}$, $\frac{2\pi\varepsilon}{16}$, 
$\frac{2\pi\varepsilon}{8}$, $\frac{2\pi\varepsilon}{4}$, $\frac{2\pi\varepsilon}{2}$ and $\frac{2\pi\varepsilon}{1}$ respectively. 
The reference solution is computed with the same $\Delta t$, but $\Delta x=\frac{2\pi\varepsilon}{64}$.}
\end{centering}
\end{figure}

At last, to test the convergence in time, we take $\varepsilon=\frac{1}{1024}$, and the reference solution is computed through a well resolved mesh with 
$\Delta x= \frac{2\pi\varepsilon}{16}$, $\Delta t=\frac{0.4}{8192}$ till $T=0.4$. 
Then, we compute with the same spatial grids, but with different time steps. 
The results are illustrated in the Figure 9. We observe that the method is stable even if $\Delta t \gg \varepsilon$. Moreover 
we get second order convergence in the wave functions as well as in the physical observable densities. 

\begin{figure}
\begin{centering}
\includegraphics[scale=0.6]{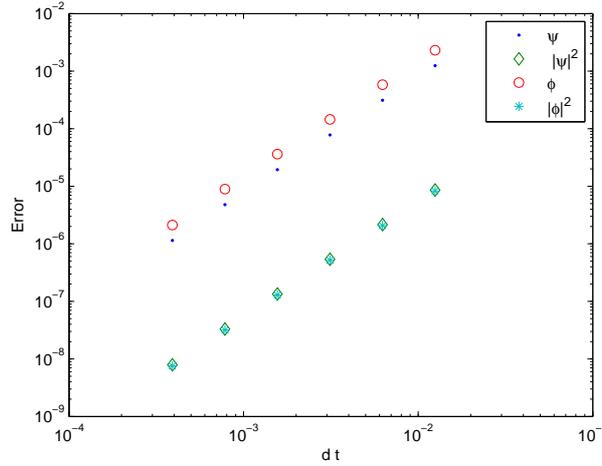} \par
\caption{Fix $\varepsilon=\frac{1}{1024}$ and $\Delta x=\frac{2 \pi}{16}$. 
Take $\Delta t= \frac{0.4}{32}$, $\frac{0.4}{64}$, $\frac{0.4}{128}$, $\frac{0.4}{256}$, $\frac{0.4}{512}$ and $\frac{0.4}{1024}$, respectively. 
The reference solution is computed with the same $\Delta x$, but $\Delta t=\frac{0.4}{8192}$.}
\end{centering}
\end{figure}

\medskip{}


\subsection{SVSP2 method for the Ehrenfest equations}

Now we solve the Ehrenfest equations (\ref{eq:ehrenfest2}) by the SVSP2 method. Assume $x\in[-\pi,\pi]$, and assume periodic boundary conditions for the electronic wave equation.
\medskip{}

\textbf{Example 4.} In this example, we want to test if $\delta$-independent time steps can be taken to capture correct physical observables and the convergence in the time step which is expected to be of the second order. The potential is again $V(x,y)=\frac{1}{2}(x+y)^2$ and the initial conditions are chosen to be
$$
\psi^{\delta}(x,0)=e^{-5(x+0.1)^2}e^{i\sin{x}/\delta}, \quad y(0)=0,\quad \eta(0)=0.1.
$$

First, we test whether $\delta$-independent $\Delta t$ can be taken to capture the correct physical observables. 
We solve the equations with resolved spatial grids, which means $\Delta x=O(\delta)$. The numerical solutions with $\Delta t=O(\delta)$ 
are used as the reference solutions. For $\delta=1/256$, $1/512$, $1/1024$, $1/2048$, $1/4096$, we fix $\Delta t=\frac{0.4}{64}$, 
and compute until $T=0.4$. The $l^2$ norm of the error in wave functions, the error in position densities, and the error in the coordinates of the nucleus are calculated. 
We see in Figure 10 that the error in the wave functions increases as $\delta \rightarrow 0_{+}$, but the errors in physical observables and in the classical coordinates do not change notably.

\begin{figure}
\begin{centering}
\includegraphics[scale=0.6]{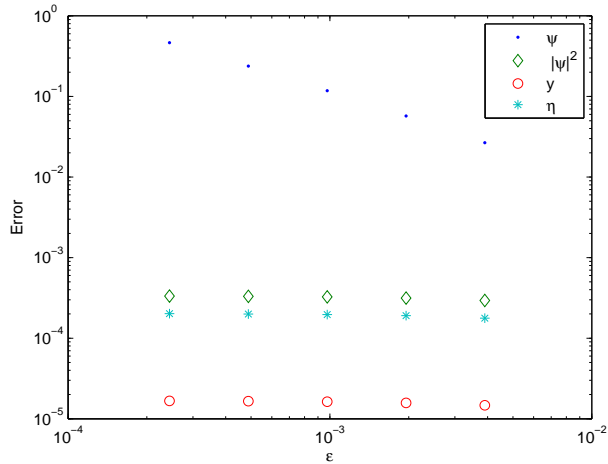} \par
\caption{Fix $\Delta t=\frac{0.4}{64}$. For $\delta=\frac1{256}$, $\frac1{512}$, $\frac1{1024}$, $\frac1{2048}$, $\frac1{4096}$, 
$\Delta x=2\pi\varepsilon/16$, respectively. The reference solution is computed with the same $\Delta x$, but $\Delta t=\frac{\delta}{10}$.}
\end{centering}
\end{figure}

Next, we test the convergences with respect to the time step in the wave function, the physical observables and the classical coordinates. 
We take $\delta=\frac{1}{1024}$, and the reference solution is computed by well resolved mesh $\Delta x= \frac{2\pi\varepsilon}{16}$, $\Delta t=\frac{0.4}{8192}$ till $T=0.4$. 
Then, we compute with the same spatial grids, but with difference time steps. The results are illustrated in the Figure 11. 
We observe that, the method is stable even if $\Delta t \gg \varepsilon$, and clearly, 
we get second order convergence in the wave functions, the physical observable densities and the classical coordinates. 

\begin{figure}
\begin{centering}
\includegraphics[scale=0.6]{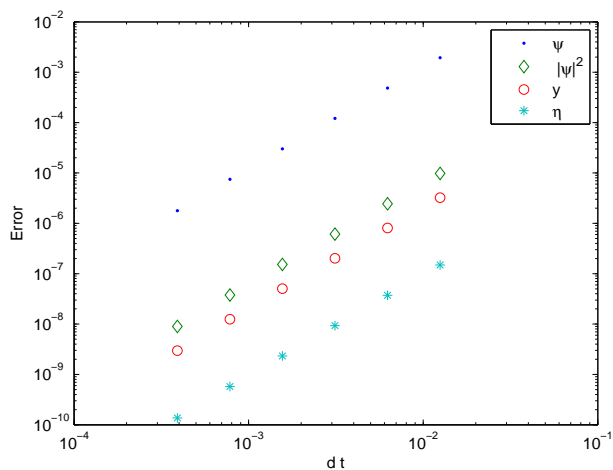} \par
\caption{Fix $\delta=\frac{1}{1024}$ and $\Delta x=\frac{2 \pi}{16}$. Take $\Delta t= \frac{0.4}{32}$, $\frac{0.4}{64}$, $\frac{0.4}{128}$, $\frac{0.4}{256}$, $\frac{0.4}{512}$ and $\frac{0.4}{1024}$, 
respectively. The reference solution is computed with the same $\Delta x$, but $\Delta t=\frac{0.4}{8192}$.}
\end{centering}
\end{figure}


\end{document}